\DeclareMathOperator{\ch}{ch}
\DeclareMathOperator{\cross}{cr}
\title{Improving the Crossing Lemma by Characterizing Dense 2-Planar and 3-Planar Graphs\footnote{This is a refined version of the paper \cite{LIPIcs:BuengenerKaufmann} that will appear in the proceedings of the 32nd International Symposium on Graph Drawing and Network Visualization, 2024.}}
\author{Aaron B{\"u}ngener}{Universit{\"a}t T{\"u}bingen}{aaron.buengener@student.uni-tuebingen.de}{}{}
\author{Michael Kaufmann}{Universit{\"a}t T{\"u}bingen}{michael.kaufmann@uni-tuebingen.de}{}{}
\authorrunning{A. B\"ungener, M. Kaufmann}
\keywords{Crossing Lemma, k-planar graphs, discharging method}
\begin{document}

\maketitle
\begin{abstract}
The classical Crossing Lemma by Ajtai et al.~and Leighton from 1982 gave an important lower bound of $c \frac{m^3}{n^2}$ for the number of crossings in any drawing of a given graph of $n$ vertices and $m$ edges. The original value was $c= 1/100$, which then has gradually been improved. Here, the bounds for the density of $k$-planar graphs played a central role. Our new insight is that for $k=2,3$ the $k$-planar graphs have substantially fewer edges if specific local configurations that occur in drawings of $k$-planar graphs of maximum density are forbidden. Therefore, we are able to derive better bounds for the crossing number $\cross(G)$ of a given graph $G$.
In particular, we achieve a bound of $\cross(G) \ge \frac{37}{9}m-\frac{155}{9}(n-2)$ for the range of $5n < m \le 6n$, while our second bound $\cross(G) \ge 5m - \frac{203}{9}(n-2)$ is even stronger for larger $m>6n$.

For $m > 6.77n$, we finally apply the standard probabilistic proof from the BOOK and obtain an improved constant of $c>1/27.48$ in the Crossing Lemma. Note that the previous constant was $1/29$.
Although this improvement is not too impressive, we
consider our technique as an important new tool, which might be helpful in various other applications. 
    
\end{abstract}

\section{Introduction}

The classical Crossing Lemma by Ajtai et al.\cite{ajtai1982crossing} and Leighton \cite{leighton1983complexity} has been considerably improved constant-wise from $\frac{1}{100}$ in many subsequent works \cite{DBLP:books/daglib/0019107, DBLP:journals/dcg/PachRTT06, DBLP:journals/combinatorica/PachT97}
and for many variants \cite{schaefer2012graph}, such as bipartite graphs \cite{DBLP:journals/corr/abs-1712-09855}, graphs of bounded girth \cite{DBLP:journals/dcg/PachST00}, multigraphs \cite{DBLP:journals/jgaa/KaufmannPTU21, DBLP:journals/dcg/PachT20}, etc.
Sz{\'{e}}kely \cite{DBLP:journals/cpc/Szekely97}
gave an impressive collection of further applications of the Crossing Lemma in discrete geometry.

The gradual improvement of the above mentioned constant has been mainly done by~using the linear  bounds for the number of edges for planar, 1-planar, 2-planar, etc. graphs. $k$-planar graphs have a drawing where each edge is crossed at most $k$ times.
Density bounds~for $k$-planar $n$-vertex graphs have been subject to intensive research in the past.
While planar graphs have at most $3n-6$ edges, the best known upper bounds for 1-planar, 2-planar and 3-planar graphs are
$4n-8$ \cite{von1983bemerkungen}, $5n-10$ \cite{DBLP:journals/combinatorica/PachT97} and $5.5n-11.5$ \cite{DBLP:conf/compgeom/Bekos0R17} respectively; for the corresponding non-simple versions the bounds might slightly differ \cite{DBLP:conf/compgeom/Bekos0R17}. They
have been directly applied for better bounds for the crossing lemma.
The current best constant of $\frac{1}{29}$ uses even the bound for 4-planar graphs \cite{DBLP:journals/comgeo/Ackerman19}, which is $6n-12$.

We will perform a more refined analysis by considering drawings that are in some sense between $k$-planar and $k+1$-planar drawings for $k=1,2$. In their paper from 2006 \cite{DBLP:journals/dcg/PachRTT06}, Pach, Radoicic, Tardos and Tóth used a similar approach to improve the corresponding constant of the Crossing Lemma. They considered the density of 1-planar drawings with a fixed number of crossing-free triangles, a class of drawings between planar and 1-planar in general.

A similar road has been taken in the paper \cite{DBLP:journals/jct/AckermanT07}
about simple quasi-planar graphs. While the general density bound here is $6.5n$, the authors consider drawings without triangular cells that have no vertex on the boundary. For such a more general class, a bound of $7n$ can be derived. This bound has not been applied for the Crossing Lemma, though.
We will apply such a refined look to 2- and 3-planar drawings:
It turns out that either we can prove much smaller bounds for the edge density  than provided by the upper bounds~of the corresponding $k$-planar classes (which is per se good for the Crossing Lemma) or we can characterize the drawing in a very good way, which simplifies the way of counting the~ crossings.

The idea has been motivated by some results in the literature.
(Non-simple) optimal 2-planar and 3-planar graphs have been characterized \cite{DBLP:conf/compgeom/Bekos0R17}, and there is very limited flexibility for the structure of such graphs. We know that with much less restrictions on the drawings, the limits of the maximum density for some superclasses for 1-planar and 2-planar graphs are still roughly at the same value. Examples for this effect are the min-1-planar and min-2-planar graphs \cite{DBLP:conf/gd/BinucciBBDDHKLMT23} as superclasses of 1-planar and 2-planar graphs, as well as gap-planar graphs as a superclass of 2-planar graphs \cite{DBLP:journals/tcs/BaeBCEE0HKMRT18}.

To use the concept of $k$-planarity for various values of $k$, we planned to specify at which point between $k$- and $k+1$-planarity the density is changing. This turned out to be difficult, and hence we go the other way around and forbid local configurations that have to occur in optimal $k$-planar drawings. That leads to nice insights on the density bounds and surprising results.
Note that all our results hold for non-simple graphs and non-simple drawings.

We remark that the current version is a refinement of our paper \cite{LIPIcs:BuengenerKaufmann}. We were able to strengthen a crucial lemma \cref{pro:m_4andm_3} and further improve the bound for the Crossing Lemma.

\section{Definitions and Notation}

A \emph{drawing} or \emph{topological graph} $D$ is a graph drawn in the plane such that the vertices are pairwise distinct points and the edges are represented as Jordan arcs connecting the corresponding endpoints. We assume simplicity in the sense that edges do not overlap other vertices in the interior. Two edges might cross, but we do not allow that more than two edges cross at a single point. We also assume that two edges have only a finite number of common interior points and no two edges meet tangentially.
Remark that we will consider not necessarily simple drawings, i.e., we will allow non-homotopic multiple edges as well as adjacent crossing edges, while loops are forbidden. Since we mostly assume that the number of crossings will be minimal, there will be no empty lenses, i.e., empty regions having a boundary that is being defined by two edges; c.f. \cref{pro:charging-helper}.

The \emph{crossing number} $\cross(D)$ is defined to be the total number of crossing points in $D$. For an abstract graph $G$, the \emph{crossing number} $\cross(G)$ is the minimum value of $\cross(D)$ over all drawings $D$ with $D$ is a drawing of $G$. A drawing $D$ is \emph{$k$-planar} if no edge is crossed more than $k$ times. A graph $G$ is \emph{$k$-planar} if it has a $k$-planar drawing.

\textbf{Forbidden configurations.}
We now define three forbidden configurations that play a key role:
A \emph{full $k$-planar $p$-gon} $F^k_p$
can be described by a $p$-cycle $C_p$ of planar edges with no other vertices inside, which is then greedily extended by a maximal number of edges to be placed inside that are as short as possible observing this subgraph is still $k$-planar. To finally arrive at $F^k_p$, we delete the planar cycle $C_p$ at the boundary.
In this way, we define
a \emph{full 2-planar pentagon} $F^2_5$ to be the graph $K_5 - C_5$ drawn in the way described above (see \cref{fig:forbidden-configuration-a}).
Similarly, we can define full 2-planar hexagons $F^2_6$ and full 3-planar hexagons $F^3_6$ as specific drawings of subgraphs of $K_6-C_6$. More precisely, a \emph{full 2-planar hexagon} consists of the six short, i.e., 2-hop edges inside a planar $C_6$ (see \cref{fig:forbidden-configuration-b}). A \emph{full 3-planar hexagon} consists of all possible 2-hop and two 3-hop edges inside a planar $C_6$ (see \cref{fig:forbidden-configuration-c}).

Clearly, a configuration $F^k_p$ may be crossed by some other edges. But for full 2-planar~penta\-gons and full 2-planar hexagons, this cannot happen in the case of 2-planar drawings, which motivates to define the planar 5-cycle resp. 6-cycle surrounding them as their \emph{boundary}~(even if not all of its edges may exist in a drawing). This implies that, for 2-planar drawings, full 2-planar pentagons and hexagons are edge-disjoint (while they may have common boundary edges). Similarly, in the case of 3-planarity and full 3-planar hexagons, the cycle surrounding them consists of uncrossed edges if there are~no empty lenses. With this in mind, we analogously define the \emph{boundary} of a full 3-planar hexagon, and observe that these configurations are edge-disjoint for 3-planar drawings.

Using the definitions above, we are able to state our main results in the next section.
\begin{figure}
    \begin{subfigure}[b]{0.27\linewidth}
    \center
        \includegraphics[page=1, width = \linewidth]{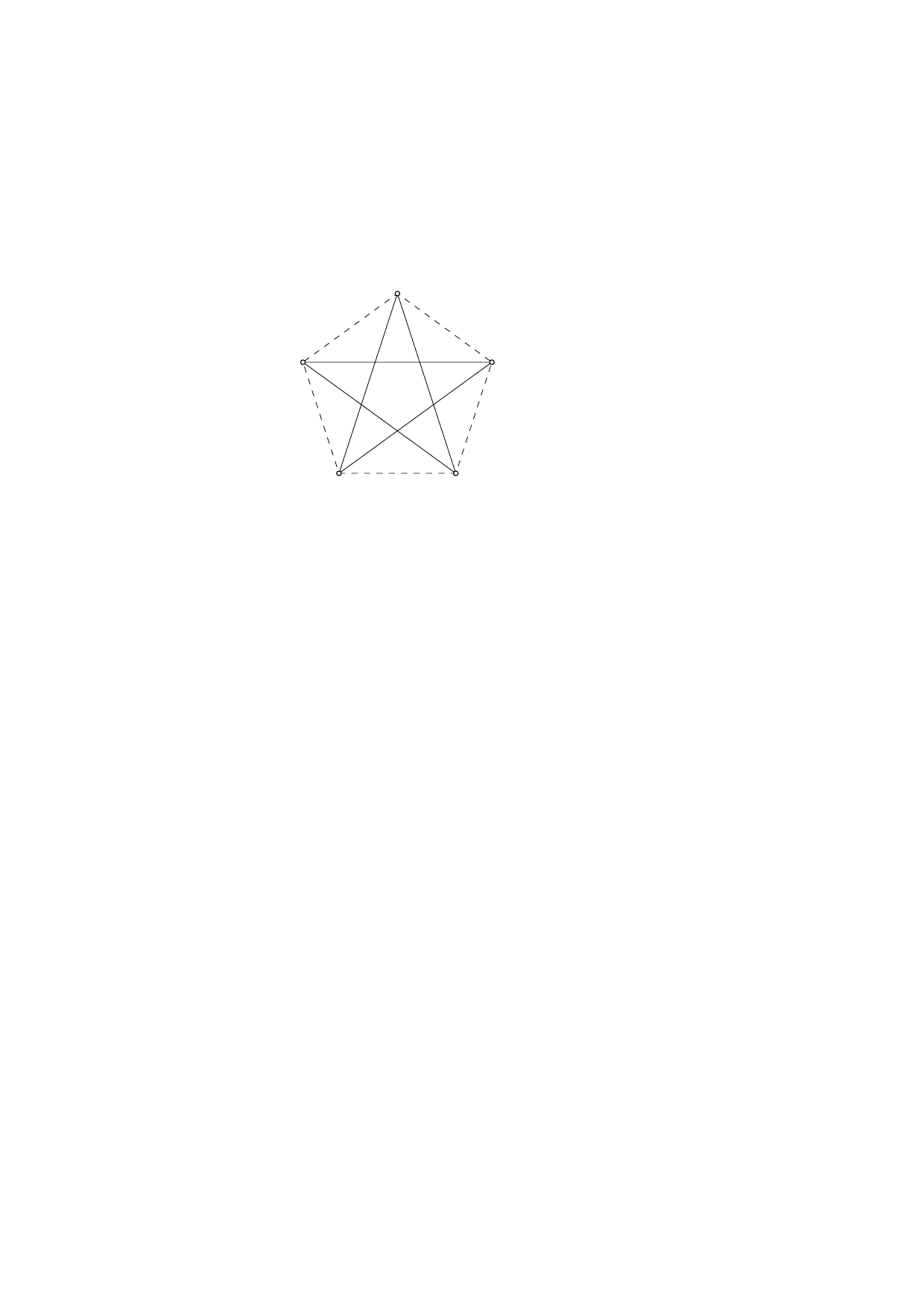}
        \subcaption{}
        \label{fig:forbidden-configuration-a}
    \end{subfigure}  
    \hfill
    \begin{subfigure}[b]{0.27\linewidth}
    \center
        \includegraphics[page=2, width = \linewidth]{definitions.pdf}
        \subcaption{}
        \label{fig:forbidden-configuration-b}
    \end{subfigure} 
    \hfill
    \begin{subfigure}[b]{0.27\linewidth}
    \center
        \includegraphics[page=3, width = \linewidth]{definitions.pdf}
        \subcaption{}
        \label{fig:forbidden-configuration-c}
    \end{subfigure} 
    \hfill
        \caption{(a) A full 2-planar pentagon $F^2_5$, (b) a full 2-planar hexagon $F^2_6$ and (c) a full 3-planar hexagon $F_3^6$ with their boundaries (dashed).
        }
        \label{fig:forbidden-configuration}
\end{figure}

\section{Results}
In this section, we present our results. The proofs of \cref{th:density-2-planar-forbidden-configuration} and \cref{th:density-3-planar-forbidden-configuration} use the discharging method and can be found in \cref{sec:charging}.

\begin{restatable}{theorem}{thdensitytwoplanar}\label{th:density-2-planar-forbidden-configuration} 
Any graph $G$ with $n\ge 3$ vertices that admits a 2-planar $F^2_5$-free drawing has at most $4.5(n-2)$ edges. If the drawing is also $F^2_6$-free, then $G$ has at most $\frac{13}{3}(n-2)$ edges.
\end{restatable}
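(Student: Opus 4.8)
The plan is to use the discharging method. I would start with a 2-planar drawing $D$ of $G$ that is $F^2_5$-free and has the minimum number of crossings among all such drawings, so that (by the remark before \cref{pro:charging-helper}) there are no empty lenses. The natural planarization trick is awkward for $k$-planar density bounds, so instead I would work directly with the faces (cells) of $D$ and set up charges on vertices and faces. A clean choice: assign to each vertex $v$ a charge of $\deg(v)-4$ and to each face $f$ of $D$ a charge of $|f|-4$, where $|f|$ counts the boundary incidences (vertices and crossings). Since each crossing contributes $4$ to the sum of face sizes and $0$ elsewhere, Euler's formula for the planarization yields a total charge of $-8$ (independent of the number of crossings). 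To conclude $m \le 4.5(n-2)$, it suffices to redistribute charge so that every face ends with nonnegative charge and every vertex ends with charge at least $\deg(v)-4-\tfrac{1}{?}$ ... more precisely, I want the final bound to read as $\sum_v (\deg(v) - c) \ge -8$ for the right constant $c$; so I would instead weight things to target the slope $4.5$, e.g. charge $v$ by $\deg(v) - 4$ and arrange that the deficit at triangular and other small faces is paid for, ending up with $m \le \tfrac92(n-2)$ after accounting. Getting the exact constant out of the bookkeeping is a matter of choosing the vertex/face weights consistently; the real content is the discharging rules.

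The discharging rules must send charge from large faces and high-degree vertices into the small faces — principally triangles, which come in several types depending on how many of their corners are crossings (a "true" triangle with three vertices, versus faces with one, two, or three crossing-corners). The standard facts I would invoke: in a crossing-minimal 2-planar drawing there are no empty lenses, so no face is bounded by just two edge-segments between the same pair of crossings; and a pair of crossing edges together with the drawing forces certain neighboring faces to be large. The key local analysis is to show that a triangle all of whose sides are crossed segments — the configurations that let 2-planar graphs reach $5n-10$ — cannot be "too clustered": a maximal cluster of such triangles, glued along crossing edges, essentially reconstructs (a piece of) a full 2-planar pentagon $F^2_5$. Forbidding $F^2_5$ therefore bounds how densely these charge-hungry triangles can pack, which is exactly what buys the improvement from $5$ down to $4.5$. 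For the second statement, forbidding $F^2_6$ in addition rules out the next densest packing pattern (the full 2-planar hexagon), squeezing the constant further to $\tfrac{13}{3}$; the discharging rules get refined so that the remaining admissible local patterns each pay for themselves at the $\tfrac{13}{3}$ rate.

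Concretely I would proceed in these steps: (1) Fix a crossing-minimal $F^2_5$-free 2-planar drawing and record the no-empty-lens and no-homotopic-multiedge consequences; (2) classify faces by size and by number of crossing-corners, and classify the "dangerous" triangles (those that could cause a deficit); (3) prove a structural lemma that a sufficiently large connected arrangement of dangerous triangles contains $F^2_5$ as a subdrawing (with its boundary), hence is forbidden — this is the crux; (4) state discharging rules moving fixed amounts of charge from $(\ge 5)$-faces and from vertices of degree $\ge 5$ toward adjacent dangerous triangles, the rates calibrated to the $4.5$ slope; (5) verify via case analysis over local configurations that every face and vertex ends nonnegative (after the appropriate affine shift), and read off $m \le \tfrac92(n-2)$; (6) for the $F^2_6$-free case, extend the structural lemma to also exclude the hexagonal pattern and re-tune the rules to get $\tfrac{13}{3}(n-2)$.

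The main obstacle I expect is step (3) together with the case analysis in step (5): one must show that wherever the drawing locally looks "as dense as an optimal 2-planar drawing" it actually contains a full $F^2_5$ (respectively $F^2_6$), and this requires carefully tracking which edges are crossed how many times and which planar cycle surrounds the cluster — the subtlety being that the surrounding cycle need not be fully present as edges, only as a curve, so "containing $F^2_5$" must be read in the precise sense of the definition in \cref{fig:forbidden-configuration}. Handling non-simple drawings (adjacent crossings, non-homotopic parallel edges) adds extra cases that need to be dispatched using the empty-lens argument. Once the structural lemma pins down the finitely many admissible dense local patterns, the discharging verification is routine but lengthy.
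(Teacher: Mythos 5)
Your overall plan --- discharging on the planarization of a crossing-minimal, edge-maximal drawing, with the forbidden configurations used to exclude the densest local patterns --- is the same as the paper's, and your charging scheme (charge $\deg(v)-4$ on vertices and $\vert f\vert -4$ on faces, total $-8$) is an equivalent reformulation of the paper's face-only charge $\ch(f)=\vert f\vert +\vert V(f)\vert -4$ with total $4n-8$ and target $\ch'(f)\ge\alpha\vert V(f)\vert$. However, the proposal stops exactly where the proof begins. Steps (3)--(5) are named but not carried out, and they are the entire content of the theorem: without explicit transfer rules and a face-by-face verification there is no way to check that the constants $\tfrac92$ and $\tfrac{13}{3}$ actually come out. ``Routine but lengthy'' is not a safe description here: the paper's verification needs several non-obvious auxiliary moves (2-quadrilaterals forwarding their excess to 0-pentagon wedge-neighbors, 2-triangles redistributing excess inside the minimal enclosing planar cycle to rescue 1-quadrilaterals that feed two 1-triangles), and finding rates that close all cases simultaneously is where the work lies.

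More importantly, the structural claim at the heart of your step (3) is not the right one. The charge-hungry faces of the planarization are not ``clusters of triangles all of whose sides are crossed segments glued along crossing edges'': a 0-triangle is cheap to satisfy (its three wedge-neighbors pay $\tfrac13$ each). The faces that can go negative are 0-pentagons and 0-hexagons, which may have to feed up to $\vert f\vert$ wedge-neighbors that are 1-triangles; the forbidden configurations enter at exactly one point each, namely that a 0-pentagon all five of whose wedge-neighbors are 1-triangles \emph{is} an $F^2_5$, and a 0-hexagon with six 1-triangle wedge-neighbors \emph{is} an $F^2_6$. So the needed ``structural lemma'' is this immediate local observation rather than a statement about maximal clusters of fully-crossed triangles --- but conversely every other face type (1-quadrilaterals feeding two 1-triangles, 0-pentagons with a 2-quadrilateral wedge-neighbor, faces with $\vert f\vert\ge 7$, \dots) still requires individual treatment, and the rates must be tuned so that, e.g., a 0-hexagon with six 1-triangle wedge-neighbors ends with exactly zero charge at rate $\tfrac49$ (this is what caps the first bound at $\tfrac92$) while forbidding it is what permits the improvement to $\tfrac{6}{13}$. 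You also omit the reductions the case analysis silently relies on: edge-maximality (so that any two real vertices on a common face are joined by a boundary edge, which is what pins down the possible wedge-neighbor types, cf.\ \cref{pro:charging-helper}), the 2-connectivity reduction, and the induction removing vertices of degree at most four. As it stands the proposal is a plan consistent with the paper's proof, not a proof.
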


Counting the number of edges in a drawing consisting of $0.5(n-2)$ full 2-planar hexagons, we see that the first of the two bounds is tight. For the second bound, we refer to a pentagonalization of the plane, where four edges have been added within each pentagon.
 
\begin{corollary}\label{cor:characterization-dense-2-planar}
For every 2-planar drawing of any graph with $n\ge 3$ vertices and $\frac{13}{3}(n-2)+x$ edges for $x \in [0,\frac{2}{3}(n-2)]$, the number of $F^2_5$ and $F^2_6$ configurations is at least $x$.
\end{corollary}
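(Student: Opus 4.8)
The plan is to reduce the corollary directly to the second bound of \cref{th:density-2-planar-forbidden-configuration} by deleting, from a maximally dense drawing, a small set of edges whose removal destroys every forbidden configuration at once.

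First I would fix an arbitrary 2-planar drawing $D$ of a graph $G$ with $n\ge 3$ vertices and $m=\frac{13}{3}(n-2)+x$ edges, and let $f$ be the total number of $F^2_5$ and $F^2_6$ configurations occurring in $D$. Recall from the discussion after the definitions that in a 2-planar drawing these configurations are pairwise edge-disjoint: their surrounding cycles (which are not part of the configurations themselves) may share boundary edges, but the internal edges — the five pentagram edges of an $F^2_5$, the six $2$-hop edges of an $F^2_6$ — are all distinct across configurations. Hence I can select one internal edge from each of the $f$ configurations, obtaining a set $S$ of $f$ pairwise distinct edges.

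Next I would pass to the drawing $D'=D-S$ of the graph $G'=G-S$ on the same $n$ vertices, which has $m-f$ edges. $D'$ is still 2-planar, since deleting edges cannot increase any remaining edge's crossing count. I claim $D'$ is simultaneously $F^2_5$-free and $F^2_6$-free. On the one hand, every configuration present in $D$ has lost one of its internal edges and therefore does not survive in $D'$; on the other hand, deleting edges cannot create a new $F^2_5$ or $F^2_6$, because each such configuration is a fixed set of five (resp.\ six) arcs drawn in a prescribed way, and the drawing of the surviving arcs is unaffected by removing other edges. Thus $G'$ admits a 2-planar drawing that is both $F^2_5$-free and $F^2_6$-free, so by \cref{th:density-2-planar-forbidden-configuration} it has at most $\frac{13}{3}(n-2)$ edges, i.e.\ $m-f\le \frac{13}{3}(n-2)$, which rearranges to $f\ge m-\frac{13}{3}(n-2)=x$.

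The hypothesis $x\le \frac{2}{3}(n-2)$ is not actually needed in the argument; it merely records the range in which a 2-planar drawing with $m$ edges can exist at all, since $\frac{13}{3}(n-2)+\frac{2}{3}(n-2)=5n-10$. The only point that requires genuine care is the twofold claim in the previous paragraph — that deleting the chosen internal edges destroys every configuration while creating none — which rests on the configurations being minimal (each needs all of its edges) and on their defining property depending only on how their own arcs are drawn, not on the surrounding edges. Edge-disjointness is what guarantees $|S|=f$ rather than something smaller, though one notes that even $|S|\le f$ would already suffice for the final inequality.
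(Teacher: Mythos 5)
Your proof is correct and follows essentially the same route as the paper's: remove one (internal) edge from each configuration, observe the resulting drawing is 2-planar, $F^2_5$-free and $F^2_6$-free, and apply the second bound of \cref{th:density-2-planar-forbidden-configuration} to the remaining $m-f$ edges. You are in fact slightly more careful than the paper in justifying edge-disjointness and that no new configurations arise after deletion, but the argument is the same.
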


Note that $G$ cannot be 2-planar for $x > \frac{2}{3}(n-2)$ by the corresponding density bound.

\begin{proof}
Assume that drawing $D$ is a 2-planar drawing of a graph with $n$ vertices and $\frac{13}{3}(n-2)+x$ edges such that the number of $F^2_5$ or $F^2_6$ configurations is $y<x$. We can destroy those configurations by removing one edge from each $F_5^2$ and $F_6^2$. Hence, we still have more than $\frac{13}{3}(n-2)$ edges, which is a contradiction to \cref{th:density-2-planar-forbidden-configuration}.
\end{proof}

This implies that drawings of optimal 2-planar graphs consist of $\frac{2}{3}(n-2)$ full 2-planar pentagons, a fact that has been already known \cite{DBLP:conf/compgeom/Bekos0R17}. Similar results hold for 3-planar drawings.

\begin{restatable}{theorem}{thdensitythreeplanar}\label{th:density-3-planar-forbidden-configuration}
Any graph with $n\ge 3$ vertices that admits a 3-planar $F^3_6$-free drawing has at most $5(n-2)$ edges.
\end{restatable}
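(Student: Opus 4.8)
The plan is to run a discharging argument on the planarization of an optimal drawing. Fix a $3$-planar, $F^3_6$-free drawing $D$ of $G$ with the smallest possible number of crossings; by \cref{pro:charging-helper} it then has no empty lens, and after deleting components of size $<3$ we may assume $D$ is connected (the remaining small and disconnected cases are routine). Let $D^\times$ be the planarization of $D$, i.e.\ the plane multigraph in which every crossing is replaced by a new degree-$4$ vertex; writing $c=\cross(D)$ it has $n+c$ vertices, $m+2c$ edges and, by Euler's formula, $2-n+m+c$ faces. Since there is no empty lens and $D$ is crossing-minimal, $D^\times$ has no face of length $2$, so every face has length at least $3$. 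Assign to each vertex $v$ the charge $\deg_{D^\times}(v)-4$ and to each face $f$ the charge $\ell(f)-4$. A direct computation from Euler's formula shows that the total charge is exactly $-8$; moreover every crossing vertex carries charge $0$, and a true vertex $v$ carries charge $\deg_G(v)-4$.

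Next I would redistribute the charge by local rules so that afterwards (i) every crossing vertex and every face has non-negative charge, and (ii) every true vertex $v$ retains charge at least $\tfrac{2}{5}\deg_G(v)-4$ — equivalently, every true vertex gives away at most $\tfrac{3}{5}$ of its degree. Since discharging conserves the total charge, this yields
\[
-8 \;=\; \sum(\text{final charge}) \;\ge\; \sum_{v\text{ true}}\Bigl(\tfrac{2}{5}\deg_G(v)-4\Bigr) \;=\; \tfrac{4}{5}m - 4n ,
\]
hence $\tfrac{4}{5}m\le 4(n-2)$ and $m\le 5(n-2)$, as desired. The rules push charge toward the faces that start out deficient, namely the triangular faces (charge $-1$): a true vertex of degree $\ge 5$ donates along each incident edge to the neighbouring triangular faces, a face of length $\ge 5$ donates across its edges, and crossing vertices merely pass received charge through to the opposite side. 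A triangular face with two or three true vertices on its boundary is then comfortably within budget; the difficulty sits entirely in triangular faces incident to at most one true vertex — in particular the \emph{empty} triangles, all three of whose vertices are crossings — which must be supplied almost entirely from neighbouring larger faces.

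The crux, and the place where the forbidden configuration enters, is to bound how badly such deficient faces can accumulate; this is exactly (the strengthened) \cref{pro:m_4andm_3}, which controls the numbers and the local arrangement of the triangular and quadrilateral faces of $D^\times$. Three-planarity is essential here: the four segments meeting a crossing come from two edges crossed at most three times each, so each of them carries at most two further crossings, which caps the local complexity of any region of $D^\times$ all of whose vertices are crossings. One then shows that a maximal such region dense enough to defeat the discharging — essentially a block of empty small faces packed around crossings at maximum density — becomes, once one re-adds the cycle of uncrossed edges surrounding it (uncrossed precisely because there is no empty lens), a full $3$-planar hexagon $F^3_6$. Forbidding $F^3_6$ therefore forces every such block to stay small enough that the true vertices on its boundary, together with the length-$\ge 5$ faces it touches, can absorb its deficit within the $\tfrac{3}{5}$-per-degree budget. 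Carrying out this case distinction exhaustively, and tuning the donation amounts so that the constant comes out at $\tfrac{2}{5}$ (hence $5(n-2)$) rather than something weaker, is the main obstacle — and it is exactly what the strengthening of \cref{pro:m_4andm_3} over the conference version buys.

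Finally, the constant is best possible for infinitely many $n$: an optimal $2$-planar graph has exactly $5(n-2)$ edges and, being built from $\tfrac{2}{3}(n-2)$ full $2$-planar pentagons (a pentangulation with all five diagonals drawn as a pentagram inside each pentagon, each diagonal crossed twice), is $2$-planar, hence $3$-planar, and has no hexagonal cell and no six uncrossed edges bounding a vertex-free region, so it contains no $F^3_6$. This is the exact analogue of the $\tfrac{13}{3}(n-2)$ construction mentioned above, now with all five diagonals rather than four, since here $F^2_5$ is permitted.
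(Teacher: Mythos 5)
Your overall framework is sound: the vertex-and-face charging with totals summing to $-8$, the target of letting each true vertex keep $\tfrac{2}{5}\deg_G(v)-4$, and the resulting arithmetic $m\le 5(n-2)$ are all correct, and they are an equivalent reformulation of the paper's scheme (the paper folds the vertex charges into the faces via $\ch(f)=\vert f\vert+\vert V(f)\vert-4$ and aims for $\ch'(f)\ge 0.4\vert V(f)\vert$). The tightness example via optimal 2-planar graphs also matches the paper. However, what you have written is a plan, not a proof: the entire combinatorial content of the theorem lies in specifying the discharging rules exactly and verifying, face type by face type, that they work — and you explicitly defer this ("carrying out this case distinction exhaustively \dots is the main obstacle"). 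In the paper this takes seven discharging steps and Propositions \ref{pro:steps-a}--\ref{pro:steps-i}; the genuinely hard cases are not generic "empty triangles" but 0-pentagons that are wedge-neighbors of four or five 1-triangles, and the $F^3_6$-freeness is invoked at one very specific point (in Case 3.3 of \cref{pro:steps-i}, a 0-pentagon all five of whose wedge-neighbors are 1-triangles, adjacent across a crossing to another such 0-pentagon, would constitute an $F^3_6$). Your heuristic that "a maximal dense block of empty small faces becomes an $F^3_6$" is the right intuition but is nowhere near a proof; in particular, many near-critical configurations (0-heptagons, chains of 0-quadrilaterals, 1-quadrilaterals feeding two 1-triangles, etc.) survive the $F^3_6$ prohibition and must be discharged by second- and third-order transfers (the paper's \textsf{Steps 5--7}), which your rules ("crossings pass charge through", "large faces donate across edges") do not capture.

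A second, concrete error: you identify \cref{pro:m_4andm_3} as the key lemma controlling the deficient faces. That proposition has nothing to do with this theorem — it bounds the number $m_4$ of deleted four-crossing edges in the proof of \cref{th:linear-bounds}, and its strengthening over the conference version improves the Crossing Lemma constant, not the density bound here. The density theorem is proved self-containedly by the discharging in \cref{sec:charging}; leaning on \cref{pro:m_4andm_3} would in fact be circular in spirit, since the linear bounds are derived \emph{from} \cref{th:density-3-planar-forbidden-configuration}.
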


This bound is tight, which one can see by considering optimal 2-planar graphs.

The next corollary allows us to characterize drawings of dense 3-planar graphs very well. This extends the characterization of optimal 3-planar graphs, which must have a drawing consisting of $\frac{1}{2}(n-2)$ $F^3_6$ configurations and their boundaries \cite{DBLP:conf/compgeom/Bekos0R17}.

\begin{corollary}\label{cor:characterization-dense-3-planar}
For ervery 3-planar drawing of any graph with $n\ge 3$ vertices and $5(n-2)+x$ edges for $x \in [0,0.5(n-2)]$, the number of $F^3_6$ configurations is at least $x$.
\end{corollary}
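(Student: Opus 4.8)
The plan is to mimic exactly the proof of \cref{cor:characterization-dense-2-planar}, but now invoking \cref{th:density-3-planar-forbidden-configuration} as the density bound for $F^3_6$-free drawings. First I would argue by contradiction: suppose $D$ is a $3$-planar drawing of an $n$-vertex graph $G$ with $5(n-2)+x$ edges, where $x\in[0,0.5(n-2)]$, but the number $y$ of $F^3_6$ configurations occurring in $D$ satisfies $y<x$.

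Next I would use the structural fact, noted in the excerpt just before the statement of \cref{th:density-3-planar-forbidden-configuration}, that in a $3$-planar drawing the full $3$-planar hexagons are edge-disjoint (their boundaries consist of uncrossed edges when there are no empty lenses, which we may assume since the crossing number is minimized). Edge-disjointness is the key point: it lets me pick, for each of the $y$ occurring $F^3_6$ configurations, one distinct edge lying inside it and delete all $y$ of these edges simultaneously. The resulting drawing $D'$ of a graph $G'$ is still $3$-planar, is now $F^3_6$-free by construction, and has $5(n-2)+x-y > 5(n-2)$ edges since $y<x$. This contradicts \cref{th:density-3-planar-forbidden-configuration}, so $y\ge x$, proving the corollary.

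I would add the remark (paralleling the note after \cref{cor:characterization-dense-2-planar}) that the hypothesis $x\le 0.5(n-2)$ is forced: a $3$-planar graph has at most $5.5n-11.5 = 5(n-2) + 0.5(n-2) + \tfrac12$ edges, so values of $x$ beyond roughly $0.5(n-2)$ cannot be realized by a $3$-planar graph anyway, and in this regime the corollary additionally recovers the known characterization of optimal $3$-planar graphs as being built from $\tfrac12(n-2)$ copies of $F^3_6$ with their boundaries.

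The main obstacle — and the only genuinely non-routine point — is making sure the deletion of one edge per configuration is legitimate, i.e. that the $F^3_6$ configurations are pairwise edge-disjoint so that deleting $y$ edges really destroys all $y$ of them at once and leaves the count at exactly $5(n-2)+x-y$. This rests entirely on the edge-disjointness claim established in the "Forbidden configurations" paragraph, which in turn uses that the surrounding $6$-cycle consists of uncrossed edges in the absence of empty lenses; I would cite that discussion (and, if needed, \cref{pro:charging-helper} guaranteeing no empty lenses under crossing-minimality) rather than reprove it. Everything else is the same two-line contradiction as in \cref{cor:characterization-dense-2-planar}.
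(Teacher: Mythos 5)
Your proposal is correct and follows essentially the same route as the paper's own proof: assume $y<x$ configurations, delete one edge from each (legitimate by the edge-disjointness of $F^3_6$ configurations noted in the definitions section), and contradict \cref{th:density-3-planar-forbidden-configuration}. Your added care about edge-disjointness and the range of $x$ only makes explicit what the paper leaves implicit.
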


Note that $G$ cannot be 3-planar for $x > 0.5(n-2)$ by the corresponding density bound.

\begin{proof}
    Analogously to the proof of \cref{cor:characterization-dense-2-planar}, 
    we assume that there is a 3-planar drawing $D$ of a graph with $n$ vertices and $5(n-2)+x$ edges such that the number of $F^2_5$ or $F^2_6$ configurations is $y<x$. Those configurations can be destroyed by removing one edge~from each $F^3_6$, hence we still have more than $5(n-2)$ edges, which is a contradiction to \cref{th:density-3-planar-forbidden-configuration}.
\end{proof}

A consequence of this is a new upper bound for the edge density of simple 3-planar graphs, i.e., the case where multi-edges are forbidden. Note that the best known bound before was $5.5n-11.5$ edges \cite{DBLP:conf/compgeom/Bekos0R17} and there exist examples with $5.5n-15$ edges \cite{DBLP:journals/dcg/PachRTT06}.

\begin{corollary}\label{cor:density-3-planar}
There are no 3-planar graphs on $n\ge 3$ vertices with $5.5n-11.5$ edges. Therefore, any simple 3-planar graph on $n\ge 3$ vertices has at most $5.5n-12$ edges.
\end{corollary}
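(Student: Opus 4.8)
The plan is to derive the first statement from \cref{cor:characterization-dense-3-planar} by a planarity argument on the boundaries of the forced $F^3_6$ configurations, and then to obtain the density bound for simple graphs by a parity argument. I first reduce to the relevant case: if $n$ is even, then $5.5n-11.5$ is not an integer and no graph has that many edges, so there is nothing to prove; hence assume $n$ is odd, so that $m_0:=5.5n-11.5$ is an integer. Suppose, for contradiction, that a graph $G$ on $n$ vertices with exactly $m_0$ edges admits a 3-planar drawing, and fix one, $D$, realizing $\cross(G)$; by \cref{pro:charging-helper} it has no empty lenses. Writing $m_0=5(n-2)+x$ gives $x=\tfrac12(n-2)-\tfrac12\in[0,\tfrac12(n-2)]$, so \cref{cor:characterization-dense-3-planar} produces pairwise edge-disjoint $F^3_6$ configurations $F_1,\dots,F_h$ in $D$ with $h\ge\tfrac{n-3}{2}$.

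Next I would pass to the plane graph $B$ formed by the union of the boundary $6$-cycles of $F_1,\dots,F_h$; since $D$ has no empty lenses, every such boundary edge is uncrossed, so $B$ is a plane subgraph of $D$ with no loops and no face bounded by only two edges. I would check that the hexagonal region of each $F_i$ contains no vertex of $D$, cannot be entered by any (uncrossed) edge of $B$, and cannot simultaneously be the bounding hexagon of a second $F_j$ (two edge-disjoint copies of $F^3_6$ in one hexagon would exceed what $3$-planarity allows inside a hexagon); hence the $h$ bounding hexagons are pairwise distinct faces of $B$. Triangulating each of these hexagonal faces with the three $2$-hop edges of the corresponding $F_i$ that form a triangle inside it yields a plane graph $B'$ on at most $n$ vertices with $|E(B')|=|E(B)|+3h$, so Euler's formula gives $|E(B)|+3h\le 3n-6$. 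Counting incidences between the hexagonal faces of $B$ and its edges (each edge borders at most two faces) together with this inequality forces $h\le\tfrac{n-2}{2}$, hence $h=\tfrac{n-3}{2}$ and $|E(B)|\le\tfrac{3(n-1)}{2}$; pushing the same inequalities a little further rules out the degenerate case in which some vertex of $D$ lies outside $B$.

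The final step is an edge count. Every edge of $D$ is either an edge of $B$, one of the eight interior edges of some $F_i$, or an edge contained in one of the remaining faces of $B$; writing $q$ for the number of edges of the last kind, we have $m_0=|E(B)|+8h+q$. Substituting $h=\tfrac{n-3}{2}$ gives $|E(B)|+q=\tfrac{3n+1}{2}$, so with $|E(B)|\le\tfrac{3(n-1)}{2}$ we need $q\ge 2$. But the faces of $B$ other than the $h$ hexagons have total boundary length $2|E(B)|-6h$, which is at most $6$ and contains no vertex of $D$, so these few small regions cannot accommodate two edges — a contradiction. I expect this last point to be the main obstacle: one must establish a sharp sparsity estimate for the edges of a $3$-planar drawing that lie inside a polygon of length at most six with no interior vertex (ruling out, e.g., a quadrilateral face carrying its two crossing diagonals), and handle the borderline cases where $B$ is disconnected or ``thin''.

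Finally, the second statement follows immediately from the first. If $H$ is a simple $3$-planar graph on $n\ge 3$ vertices, then $|E(H)|\le 5.5n-11.5$ by the known density bound for simple $3$-planar graphs; since $|E(H)|$ is an integer and $5.5n-11.5$ is either non-integral (for even $n$) or not attained by any $3$-planar graph (for odd $n$, by the first statement), we conclude $|E(H)|\le 5.5n-12$.
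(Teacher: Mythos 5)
Your overall strategy is the same as the paper's: invoke \cref{cor:characterization-dense-3-planar} to force roughly $\tfrac12(n-2)$ edge-disjoint $F^3_6$ configurations, use a planar/Euler count on their boundaries to conclude that only two triangular regions are left over, and then count edges to contradict $m=5.5n-11.5$. (The parity reduction to odd $n$ is a nice touch the paper does not need.) However, your execution has a genuine gap at exactly the point you flag as ``the main obstacle'': your accounting yields $b+q=\tfrac{3n+1}{2}$ with $b\le\tfrac{3n-3}{2}$, so you must prove $q\le 1$, i.e.\ that the two leftover triangular faces together carry at most one further edge of $D$. This is not immediate: the paper explicitly allows boundary edges of an $F^3_6$ to be \emph{absent} from the drawing, and a leftover triangle one of whose sides is missing can legitimately contain an edge homotopic to that side without creating an empty lens; so $q=2$ is not obviously absurd, and your closing sentence simply asserts the needed sparsity estimate rather than proving it. The paper's proof is arranged so that this delicate case never arises with zero slack: it takes a full triangulation $\mathcal{H}$ of \emph{all} $n$ vertices containing the configuration boundaries, budgets all $3(n-2)$ triangulation edges as potential edges of $G$ plus five extra edges per $F^3_6$ plus one extra edge for the two leftover triangles, and lands at $5.5n-12.5$, a full unit below the assumed density.

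A second, smaller issue is that the degenerate configurations — a vertex of $D$ lying in no hexagon boundary, a disconnected $B$, or boundary edges of distinct $F_i$ that coincide — are only promised to be ``ruled out by pushing the inequalities'', not actually ruled out; a vertex of $D$ interior to a leftover face of $B$ would invalidate your claim that those faces contain no vertex of $D$ and hence the bound on $q$. Again, the paper's device of triangulating on the entire vertex set makes every face of $\mathcal{H}$ a triangle with no interior vertices by construction, so these cases never appear. If you replace your graph $B$ of boundary cycles by such a triangulation $\mathcal{H}\supseteq B$ and then supply the missing argument that two empty triangles of $\mathcal{H}$ admit at most one additional (non-homotopic) edge in a lens-free drawing, your proof closes.
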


\begin{proof}
    Assume that there exists a (not necessarily simple) 3-planar graph $G$ with $5.5n-11.5$ edges. Then, by \cref{cor:characterization-dense-3-planar}, we would find in any 3-planar drawing $D$ of $G$ at least $0.5(n-2)-0.5$ full 3-planar hexagons. Let $\mathcal{H}$ be any triangulation on the set of vertices that includes all the boundaries of all $F^3_6$ configurations in $D$. As $F^3_6$ configurations consist of four triangles, only $2(n-2) - 4(0.5(n-2)-0.5) = 2$ triangles in $\mathcal{H}$ do not belong to an $F^3_6$.
    
    Now we count the edges. Starting with the edges of $\mathcal{H}$, each $F^3_6$ consists of five additional edges. The other two triangles may contain one additional edge, which gives in total at most $3(n-2) + 2.5(n-2) - 2.5 + 1 = 5.5n-12.5$ edges, contradicting the assumed density.
\end{proof}

From \cref{th:density-2-planar-forbidden-configuration} and \cref{th:density-3-planar-forbidden-configuration} we can also derive new lower bounds for the number of crossings in a graph. The proof can be found in \cref{sec:linear-bounds}.

\begin{restatable}{theorem}{thlinearbounds}\label{th:linear-bounds}
Let $G$ be a graph with $n>2$ vertices and $m$ edges. Then 
    \begin{enumerate}[(a)]
        \item $\cross(G) \ge \frac{37}{9}m - \frac{155}{9}(n-2)$,
        \item $\cross(G) \ge 5m - \frac{203}{9}(n-2)$.
    \end{enumerate}
\end{restatable}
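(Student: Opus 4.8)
The plan is to prove both bounds by the standard bootstrapping argument that turns a linear edge-density bound for a restricted class of drawings into a lower bound on the crossing number of an arbitrary graph. Concretely, I would take an optimal drawing $D$ of $G$ (one with $\cross(D)=\cross(G)$) and repeatedly delete one edge from each "bad" configuration — for part (a) from each crossing pair of edges that is \emph{not} protected, and more precisely from each $F^2_5$ and $F^2_6$ configuration, and for part (b) from each $F^3_6$ configuration — tracking how many crossings each such deletion removes. The key accounting device is: deleting an edge involved in a configuration removes a controlled number of crossings, while \cref{th:density-2-planar-forbidden-configuration} (resp. \cref{th:density-3-planar-forbidden-configuration}) bounds the number of edges that can survive once all such configurations are destroyed and the drawing has become $2$-planar and $F^2_5,F^2_6$-free (resp. $3$-planar and $F^3_6$-free).

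\textbf{Key steps for (a).} First I would dispose of the easy regimes: if $m \le \tfrac{155}{37}(n-2)$ the right-hand side is non-positive and there is nothing to prove, so assume $m$ is larger. Starting from an optimal drawing $D$, delete every edge that is crossed at least $3$ times, one at a time; each such deletion removes at least $3$ crossings, so after deleting $e_{\ge 3}$ edges we have removed at least $3e_{\ge 3}$ crossings and are left with a $2$-planar drawing $D'$ on $m' = m - e_{\ge 3}$ edges. Next, destroy all $F^2_5$ and $F^2_6$ configurations in $D'$ by removing one edge from each; since these configurations are edge-disjoint (as noted in the excerpt, they share only boundary edges, which are not part of the configuration), this is a well-defined process, and I must argue that each removed edge carried "enough" crossings — an edge interior to such a configuration is crossed, and one can check how many times — so that the number $t$ of configurations destroyed satisfies a bound like $\cross(D') \ge c\cdot t$ for a suitable constant. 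After this the drawing is $2$-planar, $F^2_5$-free and $F^2_6$-free, so by the second bound of \cref{th:density-2-planar-forbidden-configuration} it has at most $\tfrac{13}{3}(n-2)$ edges; combining $m - e_{\ge 3} - t \le \tfrac{13}{3}(n-2)$ with $\cross(G) = \cross(D) \ge 3 e_{\ge 3} + c\,t$ and optimizing the coefficients (the $\tfrac{37}{9}$ and $\tfrac{155}{9}$ should fall out as a convex combination of the slopes $3$, $4.5$, $\tfrac{13}{3}$ governing the three thresholds $2$-planar / $F^2_5$-free / also $F^2_6$-free) yields (a). For part (b) the same scheme runs with the thresholds "crossed $\ge 4$ times" and "$3$-planar, $F^3_6$-free has $\le 5(n-2)$ edges," using \cref{th:density-3-planar-forbidden-configuration}, and the constants $5$ and $\tfrac{203}{9}$ emerge analogously.

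\textbf{Main obstacle.} I expect the delicate point to be the bookkeeping in the middle stage: when we delete one edge per $F^2_5$/$F^2_6$ (resp. $F^3_6$) configuration, we must be sure (i) that the deletions do not interact badly — i.e. destroying one configuration does not secretly destroy or create another in a way that miscounts — which the edge-disjointness and the fixed "boundary" notion from the definitions section should handle, and (ii) that each such deletion is credited with the right number of crossings removed, accounting for the possibility that an edge lies in several crossings that are themselves already counted elsewhere. The cleanest route is probably to avoid deleting edges one-by-one with ad hoc crossing counts and instead set up a single inequality of the form $\cross(D) + \alpha\,n + \beta \ge \gamma\,m$ obtained by summing, over a partition of the edge set into "high-crossing," "configuration," and "residual" parts, the per-edge contributions guaranteed by the density theorems; then (a) and (b) are two choices of $(\alpha,\beta,\gamma)$ corresponding to which density bound one leans on hardest. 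Getting the exact rational constants to match $\tfrac{37}{9}m-\tfrac{155}{9}(n-2)$ and $5m-\tfrac{203}{9}(n-2)$ will require carefully solving the resulting small linear program, but that is routine once the structural claims are in place; the detailed version is deferred to \cref{sec:linear-bounds}.
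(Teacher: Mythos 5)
Your high-level template (delete edges, credit crossings per deletion, finish with a density bound) is the right genre, but the specific accounting you propose cannot produce the claimed constants, and it misses the two ideas that actually drive the paper's proof. First, you credit each destroyed configuration only with the crossings on the single edge you delete from it: an edge of an $F^2_5$ or $F^2_6$ carries $2$ crossings, so in the regime where configurations dominate your inequality degenerates to $\cross(G)\ge 2m-O(n)$; already for an optimal $2$-planar graph ($m=5(n-2)$, $\cross=\frac{10}{3}(n-2)$, where (a) is tight) your scheme only certifies $\frac{4}{3}(n-2)$ crossings. The paper instead works with the $3$-planar theorem and exploits that after deleting one $3$-crossed edge from an $F^3_6$, two further \emph{independent} $3$-crossed edges remain inside it, so each configuration is worth $9$ crossings for $3$ deletions; this is combined with a counting argument (\cref{pro:optimal-faces}, built on \cref{th:density-2-planar-forbidden-configuration}) that forces many $F^2_5$/$F^2_6$ configurations to survive in the residual drawing. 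Your "convex combination of slopes $3$, $4.5$, $\frac{13}{3}$" has no mechanism behind it.

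Second, and more importantly, the entire difficulty of (a) lies in controlling $m_4$, the number of deleted edges that had exactly four crossings: without extra information these only contribute $4$ each, which caps the slope at $4$ and yields only the weaker bound $\cross(G)\ge 4m-\frac{50}{3}(n-2)$ that the paper explicitly flags as the "short" version. The paper's improvement to $\frac{37}{9}$ comes from \cref{pro:m_4andm_3}, proved via an auxiliary triangulation $\mathcal{H}$ with charging triangles and pillars, giving $m_4\le m_3+c_{hex}+4m_0+2c_\triangle$; nothing in your outline addresses this, and no amount of "solving the resulting small linear program" will recover it. Finally, for (b) the paper does not rerun the scheme with the $3$-planar theorem alone (which in your version would again top out around slope $3$); it first strips edges down to $6(n-2)$ using the $4$-planar density bound (each such deletion removes $\ge 5$ crossings) and then invokes (a). So the gap is genuine: the per-configuration crossing credit is too small, the independent-edges and $m_4$ arguments are absent, and the reduction used for (b) is different from what you describe.
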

\medskip

A slightly weaker bound than in (a) of $\cross(G) \ge 4m-\frac{50}{3}(n-2)$ can be derived with a significantly shorter proof by only applying \cref{th:density-3-planar-forbidden-configuration}; we point this out in the proof.

That improves the best known results for $m > 5(n-2)$, which are $\cross(G) \ge 4m - \frac{103}{6}(n-2)$ \cite{DBLP:journals/dcg/PachRTT06} respectively $\cross(G) \ge 5m - \frac{139}{6}(n-2)$ \cite{DBLP:journals/comgeo/Ackerman19}.
\cref{th:linear-bounds} implies directly a better constant in the Crossing Lemma.

\begin{theorem}\label{th:crossing-lemma}
    Let $G$ be a graph with $n$ vertices and $m$ edges. Then $\cross(G) \ge \frac{1500}{41209}\frac{m^3}{n^2} - \frac{54791}{41209}n > \frac{1}{27.48}\frac{m^3}{n^2} - 1.33n$. If $m \ge 6.77n > \frac{203}{30}n$, then $\cross(G) \ge \frac{1500}{41209}\frac{m^3}{n^2} > \frac{1}{27.48}\frac{m^3}{n^2}$.
\end{theorem}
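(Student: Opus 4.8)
The plan is to feed part~(b) of \cref{th:linear-bounds} into the standard probabilistic proof from the BOOK; we may assume $G$ is simple. First I would record the inequality $\cross(H)\ge 5|E(H)|-\tfrac{203}{9}|V(H)|$, valid for \emph{every} simple graph $H$: for $|V(H)|\ge 3$ it is (b) with $\tfrac{203}{9}(|V(H)|-2)$ enlarged to $\tfrac{203}{9}|V(H)|$, and for $|V(H)|\le 2$ the right-hand side is negative while $\cross(H)=0$. Now fix a crossing-minimal drawing of $G$ (in which adjacent edges are disjoint) and let $S\subseteq V(G)$ retain each vertex independently with probability $p\in(0,1]$. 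Then $\mathbb{E}[|V(G[S])|]=pn$, $\mathbb{E}[|E(G[S])|]=p^2m$, and every crossing of the drawing survives in the induced drawing with probability exactly $p^4$, so $\mathbb{E}[\cross(G[S])]\le p^4\cross(G)$. Applying the recorded inequality to $G[S]$, taking expectations, and dividing by $p^4$ gives
\[
\cross(G)\ \ge\ \frac{5m}{p^2}-\frac{203\,n}{9\,p^3}\qquad\text{for every }p\in(0,1].
\]
The right-hand side is maximized at $p^\ast=\tfrac{203\,n}{30\,m}$, which lies in $(0,1]$ exactly when $m\ge\tfrac{203}{30}n$ (in particular when $m\ge 6.77n$); substituting $p=p^\ast$ yields $\cross(G)\ge\tfrac{1500}{41209}\cdot\tfrac{m^3}{n^2}$, and $\tfrac{41209}{1500}=27.47\overline{2}<27.48$. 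That settles the second assertion.

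For the first assertion I would treat $m<\tfrac{203}{30}n$ separately, where $p=1$ is optimal and the displayed inequality just returns the linear bounds. Setting $\rho=m/n$, I would use in each range the strongest available bound: (b) for $\rho\ge 6$; part~(a), $\cross(G)\ge\tfrac{37}{9}m-\tfrac{155}{9}(n-2)$, for $5\le\rho\le 6$; the classical bound $\cross(G)\ge\tfrac{7}{3}m-\tfrac{25}{3}(n-2)$ (tight for optimal $2$-planar graphs) for $4\le\rho\le 5$; and the elementary $\cross(G)\ge\max\!\bigl(0,\,m-3(n-2)\bigr)$ for $\rho\le 4$. In each range one checks $\tfrac{1500}{41209}\tfrac{m^3}{n^2}-(\text{that bound})\le\tfrac{54791}{41209}n$. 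Since $\rho\mapsto\tfrac{1500}{41209}\rho^3-(a\rho-b)$ is convex, the maximum over an interval sits at an endpoint, so only $\rho\in\{3,4,5,6,\tfrac{203}{30}\}$ need be examined; the gaps there are about $0.98n,\ 1.33n,\ 1.22n,\ 0.42n,\ 0$, the value at $\rho=4$ being exactly $\bigl(\tfrac{1500\cdot 64}{41209}-1\bigr)n=\tfrac{54791}{41209}n$. Hence the worst case is $\rho\to 4$ (take $m=4(n-2)$, $n\to\infty$), giving $\cross(G)\ge\tfrac{1500}{41209}\tfrac{m^3}{n^2}-\tfrac{54791}{41209}n$ for all $G$, and $\tfrac{54791}{41209}<1.33$.

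The cubic optimization is routine; the delicate part is the universal bound. The main obstacle is to realize that (a) and (b) by themselves are too weak around $\rho\approx 4.5$, so one needs a linear bound that already dominates $m-3(n-2)$ \emph{at} $\rho=4$: the bound $\tfrac{7}{3}m-\tfrac{25}{3}(n-2)$ is exactly borderline there, since $4\cdot\tfrac{7}{3}-\tfrac{25}{3}=1$, which is why the extremal ratio and the constant $\tfrac{54791}{41209}$ come out precisely at $m/n=4$. One must also track the $(n-2)$-versus-$n$ discrepancy: writing $m-3(n-2)=(\rho-3)n+6$, the surplus $+6$ (and analogously $+\tfrac{50}{3},+\tfrac{310}{9},+\tfrac{406}{9}$ in the other bounds) is exactly what keeps each inequality valid for every finite $n$, equality being approached only in the limit. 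With these in hand, the remaining endpoint checks are short.
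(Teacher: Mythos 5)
Your proposal is correct and follows essentially the same route as the paper: the probabilistic sampling argument with $p=\frac{203n}{30m}$ applied to \cref{th:linear-bounds}(b) for $m\ge\frac{203}{30}n$, and a range-by-range comparison with the linear bounds $m-3(n-2)$, $\frac{7}{3}m-\frac{25}{3}(n-2)$ and \cref{th:linear-bounds} otherwise. Your endpoint checks (in particular identifying $\rho=4$ as the extremal ratio producing the constant $\frac{54791}{41209}$) correctly fill in the details that the paper's proof leaves implicit.
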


\begin{proof}
Let $G$ be a graph with $n$ vertices and $m$ edges.
For the case $m \ge \frac{203}{30}n$, we 
construct a random subgraph $G'$ by selecting every vertex of $G$ independently with probability $p = \frac{203}{30}n/m\le 1$. We denote the number of edges and vertices in $G'$ by $m'$ and $n'$.
By \cref{th:linear-bounds} and linearity of expectation, we obtain  $\mathbb{E}[\cross(G')] \ge 5\mathbb{E}[m']- \frac{203}{9}\mathbb{E}[n']$. We replace $\mathbb{E}[n'] = pn$, $\mathbb{E}[m'] = p^2m$ and $\mathbb{E}[\cross(G')] = p^4 \cross(G)$, and get 
\[\cross(G) \ge \frac{5m}{p^2} - \frac{203n}{9p^3} = \frac{1500}{41209} \frac{m^3}{n^2}.\]

For the case $m < \frac{203}{30}n$ we compare the bound $\cross(G) \ge \frac{1500}{41209}\frac{m^3}{n^2} - \frac{54791}{41209}n$ with the corresponding best known linear bounds $\cross(G)\ge m -3(n-2)$, $\cross(G) \ge \frac{7}{3}m-\frac{25}{3}(n-2)$ \cite{DBLP:journals/dcg/PachRTT06} and \cref{th:linear-bounds}.
\end{proof}

One direct application of the improved Crossing Lemma is a new bound on the edge density for $k$-planar graphs.

\begin{corollary}\label{cor:density-k-planar}
    For $k\ge 2$, any simple $k$-planar graph with $n$ vertices has at most $3.71\sqrt{k}n$ edges.
\end{corollary}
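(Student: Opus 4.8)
The plan is to combine the elementary edge--crossing inequality for $k$-planar drawings with the improved Crossing Lemma of \cref{th:crossing-lemma}. Let $G$ be a simple $k$-planar graph with $n$ vertices and $m$ edges, and fix a $k$-planar drawing $D$ of $G$. Since every edge is crossed at most $k$ times and every crossing belongs to exactly two edges, $\cross(G)\le\cross(D)\le km/2$. Everything else is a comparison of constants against \cref{th:crossing-lemma}, split according to the value of $k$.

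First I would treat the principal case $k\ge 4$. If $m<6.77n$, we are immediately done, since $6.77n<7.42n=3.71\cdot 2\cdot n\le 3.71\sqrt{k}\,n$. If instead $m\ge 6.77n$, then \cref{th:crossing-lemma} gives $\cross(G)\ge\frac{1500}{41209}\frac{m^{3}}{n^{2}}$; combining this with $\cross(G)\le km/2$ yields $\dfrac{m^{2}}{n^{2}}\le\dfrac{41209}{3000}\,k<13.74\,k$, and hence $m<\sqrt{13.74}\,\sqrt{k}\,n<3.71\sqrt{k}\,n$, where the last step only uses $3.71^{2}=13.7641>13.74$.

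The remaining cases $k=2$ and $k=3$ require a separate argument, and this is the only genuinely delicate point: a $2$- or $3$-planar graph is never dense enough to enter the range $m\ge 6.77n$ in which the clean form of \cref{th:crossing-lemma} is effective, while $3.71\sqrt{2}\approx 5.25$ and $3.71\sqrt{3}\approx 6.43$ both lie below $6.77$, so the Crossing-Lemma argument by itself does not close these cases. Here I would instead invoke the sharper linear density bounds directly: for $k=2$ the classical bound $m\le 5n-10<3.71\sqrt{2}\,n$ of Pach and T\'oth \cite{DBLP:journals/combinatorica/PachT97}, and for $k=3$ the bound $m\le 5.5n-12<3.71\sqrt{3}\,n$ from \cref{cor:density-3-planar}. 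Beyond these case distinctions and the two elementary numerical checks $\sqrt{13.74}<3.71$ and $7.42>6.77$, I do not anticipate any real obstacle.
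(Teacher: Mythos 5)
Your proposal is correct and follows essentially the same route as the paper, which likewise combines the improved Crossing Lemma with the inequality $\cross(G)\le km/2$ to get $m\le\sqrt{13.74\,k}\,n\le 3.71\sqrt{k}\,n$. Your additional case distinctions (handling $m<6.77n$ and the cases $k=2,3$ via the linear density bounds) are a welcome tightening of a point the paper's one-line proof glosses over, since the clean form $\cross(G)\ge\frac{1}{27.48}\frac{m^3}{n^2}$ is only stated for $m\ge 6.77n$.
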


\begin{proof}
As in \cite{DBLP:journals/combinatorica/PachT97}, the new bound for $k$-planar graphs can be derived directly from the new Crossing Lemma and the fact that each edge can be crossed at most $k$ times:
\[\frac{1}{27.48}\frac{m^3}{n^2} \leq \cross(G) \leq km/2,\]
which then leads to $m \leq \sqrt{13.74 k} n \leq 3.71\sqrt k n$.
\end{proof}

The best previous constant in the bound was $3.81$.

\section{Proof of Theorems \ref{th:density-2-planar-forbidden-configuration} and \ref{th:density-3-planar-forbidden-configuration}}\label{sec:charging}

In this section, we give the proofs of the two central theorems of our paper. First, we will introduce some necessary concepts, we basically adopted the notation by Ackerman \cite{DBLP:journals/comgeo/Ackerman19}.

{\bf Notation:} 
We interpret a drawing $D$ as a plane map $M(D) = (V',E')$ whose vertices $V'$ are either vertices $V(D)$ of $D$ or crossing points of $D$. An edge $e$ in $E'$ connects two vertices of $V'$, i.e., it is a crossing-free segment of an edge of $D$, which we denote by $\overline{e}$. We call an edge of $E'$ an \emph{$r$-edge}, if $r \in \{0,1,2\}$ of its endpoints are vertices of $D$. For a vertex $v \in V(D)$, we write $\deg(v)$ for its \emph{degree}. The degree of a crossing is always four.

Let $F'$ be the set of faces of $M(D)$. For a face $f \in F'$, we write $\vert f \vert$ for the number of edges in $E'$ that are incident to $f$. Similarly, $\vert V(f) \vert $ denotes the number of (real) vertices of $D$ that are incident to $f$. Note that we will assume 2-connectivity, hence the boundary of every face is a simple cycle and we avoid double-counting of the vertices.
A face with $\vert f \vert = s$ is called a \emph{$s$-gon}. In the cases of $s=3,4,5,6,7$ we write instead \emph{triangle, quadrilateral, pentagon, hexagon} and \emph{heptagon}. 
If we want to denote that $\vert V(f)\vert = r$ and $\vert f \vert = s$, we write \emph{$r$-$s$-gon} and use this wording also for $2$-triangles, $0$-quadrilaterals, etc.\ for simplicity. If we only want to specify for a face that $\vert V(f)\vert = r$, then we call it an $r$-face.

Further, we need some definitions for relations between faces in $F'$. Two faces are \emph{$r$-neighbors} if they share an $r$-edge.
Let now be $e_0 \in E'$ a 0-edge of a face $f_0 \in F'$ and $f_1 \in F'$ the 0-neighbor of $f_0$ at $e_0$. For $i \ge 1$, if $f_i \in F'$ is a 0-quadrilateral, then let be $f_{i+1} \in F'$ the 0-neighbor of $f_i$ at the edge $e_i$ opposite to $f_{i-1}$. The face $f_i$, for which $i$ is maximal, is called the \emph{wedge-neighbor} of $f_0$ at $e_0$. Since $D$ is 3-planar, we have $i \le 3$. Notice the alternative definition of a wedge-neighbor by Ackerman \cite{DBLP:journals/comgeo/Ackerman19}.
Finally, we define two faces $f,f' \in F'$ to be \emph{vertex-neighbors}, if $f$ and $f'$ share a crossing-vertex $c$, but not an edge in $E'$ incident to $c$. See \cref{fig:definition-neighbors} for an illustration of the defined terms.

\begin{figure}
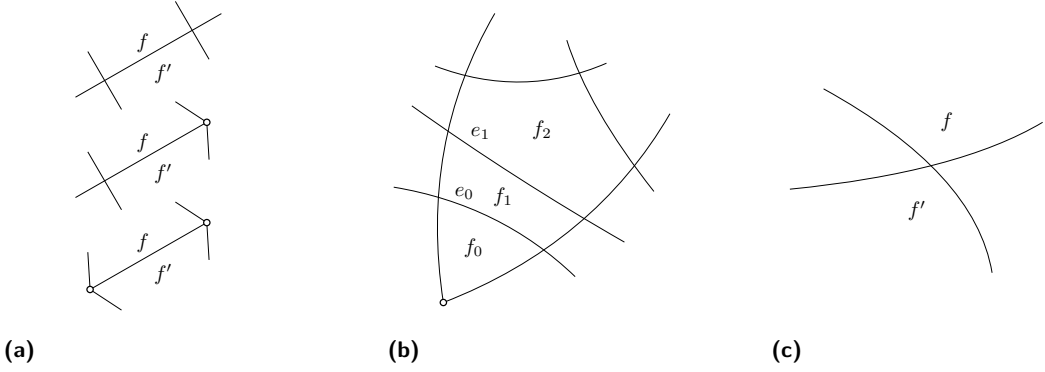

    \begin{subfigure}[b]{0.27\linewidth}
    \center
        \includegraphics[page=4, width = \linewidth]{definitions.pdf}
        \subcaption{}
    \end{subfigure}  
    \hfill
    \begin{subfigure}[b]{0.27\linewidth}
    \center
        \includegraphics[page=5, width = \linewidth]{definitions.pdf}
        \subcaption{}
    \end{subfigure} 
    \hfill
    \begin{subfigure}[b]{0.27\linewidth}
    \center
        \includegraphics[page=6, width = \linewidth]{definitions.pdf}
        \subcaption{}
    \end{subfigure} 
    \hfill
        \caption{Illustrations of the defined neighborhood-relations. (a) From top to bottom: The faces $f$ and $f'$ are 0-neighbors, 1-neighbors, 2-neighbors resp. (b) The 0-pentagon $f_2$ is the wedge-neighbor of the 1-triangle $f_0$ at its edge $e_0$. (c) The faces $f$ and $f'$ are vertex-neighbors.
        }
        \label{fig:definition-neighbors}
\end{figure}

{\bf Preliminaries for the proofs:} 
We prove both theorems by induction. This will allow us, as in \cite{DBLP:journals/comgeo/Ackerman19}, to study only 2-connected drawings (see \cref{pro:2-connected}).
For $n=3$, independently from the forbidden configurations, there are at most three non-homotopic edges in any drawing and therefore both theorems hold. If $n>3$ and there is a vertex $v \in G$ with $\deg(v) \le 4$, then the theorems follow after removing $v$ by induction.

\begin{proposition}\label{pro:2-connected}
    If $D$ is not 2-connected, then \cref{th:density-2-planar-forbidden-configuration} and \cref{th:density-3-planar-forbidden-configuration} are true.
\end{proposition}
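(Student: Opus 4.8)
The plan is to reduce the non-2-connected case to strictly smaller instances, exactly as is standard in discharging proofs of density bounds. First I would observe that the claimed bounds are additive over the relevant decomposition: if a graph $H$ on $n_H \ge 3$ vertices has edge bound $f(n_H) = \alpha(n_H-2)$ (with $\alpha \in \{4.5, \tfrac{13}{3}, 5\}$ depending on the theorem), and we glue two such graphs at a single vertex or along a single edge, the ``$-2$'' shift is precisely what makes the bounds compose. So I would handle the two classical cases. If $D$ has a cut vertex $v$, write $G = G_1 \cup G_2$ where $G_1, G_2$ share only $v$, each $G_i$ has fewer vertices than $G$ (but still at least $2$; if some $G_i$ has exactly $2$ vertices, i.e.\ $v$ plus one neighbor of degree $1$, that neighbor has degree $\le 4$ and is already handled by the preliminary reduction, so we may assume $n_{G_i} \ge 3$). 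Each $G_i$ inherits a drawing from $D$ that is still $k$-planar and still free of the forbidden configuration (a sub-drawing of a configuration-free drawing is configuration-free). By induction each $G_i$ has at most $\alpha(n_{G_i}-2)$ edges, and since $n_{G_1} + n_{G_2} = n+1$, summing gives $m \le \alpha(n_{G_1}-2) + \alpha(n_{G_2}-2) = \alpha(n-1-2) \le \alpha(n-2)$, as required.

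Next I would treat the case that $D$ is connected but not 2-connected only through a $2$-cut that is actually a separating pair of vertices joined (possibly) by an edge --- more precisely, the remaining obstruction to 2-connectivity after excluding cut vertices is that $G$ decomposes along a separation of order $2$. Here I would split along a pair $\{u,w\}$: write $G = G_1 \cup G_2$ with $V(G_1) \cap V(G_2) = \{u,w\}$, add the edge $uw$ to both parts if it is not already present (this only helps, since we are upper-bounding), and note again that each $G_i$ is $k$-planar and configuration-free in the induced drawing, with $n_{G_i} < n$ and $n_{G_1} + n_{G_2} = n+2$. Applying induction, $m + [uw \text{ added twice}] \le \alpha(n_{G_1}-2) + \alpha(n_{G_2}-2) = \alpha(n-2)$; subtracting off the at most two artificially added copies of $uw$ (or noting $uw$ was already counted once in $G$) still yields $m \le \alpha(n-2)$. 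The key point throughout is that taking an induced sub-drawing cannot create a new $F^2_5$, $F^2_6$, or $F^3_6$, since these configurations together with their (uncrossed) boundary cycles are genuine subgraphs whose presence would already be witnessed in $D$.

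The main obstacle I expect is bookkeeping around small parts and the edge $uw$: one must be careful that after the degree-$\le 4$ reduction we may assume every part arising in a separation has at least $3$ vertices, and that adding $uw$ to a part does not push it past $k$-planarity or create a forbidden configuration --- here one uses that $uw$ can be routed very close to the separator inside the sub-drawing without new crossings (it bounds a region containing only one side of the separation). A secondary subtlety is that the induction is on the number of vertices and the theorems are stated with the same hypotheses, so I must confirm the induction hypothesis applies verbatim to each $G_i$ (it does, since $G_i$ admits a $k$-planar configuration-free drawing and $n_{G_i} \ge 3$). Once these points are checked, the additive computation closes the proof for both \cref{th:density-2-planar-forbidden-configuration} and \cref{th:density-3-planar-forbidden-configuration} simultaneously, which is why the proposition is stated for both at once.
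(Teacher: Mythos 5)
There is a genuine gap: you are proving the wrong statement of 2-connectivity. In this paper the relevant object is not the abstract graph $G$ but the planarization $M(D)$, whose vertex set consists of the real vertices \emph{and} the crossing points of $D$; the whole point of the proposition is that afterwards every face of $M(D)$ is bounded by a simple cycle, which is what the discharging argument needs. Consequently a cut vertex of $M(D)$ may be a \emph{crossing} of $D$, and your argument never addresses this case. The paper handles it by replacing the crossing by a degree-four vertex (which adds one vertex and two edges), splitting at that vertex into two induced sub-drawings $D'$, $D''$ with $n'+n''=n+2$, and checking that $a(n'-2)+a(n''-2)-2 = a(n-2)-2 < a(n-2)$; the ``$-2$'' correction for the artificially created edges is essential and does not appear anywhere in your write-up. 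Your first case (a cut vertex of $D$ that is a real vertex) matches the paper's first case, including the additive bookkeeping $n'+n''=n+1$ and the use of the minimum-degree reduction to guarantee the parts are large enough for the induction hypothesis.

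A secondary problem is your second paragraph: a connected graph on at least three vertices that is not 2-connected always has a cut vertex, so there is no residual ``separation of order 2'' case to handle --- that would be an obstruction to 3-connectivity, which is not claimed. The machinery you set up there (adding the edge $uw$ to both parts, rerouting it near the separator, worrying about creating forbidden configurations) is solving a problem the proposition does not pose, while the case it does pose (a crossing point as cut vertex of the plane map) is the one you omit.
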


\begin{proof}
    The argument follows the lines of \cite{DBLP:journals/comgeo/Ackerman19}. To argue for the different scenarios of \cref{th:density-2-planar-forbidden-configuration} and \cref{th:density-3-planar-forbidden-configuration} at the same time, let $a(n-2)$ for $a \in \{\frac{13}{3},4.5,5\}$ be an upper bound on the number of edges, which we want to prove.
    Assume that there is a vertex $x\in E'$ such that $M(D) \setminus \{x\}$ is not connected. Then $x$ is either a vertex or a crossing of $D$.
    
    If $x$ is a vertex of $D$, then $D \setminus \{x\}$ is not connected, so let $D_1, ..., D_k$ be the connected components of $D \setminus \{x\}$. Let further $D'$ be the drawing induced by $V(D_1) \cup \{x\}$ and $D''$ the drawing induced by $V(D_2) \cup ... \cup V(D_k) \cup \{x\}$. Let $\vert V(D')\vert = n'$, $\vert V(D'')\vert = n''$ and observe $n' + n'' = n+1$. Since every vertex has at least degree four, $4 < n', n'' < n$ holds. By induction, it follows $m \le  (an' - 2a) + (an'' -2a) = a(n+1)-4a < a(n-2)$.
    
    Assume now that $x$ is a crossing of $D$. Let $\hat{D}$ be the drawing obtained by replacing $x$ by a vertex. This increases the number of vertices by one and the number of edges by two. Let $D_1, ..., D_k$ be the connected components of $\hat{D} \setminus \{x\}$. Again, let $D'$ be the drawing induced by $V(D_1) \cup \{x\}$ and $D''$ the drawing induced by $V(D_2) \cup ... \cup V(D_k) \cup \{x\}$. For $\vert V(D')\vert = n'$, $\vert V(D'')\vert = n''$ we observe $4 < n', n'' < n$. By induction, we get $m \le  (an' - 2a) + (an'' -2a) -2 = a(n+2)-4a -2 < a(n-2)$.
\end{proof}

Therefore we will always assume that $D$ is 2-connected. As both theorems consider upper bounds for the number of edges for the specific graph classes, we also assume that we consider graphs $G$ that are edge-maximum for the specific class of graphs, and for such graphs a corresponding drawing $D$ that is crossing-minimum. These assumptions will enable us to conduct a focused analysis of the bounds for the number of edges.

\begin{proposition}\label{pro:charging-helper}
    Let $D$ be a drawing that is either
    (1) 2-planar $F^2_5$-free or
    (2) 2-planar $F^2_5$-free and $F^2_6$-free or
    (3) 3-planar $F^3_6$-free 
    and  maximally-dense-crossing-minimal under this restriction. Then the following properties hold:
    \begin{enumerate}[(a)]
    \item There are no empty lenses.
    \item For all faces $f \in F'$ we have $\vert f \vert \ge 3$.
    \item The wedge-neighbor of a 0-triangle or a 1-triangle is a face $f\in F'$ with $\vert f \vert \ge 4$ that is not a 0-quadrilateral. 
    \item If there are two vertices $u,v \in V(D)$ on the boundary of a face $f \in F'$, then the edge $uv$ is part of the boundary of $f$. Therefore every face $f \in F'$ with $\vert V(f)\vert > 2$ is a 3-triangle.
\end{enumerate}
\end{proposition}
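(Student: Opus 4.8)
I would prove the four items in order, using for each the earlier ones together with the standing assumptions that $G$ is edge-maximal in its class and that $D$ is crossing-minimal among the $k$-planar $F$-free drawings of $G$. The only structural fact I need about the forbidden configurations is that in each of $F^2_5$, $F^2_6$, $F^3_6$ every edge is crossed at least twice; this is immediate from \cref{fig:forbidden-configuration} (the five diagonals of $F^2_5$ form a pentagram, in $F^2_6$ and $F^3_6$ each $2$-hop chord is crossed by the two chords through the vertex it cuts off, and the two $3$-hop chords of $F^3_6$ are crossed three times each). For (a): if $f$ is an empty lens, i.e.\ a $2$-gon face of $M(D)$ with both corners $x,y$ crossings, its two boundary edges of $E'$ are crossing-free arcs $\alpha\subseteq\overline{e_1}$, $\beta\subseteq\overline{e_2}$, and $x,y$ are consecutive crossings of $\overline{e_1}$ with $\overline{e_2}$ (the case $\overline{e_1}=\overline{e_2}$ is a self-crossing edge, removed by a detour). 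Exchanging $\alpha$ and $\beta$ and perturbing yields a drawing $D'$ of $G$ with $\cross(D')=\cross(D)-2$ that is still $k$-planar ($\overline{e_1},\overline{e_2}$ each shed the two crossings $x,y$) and still $F^k_p$-free: any forbidden configuration of $D'$ not using $e_1$ or $e_2$ already occurs in $D$, and $e_1,e_2$ are now crossed at most $k-2\le 1$ times, hence cannot be edges of an $F^k_p$. This contradicts crossing-minimality. Then (b) follows quickly: $|f|=1$ is a loop; a $2$-gon with two crossing corners is an empty lens; a $2$-gon with a real corner is bounded either by two homotopic parallel edges, which may be assumed absent in a crossing-minimal drawing, or by two adjacent edges that cross once and enclose a vertex-free bigon, where rerouting one along the other removes that crossing.

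For (c), let $f_0$ be a $0$- or $1$-triangle with distinguished $0$-edge $e_0$ and let $f_i$ be its wedge-neighbor, reached through a chain $f_1,\dots,f_{i-1}$ of $0$-quadrilaterals. By definition $f_i$ is not a $0$-quadrilateral, and by (b) $|f_i|\ge 3$, so it suffices to exclude $|f_i|=3$. Tracing the two sides of the wedge shows they are sub-arcs of two edges $\overline a\neq\overline b$ of $D$, each crossed once by each of the $i$ edges carrying $e_0,\dots,e_{i-1}$; if $f_i$ were a triangle, all three of its corners would be crossings (the two endpoints of $e_{i-1}$ and a point of $\overline a\cap\overline b$), so $f_i$ would be a $0$-triangle and $\overline a,\overline b$ would cross at the far end of the wedge, and also at its near end when $f_0$ is a $0$-triangle. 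Then $\overline a$ would be crossed $i+2$ times (if $f_0$ is a $0$-triangle) or $i+1$ times (if $f_0$ is a $1$-triangle); the first case is impossible already for $i\ge 1$ under $2$- and $3$-planarity, and in the remaining cases the region $f_0\cup e_0\cup\cdots\cup f_i$ has no real vertex except possibly the apex $v$ of $f_0$, so it is an empty lens between $\overline a$ and $\overline b$ (contradicting (a)) or a vertex-free bigon with corner $v$, where rerouting $\overline a$ from $v$ along $\overline b$ removes a crossing. Degenerate identifications ($\overline a=\overline b$, a rung equal to $\overline a$, $\overline b$, or to another rung, or the wedge closing up on $f_0$) create self-crossings or empty lenses and are excluded in the same way.

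For (d), suppose $u,v\in V(D)$ lie on the simple cycle $\partial f$ but $uv$ is not a boundary edge; take $u,v$ consecutive real vertices on one arc $P$ of $\partial f$ and draw a new copy of $uv$ inside the disk $f$, homotopic to $P$. It is crossing-free, so the drawing stays $k$-planar; it is non-homotopic to any existing $u$--$v$ edge (otherwise that edge and $P$ bound a vertex-free disk, and rerouting it along $P$, which lies on the boundary of the empty face $f$, either reduces crossings or already puts it on $\partial f$); and it cannot create an $F^k_p$, since any forbidden configuration in the new drawing must avoid the crossing-free new edge and hence already occur in $D$. So the enlarged drawing is $k$-planar and $F^k_p$-free, contradicting edge-maximality of $G$. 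For the ``therefore'': if $|V(f)|\ge 3$, three real vertices on $\partial f$ are pairwise joined by boundary edges, so $\partial f$ equals their triangle; and no edge of this triangle is crossed, since a crossing on it would force the crossing edge into the empty face $f$, so $f$ is a $3$-triangle.

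The step I expect to be the main obstacle is the bookkeeping in (c): showing that the two sides of a wedge are genuine single edges, ruling out every degenerate self-gluing, and getting the crossing counts precise enough that both the $2$-planar and $3$-planar budgets are exceeded or else an empty lens or a crossing-reducing reroute is forced. By comparison the configuration-preservation checks in (a) and (d) are routine once one has recorded that every edge of $F^2_5$, $F^2_6$, $F^3_6$ is crossed.
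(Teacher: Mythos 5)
Your proof follows the same route as the paper's: (a) is the standard segment-swap reducing the crossing number, with $F$-freeness preserved because every edge of $F^2_5$, $F^2_6$, $F^3_6$ is crossed at least twice while the swapped edges end up with at most $k-2\le 1$ crossings; (b) reduces to (a); (c) derives an empty lens (or a reroutable bigon) from a triangular wedge-neighbor; and (d) inserts a planar edge, which cannot create a forbidden configuration since those contain no planar edges. Your write-up is much more detailed than the paper's one-sentence argument for (c), and the extra care is appropriate.

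Two small repairs are needed. First, in (c) the claim that $\overline{a}$ being crossed $i+2$ times is ``impossible already for $i\ge 1$ under $2$- and $3$-planarity'' is false for $i=1$ in the $3$-planar case, where $i+2=3$ is allowed; but that configuration (a $0$-triangle $f_0$ and a $0$-triangle $f_1$ sharing the $0$-edge $e_0$) is exactly a vertex-free lens between two consecutive crossings of $\overline{a}$ and $\overline{b}$, so the lens argument you reserve for the ``remaining cases'' disposes of it as well; it is cleaner to route every case through the lens/bigon argument rather than the crossing count. Second, in (a) you define an empty lens to be a $2$-gon \emph{face} of $M(D)$, but in (c) you apply (a) to a region that contains the crossings with the rungs and hence is not a face. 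The swap argument works verbatim for any lens between two consecutive crossings of $\overline{a}$ and $\overline{b}$ whose interior contains no vertex of $D$ (the crossings of other edges with the two exchanged arcs are merely relabelled, so all crossing counts outside $\overline{a},\overline{b}$ are unchanged), and this broader notion is the one the paper's statement (a) intends; state and prove (a) at that level of generality so that (c) can legitimately cite it.
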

\begin{proof}
    \begin{enumerate}[(a)]     
        \item Since there are no two homotopic edges, there are no empty lenses with two vertices. Any other empty lens can be destroyed by swapping the segments of the edges of $D$ that define the empty lens (without creating one of the forbidden configurations). This reduces the number of crossings contradicting that $D$ is crossing-minimal.
        \item Loops and self-intersecting edges are forbidden, so there is no face $f \in F'$ with $\vert f \vert = 1$. Every face $f \in F'$ with $\vert f \vert = 2$ is an empty lens, which does not appear in $D$ by (a).
        \item Let $f$ be an arbitrary face. By definition, the face $f$ is never a 0-quadrilateral. If $\vert f \vert = 3$, then this would imply an empty lens.
        \item For an arbitrary face $f$, assume that no edge $ e= uv$ exists on the boundary of $f$. Therefore, we may insert $e$ contradicting that $D$ is maximally dense. By this, we cannot create one of the three forbidden configurations $F^2_5, F^2_6$ and $F^3_6$, since they do not contain planar edges.        
        This does not create homotopic edges as every other edge $e'=uv$ homotopic to $e$ would have been already on the boundary of $f$ or would have formed an empty lens with an edge of the boundary of $f$ contradicting (a).

        Assume now that a face $f$ with $\vert V(f) \vert > 2$ exists that is not a 3-triangle. Then we find three vertices in $V(D)$ on the boundary of $f$, which do not all appear next to each other. We introduce a new edge between two of them,  contradicting the maximality of $D$.
    \end{enumerate}
\end{proof}

In the following, we will use the \emph{discharging method}. See \cite{DBLP:journals/comgeo/Ackerman19, DBLP:journals/jct/AckermanT07, DBLP:conf/gd/BinucciBBDDHKLMT23, radoivcic2008discharging} for similar applications of this technique.
We define a \emph{charging function} $\ch: F' \rightarrow \mathbb{R}$ that assigns an \emph{initial charge} of
\begin{equation}\label{eq:initial-charge}
 \ch(f) = \vert f \vert + \vert V(f) \vert - 4
\end{equation}
to every face $f \in F'$. It is known that for the total charge $\sum_{f \in F'} \ch(f) = 4n-8$ holds (refer to~\cite{DBLP:journals/jct/AckermanT07}  for details). The challenge now is to redistribute the charge so that in the end every face $f \in F'$ has a charge of $\ch'(\cdot)$ that satisfies $\ch'(f) \geq \alpha \vert V(f) \vert$ for a suitable $\alpha > 0$, while the total charge does not change. From this and the observation that $\sum_{f \in F'} \vert V(f) \vert = \sum_{v \in V(D)} \deg(v) = 2m$ holds, we can derive an upper bound of
\begin{equation}\label{eq:modified-charge}
    m \leq \frac{2}{\alpha}(n-2)
\end{equation}
on the number of edges. For a given $\alpha$ and a face $f$ with charge $c$, we say that $\vert c- \alpha \vert V(f)\vert \vert$ is the \emph{demand} of $f$, if $c- \alpha \vert V(f)\vert$ is negative, otherwise we call it the \emph{excess} of $f$. If $f$ has no demand, then we also say that $f$ is satisfied.

\subsection{Proof and Discharging for Theorem \ref{th:density-2-planar-forbidden-configuration}}
\thdensitytwoplanar*

\begin{proof} We start with the bound of $\frac{13}{3}(n-2)$. Let $D$ be a 2-planar, $F^2_5$-free and $F^2_6$-free drawing that is maximally-dense-crossing-minimal. Assign to every face $f \in F'$ the initial charge $\ch(f)$ according to \cref{eq:initial-charge}. 
The initial charges are distributed in the following~way:
\begin{itemize}
    \item \textsf{Step 1}: Each 0-triangle receives $\frac{1}{3}$ charge from each of its wedge-neighbors.
    \item \textsf{Step 2}: Each 1-triangle receives $\frac{1}{26}$ charge from both 1-neighbors.
    \item \textsf{Step 3}: Each 1-triangle receives $\frac{5}{13}$ charge from its wedge-neighbor.
    \item \textsf{Step 4}: Each 2-quadrilateral contributes its excess to its wedge-neighbor.
    \item \textsf{Step 5}: 
    For each 2-triangle $f$, let $\mathcal{C}(f)$ be the inclusion-minimal planar cycle of $D$ enclosing $f$ (i.e. the planar cycle that does not contain other planar edges). Then $f$ distributes its excess equally over those faces that lie inside $\mathcal{C}(f)$ and have a demand.
\end{itemize}
Denote the charges after the $i$-th step by $\ch_i(\cdot)$. With this, we have $\ch'(\cdot) = \ch_5(\cdot)$. 

\begin{proposition}\label{pro:2-planar-charging}
    For all faces $f \in F'$, we have $\ch'(f) \ge \frac{6}{13} \vert V(f) \vert$.
\end{proposition}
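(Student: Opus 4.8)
The plan is to verify the inequality $\ch'(f) \ge \frac{6}{13}\vert V(f)\vert$ separately for each possible type of face $f$, classified by the pair $(\vert V(f)\vert, \vert f\vert)$, tracking how its charge changes through the five discharging steps. Recall that the initial charge is $\ch(f) = \vert f\vert + \vert V(f)\vert - 4$, so faces with $\vert f\vert + \vert V(f)\vert \ge 5$ start with nonnegative charge and the truly problematic faces are the triangles with few vertices: the $0$-triangles (initial charge $-1$), $1$-triangles (initial charge $0$), and to a lesser extent $2$-triangles (initial charge $1$, but target $\frac{12}{13}$, so a tiny excess) and the $0$-quadrilaterals (initial charge $0$, target $0$, so they must neither gain nor lose — consistent with the rules, since they only ever pass charge through as wedge-neighbor links, never as endpoints of a transfer). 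For each face type I would (i) compute the target $\frac{6}{13}\vert V(f)\vert$, (ii) identify which of Steps 1--5 cause $f$ to \emph{lose} charge and bound that loss from above, (iii) identify which steps cause $f$ to \emph{gain} charge and bound that gain from below, and (iv) check the net is at least the target.

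First I would dispose of the easy cases: any face with $\vert V(f)\vert \le 2$ and $\vert f\vert \ge 4$, or $\vert V(f)\vert = 3$ (which by \cref{pro:charging-helper}(d) must be a $3$-triangle with $\vert f\vert = 3$, hence initial charge $2$ and target $\frac{18}{13} < 2$ — and a $3$-triangle has no $0$-edges and no $0$-quadrilateral wedge-neighbor structure feeding it, so I must just check it loses little), need only a crude accounting of outgoing charge in Steps 1--3 and 5. A $k$-gon with $k \ge 5$ has enough slack to pay $\frac13$ per incident $0$-triangle wedge-link and $\frac{5}{13}$ per incident $1$-triangle wedge-link; here the key structural input is \cref{pro:charging-helper}(c), which guarantees that the wedge-neighbor of a $0$- or $1$-triangle is never a $0$-quadrilateral and has $\vert f\vert \ge 4$, so the charge demanded in Steps 1 and 3 is always pulled from a reasonably large face — and one must bound the number of such wedge-links incident to a single large face (at most one per $0$-edge, and the geometry of wedge-chains of length $\le 2$ limits this). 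Then the heart of the argument is the three genuinely tight cases. For a $0$-triangle: it needs to reach charge $0$ from $-1$, and Step 1 gives it $\frac13$ from each of its (up to) three wedge-neighbors — so I must argue a $0$-triangle has three distinct wedge-neighbors, each contributing, which uses $2$-connectedness and the no-empty-lens property. For a $1$-triangle: target $\frac{6}{13}$, starting from $0$; it collects $\frac{1}{26}$ from each of two $1$-neighbors ($= \frac{1}{13}$) in Step 2 and $\frac{5}{13}$ from its wedge-neighbor in Step 3, totalling exactly $\frac{6}{13}$ — so every inequality here must be checked to hold with equality and one must confirm a $1$-triangle genuinely has two $1$-neighbors and a well-defined wedge-neighbor that still has the charge to give.

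The main obstacle, as usual in discharging proofs, will be Step 4 and Step 5 — the clauses where $2$-quadrilaterals and $2$-triangles dump their (small) excess onto faces with demand — because these require a global argument that the recipients are correctly identified and that no face is left unsatisfied after the earlier steps drained the large faces. Specifically, after Steps 1--3 a large face that was the wedge-neighbor of many triangles might itself dip below its target; one must show that such a face is then a recipient in Step 4 or Step 5 (it lies inside the relevant minimal planar cycle $\mathcal{C}(f)$ of some $2$-triangle) and that the aggregate excess of the surrounding $2$-triangles and $2$-quadrilaterals suffices. This is where the $F^2_5$-freeness and $F^2_6$-freeness hypotheses must finally be used: they prevent the ``dense'' local configurations in which many triangles cluster around one face with no nearby reservoir of excess, so that the forbidden-configuration hypotheses translate into a guarantee that every planar cell of $D$ that is not essentially a bounded pentagon/hexagon carries enough $2$-gon excess. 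I would organize this final part by fixing a minimal planar cycle $\mathcal{C}$, enumerating the few possible internal structures consistent with $2$-planarity and the forbidden configurations, and checking a local charge balance inside each; the bound of $4$ added edges per pentagon (mentioned after the theorem statement) is exactly the extremal case, so all these local checks should come out to equality there and with slack otherwise. After \cref{pro:2-planar-charging} is established, \cref{eq:modified-charge} with $\alpha = \frac{6}{13}$ yields $m \le \frac{13}{3}(n-2)$, and the weaker $4.5(n-2)$ bound follows from the same scheme with the Step 5 handling of $F^2_6$ configurations omitted and $\alpha = \frac49$.
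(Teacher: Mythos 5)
Your overall framework --- a case analysis over face types $(\vert V(f)\vert,\vert f\vert)$ with local bookkeeping of gains and losses through the five steps --- is exactly the paper's, and your accounting for $0$-triangles, $1$-triangles, $2$-triangles, $3$-triangles and the generic large faces matches the intended proof. The genuine gap is in the one case where the numbers do not close by wedge-neighbor counting alone: the $0$-pentagon. Such a face starts with charge $1$ and, even after $F^2_5$-freeness rules out five $1$-triangle wedge-neighbors, it may still have four of them, leaving $\ch_3(f)\ge 1-4\cdot\frac{5}{13}=-\frac{7}{13}<0$. The paper closes this case not by a global argument over minimal planar cycles but by the local structural observation (which you never state) that every wedge-neighbor of a $0$-pentagon in a $2$-planar drawing is either a $1$-triangle or a $2$-quadrilateral: each boundary edge of the pentagon already carries two crossings, so its continuation must end at a vertex immediately, and \cref{pro:charging-helper}(d) then forces the $2$-quadrilateral. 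Hence some wedge-neighbor is a $2$-quadrilateral whose entire excess of $\frac{14}{13}$ is funneled to the pentagon in \textsf{Step 4}, giving $1+\frac{14}{13}-4\cdot\frac{5}{13}=\frac{7}{13}\ge 0$. Your proposal treats \textsf{Step 4} and \textsf{Step 5} as one undifferentiated ``global'' rescue phase; in fact \textsf{Step 4} is a targeted, purely local transfer whose correctness hinges on this classification of $0$-pentagon wedge-neighbors, and the cycle-enumeration machinery you describe is needed only for the single remaining case of a $1$-quadrilateral feeding two $1$-triangles, which is forced to sit inside a planar $5$-cycle whose interior $2$-triangles supply the missing $\frac{3}{13}$ in \textsf{Step 5}.

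Two smaller inaccuracies: $F^2_6$-freeness enters through the $0$-hexagon case (six $1$-triangle wedge-neighbors would form an $F^2_6$, so at most five contribute and $\ch_3\ge 2-5\cdot\frac{5}{13}=\frac{1}{13}$), not through \textsf{Step 5} as you suggest at the end; and the $4.5(n-2)$ bound is obtained by rescaling the \textsf{Step 2} and \textsf{Step 3} amounts to $\frac{1}{18}$ and $\frac{1}{3}$ so that a $0$-hexagon surrounded by six $1$-triangles (now permitted) lands exactly at $0$, not by dropping a \textsf{Step 5} clause.
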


\begin{proof}
We analyze the final charge $\ch'(\cdot)$ for all faces. Note that a face contributes through each edge of its boundary in \textsf{Step 1-3} at most once and the only contributing faces in \textsf{Step 1} are 2-quadrilaterals (see \cref{fig:density-2-planar-proof-a}) and in \textsf{Step 2} 2-triangles (see \cref{fig:density-2-planar-proof-b}).
\begin{figure}
    \begin{subfigure}[b]{0.24\linewidth}
    \center
    \includegraphics[width=\textwidth, page=1]{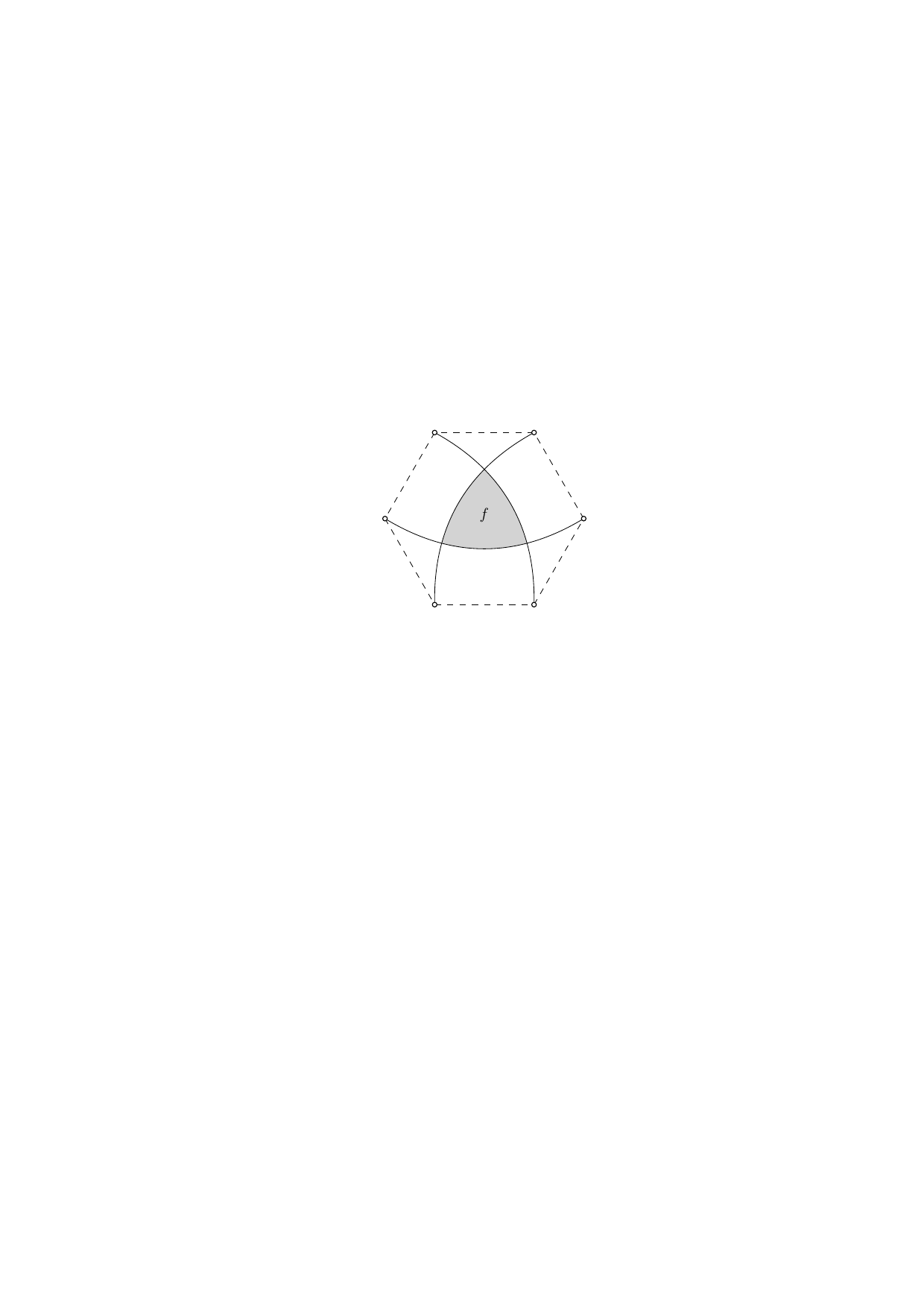}
    \subcaption{}
    \label{fig:density-2-planar-proof-a}
    \end{subfigure}  
    \hfill
    \begin{subfigure}[b]{0.24\linewidth}
    \center
    \includegraphics[width=\textwidth, page=2]{2planarCases.pdf}
    \subcaption{}
    \label{fig:density-2-planar-proof-b}
    \end{subfigure} 
    \hfill
    \begin{subfigure}[b]{0.24\linewidth}
    \center
    \includegraphics[width=\textwidth, page=3]{2planarCases.pdf}
    \subcaption{}
    \label{fig:density-2-planar-proof-c}
    \end{subfigure}
    \hfill
    \begin{subfigure}[b]{0.24\linewidth}
        \center
    \includegraphics[width=\textwidth, page=4]{2planarCases.pdf}
    \subcaption{}
    \label{fig:density-2-planar-proof-d}
    \end{subfigure} 
    \hfill
    \caption{Discharging for \cref{th:density-2-planar-forbidden-configuration}.
    Planar edges that exist by \cref{pro:charging-helper} are dashed.
    }
    \label{fig:density-2-planar-proof}
\end{figure}
Also $\ch_3(f) \ge \frac{6}{13} \vert V(f)\vert$ already implies $\ch'(f) \ge \frac{6}{13} \vert V(f)\vert$. Because of \cref{pro:charging-helper} there are only 3-triangles and faces $f$ with $\vert f \vert \ge 3$ and $\vert V(f)\vert \le 2$.
\begin{itemize}
    \item \emph{$f$ is a 0-triangle.} Then $f$ receives in \textsf{Step 1} in $3 \cdot \frac{1}{3}$ charge and never contributes charge. Therefore $\ch_3(f) = -1 + 1 = 0  \ge \frac{6}{13} \cdot 0$.
    \item \emph{$f$ is a 1-triangle.} Then $f$ receives in \textsf{Step 2} $2 \cdot \frac{1}{26}$ charge, in \textsf{Step 3} $\frac{5}{13}$ charge and never contributes charge. Therefore $\ch_3(f) =   0 + \frac{6}{13 }\ge \frac{6}{13} \cdot 1$.
    \item \emph{$f$ is a 2-triangle.} Then $f$ starts with $1$ charge and contributes in \textsf{Step 2} at most $2 \cdot \frac{1}{26}$ charge. Therefore $\ch_3(f) \ge 1 - \frac{1}{13} = \frac{12}{13} \ge \frac{6}{13} \cdot 2$.
    \item \emph{$f$ is a 3-triangle.} Then $f$ never receives or contributes charge. Thus $\ch_3(f) = 2 \ge \frac{6}{13} \cdot 3$.
    \item \emph{$f$ is a 0-quadrilateral.} Then $f$ starts with $0$ charge and never receives or contributes charge as it cannot be the wedge-neighbor of another face. Therefore $\ch_3(f) = 0 \ge \frac{6}{13} \cdot 0$.
    \item \emph{$f$ is a 1-quadrilateral.} Then $f$ starts with 1 charge. If $f$ contributes in \textsf{Step 3} to less than two 1-triangles, we have $\ch_3(f) \ge 1 - \frac{5}{13} = \frac{8}{13} \ge \frac{6}{13} \cdot 1$. Otherwise, we know that $f$ is bounded by a 5-cycle of planar edges (\cref{fig:density-2-planar-proof-c}). Here, charges do not change in \textsf{Step 4}, but we can find $\frac{3}{13}$ charge from the excesses of 2-triangles in this 5-cycle and move that to $f$ in \textsf{Step 5}. Therefore, we have $\ch'(f) = 1 - 2 \cdot \frac{5}{13} + \frac{3}{13} = \frac{6}{13} \ge \frac{6}{13} \cdot 1$.
    \item \emph{$f$ is a 2-quadrilateral.} Then $f$ has one wedge-neighbor, to which it contributes either $\frac{1}{3}$ charge in \textsf{Step 1} or $\frac{5}{13}$ charge in \textsf{Step 3}. So we have $\ch_3(f) \ge 2 - \frac{5}{13} = \frac{21}{13} \ge \frac{6}{13} \cdot 2$ 
    \item \emph{$f$ is a 0-pentagon.} Note that all wedge-neighbors of $f$ are 1-triangles or 2-quadrilaterals, as otherwise there would be an edge with three crossings or a face with two real vertices that are not connected by an edge. If $f$ contributes to five 1-triangles in \textsf{Step 3}, then we would have an $F^2_5$ configuration, which is forbidden.
    Otherwise, at least one 2-quadrilateral contributes its excess of $\frac{14}{13}$ to $f$ in \textsf{Step 4} (see \cref{fig:density-2-planar-proof-d}). Therefore we have $\ch_4(f) \ge 1 + \frac{14}{13} - 4 \cdot \frac{5}{13} = \frac{7}{13}\ge \frac{6}{13} \cdot 0$.
    \item \emph{$f$ is a 1-pentagon or a 2-pentagon resp.} Then $f$ contributes to at most three or two 1-triangles resp.~in \textsf{Step 3}. Therefore, we have $\ch_3(f) \ge 2 - 3 \cdot \frac{5}{13} = \frac{11}{13} \ge \frac{6}{13} \cdot 1$ resp. $\ch_3(f) \ge 3 - 2 \cdot \frac{5}{13} = \frac{29}{13} \ge \frac{6}{13} \cdot 2$.
    \item \emph{$f$ is a 0-hexagon.} If $f$ contributes to six 1-triangles in \textsf{Step 3}, then we would have an $F^2_6$ configuration, which is forbidden. Otherwise, we have $\ch_3(f) \ge 2 - 5 \cdot \frac{5}{13} = \frac{1}{13} \ge \frac{6}{13} \cdot 0$.
    \item \emph{$f$ is a 1-hexagon resp. 2-hexagon.} Then $f$ contributes to at most four resp. three 1-triangles in \textsf{Step 3} and we have $\ch_3(f) \ge 3 - 4 \cdot \frac{5}{13} = \frac{19}{13} \ge \frac{6}{13} \cdot 2$.
    \item \emph{$f$ is a face with $\vert f \vert \ge 7$.} Then $f$ may contribute charge to at most $\vert f \vert$ wedge-neighbors in \textsf{Step 3}. Therefore $\ch_3(f) \ge \vert f \vert + \vert V(f) \vert - 4 - \frac{5}{13} \cdot \vert f \vert \ge \frac{8}{13} \cdot 7 + \vert V(f) \vert - 4 \ge \frac{6}{13} \vert V(f) \vert$.
\end{itemize}
Therefore, all faces $f \in F'$ are satisfied, which proves the proposition.
\end{proof}

Combining \cref{pro:2-planar-charging} and \cref{eq:modified-charge}, $m \le 2 \cdot \frac{13}{6}(n-2)$ is implied, as claimed.

For drawings, where $F^2_6$ configurations are allowed, we can use similar discharging steps to prove the bound of $4.5(n-2)$ on the number of edges. Here we set $\alpha = \frac{4}{9}$, and therefore 1-triangles can receive $\frac{1}{18}$ charge from both its 1-neighbors each in \textsf{Step 2} without creating a demand for any 2-triangles. Therefore, faces have to contribute in \textsf{Step 3} only $\frac{1}{3}$ charge to satisfy all 1-triangles.
Now let $f$ be a 0-hexagon that is the wedge-neighbor of six 1-triangles. Starting with 2 charge, it contributes at most $6 \cdot \frac{1}{3}$ in \textsf{Step 3}, and therefore ends with $0 \ge \frac{4}{9}\cdot 0$ charge. For all other faces we still have enough charge with the same analysis as above.

Therefore, there exists a function $\ch'(\cdot)$ satisfying $\ch'(f) \ge \frac{4}{9} \vert V(f) \vert$ for all $f \in F'$, while the total amount of charge is still $4n-8$. By \cref{eq:modified-charge} we get $m \le 2 \cdot \frac{9}{4}(n-2)$. 
\end{proof}

\subsection{Proof and Discharging for Theorem \ref{th:density-3-planar-forbidden-configuration}}
\thdensitythreeplanar*

\begin{proof}
Let $D$ be a 3-planar $F^3_6$-free drawing that is maximally-dense-crossing-minimal.
As in the proof of \cref{th:density-2-planar-forbidden-configuration}, we assign the initial charges $\ch(f)$ to the faces of $M(D)$ and redistribute them to achieve a function $\ch'(\cdot)$.
The discharging takes place in seven steps:
\begin{itemize}
    \item \textsf{Step 1}: Each 0-triangle receives 1 charge from each 0-neighbor that is a 2-quadrilateral.
    \item \textsf{Step 2}: Each 0-triangle with a demand receives $\frac{1}{3}$ charge from all wedge-neighbors.
    \item \textsf{Step 3}: Each 2-triangle distributes its excess equally over all 1-neighbors that are 1-triangles.
    \item \textsf{Step 4}: Each 1-triangle receives its demand from its wedge-neighbor.
    \item \textsf{Step 5}: Each face distributes its excess equally over the wedge-neighbors that are 0-pentagons, but at most $0.3$ to each of them, and keeps the rest.
    \item \textsf{Step 6}: Each face distributes its excess equally over all vertex-neighbors that are 0-quadrilaterals or 0-pentagons.
    The 0-quadrilaterals distribute this charge equally over their 0-neighbors that have a demand.
    \item \textsf{Step 7}: For each face $f$, let $\mathcal{C}(f)$ be the inclusion-minimal planar cycle of $D$ enclosing $f$ (i.e. the planar cycle that does not contain other planar edges). Then $f$ distributes its excess equally over those faces that lie inside $\mathcal{C}(f)$ and have a demand.
\end{itemize}
Again, we denote by $\ch_i(\cdot)$ the charges after the $i$-th step and by $\ch'(\cdot)$ the final charges. 
Our goal is to show $\ch'(f) \ge 0.4 \vert V(f) \vert$ for all faces $f \in F'$. Note that this is already implied by $\ch_4(f) \ge 0.4 \vert V(f) \vert$, as in \textsf{Step 5-7} faces contribute only their excesses.
We structure the proof into several propositions, collecting statements about the discharging steps.

\begin{proposition}\label{pro:steps-a}
    After \textsf{Step 2}, 0-triangles are and remain satisfied.
\end{proposition}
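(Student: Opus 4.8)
The plan is to track the charge of a single $0$-triangle $f$ through the seven discharging steps. Since $\vert f\vert = 3$ and $\vert V(f)\vert = 0$, its initial charge is $\ch(f) = 3 + 0 - 4 = -1$; and because $\alpha = 0.4$ in this proof, being \emph{satisfied} for $f$ simply means having nonnegative charge. The three boundary edges of $f$ are all $0$-edges, so each of them has a well-defined $0$-neighbor and a well-defined wedge-neighbor, and by \cref{pro:charging-helper}(c) each wedge-neighbor of $f$ has size at least $4$ and is not a $0$-quadrilateral; hence all transfers involving $f$ in \textsf{Steps 1} and \textsf{2} are legitimate.

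Next I would split into two cases according to whether some $0$-neighbor of $f$ is a $2$-quadrilateral. If it is, then $f$ gains at least $1$ unit of charge in \textsf{Step 1}, so $\ch_1(f) \ge 0$, and $f$ then has no demand and is untouched by \textsf{Step 2}. If no $0$-neighbor of $f$ is a $2$-quadrilateral, then $f$ receives nothing in \textsf{Step 1}, so $\ch_1(f) = -1$ and $f$ has a demand; hence in \textsf{Step 2} it collects $\frac{1}{3}$ of charge across each of its three boundary edges, for a total of $1$, giving $\ch_2(f) = -1 + 1 = 0$. In either case $\ch_2(f) \ge 0$, so $f$ is satisfied after \textsf{Step 2}.

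Finally I would verify that $f$ can never become unsatisfied again. It is not a $2$-triangle, so it sends nothing in \textsf{Step 3}, and by \cref{pro:charging-helper}(c) it is not the wedge-neighbor of any $1$-triangle, so it sends nothing in \textsf{Step 4}; a $0$-triangle is a recipient in neither step. In \textsf{Steps 5}, \textsf{6}, \textsf{7} every face only ever passes on (part of) its current excess over $0.4\,\vert V(f)\vert = 0$, so $f$'s charge stays $\ge 0$; and since $f$ is neither a $0$-pentagon, nor a $0$-quadrilateral, nor a face with a demand, it is not a recipient in \textsf{Steps 5, 6, 7} respectively. Thus $\ch'(f) \ge 0 = 0.4\,\vert V(f)\vert$, as claimed.

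The argument is essentially bookkeeping; the only point that needs care is that the three $\frac{1}{3}$-transfers in \textsf{Step 2} must be counted one per boundary edge of $f$ (so that they really sum to $1$, regardless of whether two of the wedge-neighbors coincide), together with the observation that a $0$-triangle is never simultaneously a \textsf{Step 1} and a \textsf{Step 2} recipient, since a $2$-quadrilateral $0$-neighbor already cancels its demand. The genuinely delicate half of legalizing \textsf{Step 2} --- that the wedge-neighbors can actually afford these contributions --- concerns the donor faces and is handled by the later propositions, so it plays no role here.
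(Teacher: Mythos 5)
Your argument is correct and follows the same route as the paper's proof: a $0$-triangle starts at $-1$, is brought to $0$ either by a $2$-quadrilateral $0$-neighbor in \textsf{Step 1} or by the three $\frac{1}{3}$-contributions in \textsf{Step 2}, and never gives charge away afterwards since it is not a $2$-triangle, is never a wedge-neighbor of a $1$-triangle by \cref{pro:charging-helper}(c), and \textsf{Steps 5--7} only move excess. Your version is just a more carefully bookkept rendering of the paper's argument.
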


\begin{proof}
    Let $f$ be a 0-triangle. We have $\ch(f) = -1$. If $f$ receives in \textsf{Step 1} charge, then $\ch_1(f) = 0$. Otherwise, $f$ receives $3 \cdot \frac{1}{3}$ charge in \textsf{Step 2}, so $\ch_2(f)=0$. 0-triangles do not contribute charge in \textsf{Step 3-4}, since they are not wedge-neighbors of 1-triangles. Therefore, $\ch'(f) \ge 0.4 \cdot 0$ holds.
\end{proof}

\begin{proposition}\label{pro:steps-b}
   In \textsf{Step 1-2}, 0-faces contribute no charge.
\end{proposition}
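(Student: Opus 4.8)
The plan is to treat Steps 1 and 2 separately. Step 1 needs no work: by construction the only faces that contribute charge in Step 1 are the $2$-quadrilaterals that happen to be $0$-neighbors of a $0$-triangle, and a $2$-quadrilateral $f$ has $\vert V(f)\vert = 2$, so it is not a $0$-face. Hence the whole statement reduces to Step 2, whose only contributors are the wedge-neighbors of those $0$-triangles that still carry a demand after Step 1. So it suffices to prove the following: if a $0$-face $g$ is the wedge-neighbor of a $0$-triangle $f_0$, then $f_0$ already has a $2$-quadrilateral $0$-neighbor — whence, by Step 1, $f_0$ has no demand and $g$ contributes nothing to it in Step 2.

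To set up the local picture, write $f_0$ as the triangular cell bounded by segments of three pairwise-crossing edges $a,b,c$ of $D$ with corners $x=a\cap b$, $y=b\cap c$, $z=c\cap a$, and assume the wedge is taken at the side $e_0\subseteq a$ joining $x$ and $z$. The wedge chain is $f_0,f_1,\dots,f_k=g$, with $f_1,\dots,f_{k-1}$ all $0$-quadrilaterals. The key observation is that the two lateral edges $b$ and $c$ of $f_0$ are each already crossed twice ($b$ at $y$ and $x$, $c$ at $y$ and $z$), so, $D$ being $3$-planar, each has at most one further crossing; this makes the chain very short.

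Concretely, if $k\ge 2$ then $f_1$ is a $0$-quadrilateral whose four sides lie on $a$, $b$, some edge $d_1$ and $c$; in particular $b$ picks up its third (and last) crossing $x_1$ on $\partial f_1$, so the arc of $b$ beyond $x_1$ is crossing-free and ends at a real endpoint $v_b$ of $b$. Inspecting the cyclic order at $x_1$, the face $f_2$ lying across the side of $f_1$ opposite $e_0$ has this crossing-free arc on its boundary, so $v_b\in V(f_2)$; thus $f_2$ is neither a $0$-quadrilateral nor a $0$-face, forcing $k=2$ and $g=f_2$, contradicting that $g$ is a $0$-face. Hence $k=1$, i.e.\ $g=f_1$ is the cell immediately across $e_0$; by \cref{pro:charging-helper}(c) we have $\vert g\vert\ge 4$ and $g$ is not a $0$-quadrilateral, so (being assumed a $0$-face) $g$ is a $0$-$p$-gon with $p\ge 5$. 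Tracing $\partial g$ from $x$ along $b$ and from $z$ along $c$, the next boundary vertices must be crossings (as $g$ has no real vertex), so $b$ and $c$ are crossed exactly three times, with orders $y,x,x_1$ along $b$ and $y,z,z_1$ along $c$. Then the arcs of $b$ and $c$ beyond $y$ are crossing-free to real endpoints, and on at least one of the two sides of the segment $xz$ the edge $a$ is likewise crossing-free out to a real endpoint. Feeding this into the local description at the corners $x,y,z$ (and using \cref{pro:charging-helper}(d) to close up the boundary), the face of $f_0$ across side $xy$ — or across $yz$, according to which side of $a$ is the crossing-free one — is a quadrilateral with two real vertices, i.e.\ a $2$-quadrilateral and a $0$-neighbor of $f_0$; so $f_0$ has no demand, the desired contradiction.

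I expect this final case ($g=f_1$ a $0$-$p$-gon, $p\ge 5$) to be the main obstacle: it requires a careful reading of the cyclic order of the four edge-arcs at each corner of $f_0$ to see which face sits across which side, a short sub-case distinction on the location (if any) of the third crossing of $a$, and one degenerate sub-case — the two real endpoints produced might coincide — where one falls back on \cref{pro:charging-helper}(a) and crossing-minimality to rule out an empty lens.
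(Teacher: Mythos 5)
Your argument is correct and essentially reproduces the paper's own proof: the 0-face across the wedge forces third crossings on two of the triangle's edges, the third edge must then be crossing-free on at least one side, and the resulting 2-quadrilateral 0-neighbor already satisfies the 0-triangle in \textsf{Step 1}, so nothing is contributed in \textsf{Step 2}. Your additional care about wedge-chains of length at least two and about the two real endpoints coinciding only makes explicit what the paper's shorter argument (and \cref{fig:easy-faces-a}) leaves implicit.
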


\begin{proof}
    No faces except 2-quadrilaterals contribute charge in \textsf{Step 1}, so we consider only \textsf{Step 2}. Assume that a 0-face $f_0$ contributes charge to a 0-triangle $f$ in \textsf{Step 2}, and $f_0$ and~$f$ are therefore 0-neighbors at an edge $e_0$. Let $e_1, e_2$ be the other edges of $f$ and $f_1, f_2$ the 0-neighbors at these edges (see \cref{fig:easy-faces-a}).
    Since $f_0$ is a 0-face, it is incident to two crossings each with $\overline{e}_1$ and $\overline{e}_2$ and these edges also cross each other at $f$. Therefore $\overline{e}_1$ and $\overline{e}_2$ have already three crossings and end at $f_1$ resp.\ $f_2$. The edge $\overline{e}_0$ ends also at one of $f_1$ or $f_2$, as otherwise it would have four crossings. W.l.o.g.\ $e_0$ ends at $f_1$ and by \cref{pro:charging-helper} $f_1$~is a 2-quadrilateral. Hence, $\ch_1(f) \ge 0.4 \cdot \vert V(f) \vert $, contradicting that $f$ receives charge later.
    \begin{figure}
        \begin{subfigure}[b]{0.24\linewidth}
        \center
        \includegraphics[width=\textwidth, page=1]{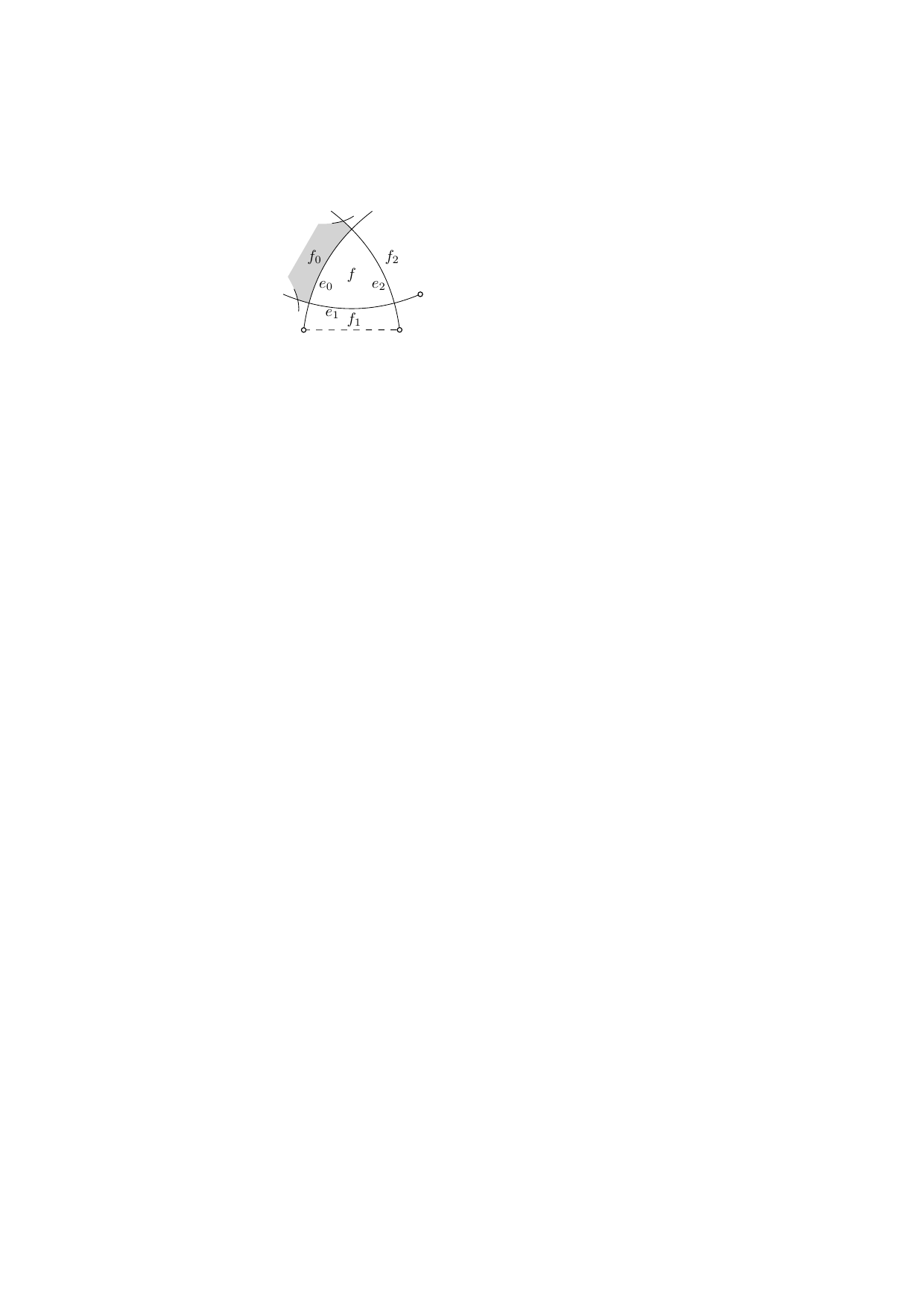}
        \subcaption{}
        \label{fig:easy-faces-a}
        \end{subfigure}  
        \hfill
        \begin{subfigure}[b]{0.24\linewidth}
        \center
        \includegraphics[width=\textwidth, page=2]{3planarCases.pdf}
        \subcaption{}
        \label{fig:easy-faces-b}
        \end{subfigure} 
        \hfill
        \begin{subfigure}[b]{0.24\linewidth}
        \center
        \includegraphics[width=\textwidth, page=3]{3planarCases.pdf}
        \subcaption{}
        \label{fig:easy-faces-c}
        \end{subfigure}
        \hfill
        \begin{subfigure}[b]{0.24\linewidth}
        \center
        \includegraphics[width=\textwidth, page=4]{3planarCases.pdf}
        \subcaption{}
        \label{fig:easy-faces-d}
        \end{subfigure} 
        \hfill
        \caption{Illustrations for the proofs of \cref{pro:steps-b}, \cref{pro:steps-c} and \cref{pro:steps-d}.}
        \label{fig:easy-faces}
    \end{figure}
\end{proof}

\begin{proposition}\label{pro:steps-c}
    After \textsf{Step 3}, 1-triangles have a demand of at most $0.3$ charge.
\end{proposition}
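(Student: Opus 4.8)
The plan is to reduce the statement to a single local fact --- that every 1-triangle has a 1-neighbour that is a 2-triangle --- after first pinning down the relevant charges.

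A 1-triangle $f$ has $\ch(f)=3+1-4=0$, and since the target is $\ch'(f)\ge 0.4\,|V(f)|=0.4$, the claim ``demand at most $0.3$ after \textsf{Step 3}'' is exactly $\ch_3(f)\ge 0.1$. A 1-triangle is not a wedge-neighbour of a 0-triangle, because by \cref{pro:charging-helper}(c) such wedge-neighbours have at least four edges; as 0-triangles are the only faces receiving charge in \textsf{Step 1} and \textsf{Step 2}, a 1-triangle neither gives nor receives charge there, so $\ch_2(f)=0$, and it gives nothing in \textsf{Step 3} either (only 2-triangles do). So I only need $f$ to receive at least $0.1$ in \textsf{Step 3}. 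Symmetrically, a 2-triangle starts with $\ch=3+2-4=1$, has no 0-edge, and is not a wedge-neighbour of a 0-triangle, so it is untouched by \textsf{Step 1} and \textsf{Step 2} and enters \textsf{Step 3} with excess exactly $1-0.4\cdot 2=0.2$; it has exactly two 1-edges, hence at most two 1-neighbours, so each 1-triangle among them gets at least $0.1$ from it.

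It thus remains to show that a 1-triangle $f$, say with real vertex $v$, crossings $c_1,c_2$, 1-edges $vc_1,vc_2$ and 0-edge $c_1c_2$ lying on a real edge $b$, has a 2-triangle as a 1-neighbour. Let $g_i$ be the face on the far side of $vc_i$; by 2-connectivity its boundary is a simple cycle, it carries the real vertex $v$ and the crossing $c_i$ as corners, so by \cref{pro:charging-helper}(d) it has at most two real vertices. Following the boundary of $g_1$ from the edge $vc_1$, it continues past $c_1$ along $b$ (the side of $c_1$ away from the segment $c_1c_2$) to the next vertex $z_1$ of $M(D)$. If $z_1$ is a real vertex, then $g_1$ carries the two real vertices $v,z_1$, so by \cref{pro:charging-helper}(d) the edge $vz_1$ bounds it and its boundary collapses to the triangle $(v,c_1,z_1)$ --- a 2-triangle. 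If $z_1$ is a crossing, then $b$ is crossed at $z_1,c_1,c_2$, which is the maximum under 3-planarity (the segment $c_1c_2$ is crossing-free, being an edge of $f$), so on the other side of $c_2$ the edge $b$ runs straight to a real endpoint $z_2$; applying the same argument to $g_2$ makes $g_2$ a 2-triangle. Either way $f$ has a 2-triangle 1-neighbour, and with the bookkeeping above this yields $\ch_3(f)\ge 0.1$.

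The charge accounting is routine; the real content is that structural claim, and the main thing to be careful with is the local picture at $c_1,c_2$ --- identifying which of the four corners at $c_i$ is $g_i$ and checking that the boundary of $g_i$ really runs along $b$ --- together with verifying that \cref{pro:charging-helper}(d) is strong enough to collapse $g_i$ to a triangle once two real vertices sit on it; the 3-planar bound on the crossings of $b$ is exactly what forces at least one of $z_1,z_2$ to be a real vertex. A picture in the style of \cref{fig:easy-faces} handling the two sub-cases should make it transparent, and the same reasoning incidentally rules out the degenerate possibility $g_1=g_2$.
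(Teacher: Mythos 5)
Your proof is correct and follows essentially the same route as the paper's: use the 3-planarity of the edge $\overline{e}$ containing the 0-edge of the 1-triangle to force one of its endpoints into a 1-neighbor, invoke \cref{pro:charging-helper}(d) to collapse that 1-neighbor to a 2-triangle, and observe that this 2-triangle's excess of $0.2$ is split among at most two 1-triangles in \textsf{Step 3}. Your version merely spells out the case distinction ($z_1$ real vs.\ crossing) and the \textsf{Step 1--2} bookkeeping that the paper leaves implicit.
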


\begin{proof}
    Let $f$ be a 1-triangle with the real vertex $v$ and the 0-edge $e$. Let further $f_1, f_2$ be the 1-neighbors of $f$ (see \cref{fig:easy-faces-b}). Then $\overline{e}$ ends at one of $f_1$ and $f_2$, as otherwise it would have more than three crossings. W.l.o.g. let $f_1$ be that face with the vertex $v'$ to which $\overline{e}$ is incident. Then by \cref{pro:charging-helper} the edge $vv'$ exists and $f_1$ is a 2-triangle. Therefore, $f_1$ starts with 1 charge and has an initial excess of $0.2$. Thus, $f$ receives $0.1$ charge in \textsf{Step 3}. We have $\ch_3(f) = 0.1$, which is equivalent to a demand of $0.3$.
\end{proof}

\begin{proposition}\label{pro:steps-d}
    After \textsf{Step 4}, all 1-quadrilaterals are satisfied.
\end{proposition}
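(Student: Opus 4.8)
The plan is to follow the charge of one $1$-quadrilateral $f$ through the first four discharging steps. Since its real vertex gives $|V(f)|=1$, its initial charge is $\ch(f)=4+1-4=1$, and it suffices to prove $\ch_4(f)\ge 0.4$, because faces only give away surplus in the later steps. First I would note that $f$ is neither a $0$-triangle nor a $1$-triangle, so it receives no charge in any of the first four steps, and that it can lose charge only as a wedge-neighbor: in \textsf{Step 2} it sends $\tfrac13$ to each demanding $0$-triangle of which it is a wedge-neighbor, and in \textsf{Step 4} it sends, by \cref{pro:steps-c}, at most $0.3$ to each $1$-triangle of which it is a wedge-neighbor. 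The two edges of $f$ incident to its real vertex are $1$-edges, so $f$ has exactly two $0$-edges; since the chain of $0$-quadrilaterals running back from a $0$-edge of $f$ to a triangle is uniquely determined, $f$ is the wedge-neighbor of at most one triangle across each $0$-edge, hence of at most two triangles altogether.

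Two of the resulting cases are then immediate. If $f$ is the wedge-neighbor of at most one triangle, it loses at most $\tfrac13$, so $\ch_4(f)\ge\tfrac23$. If $f$ is the wedge-neighbor of two triangles and both are $1$-triangles, it loses at most $2\cdot 0.3=0.6$, so $\ch_4(f)\ge 0.4$. This leaves the case that $f$ is the wedge-neighbor of two triangles $t_1,t_2$ where (at least) $t_1$ is a $0$-triangle. The plan here is to show that such a $t_1$ cannot have a demand: it must already have been filled up in \textsf{Step 1} by a $2$-quadrilateral $0$-neighbor. Granting this, $f$ sends nothing to $t_1$ and at most $\max\{\tfrac13,0.3\}=\tfrac13$ to $t_2$, so once more $\ch_4(f)\ge\tfrac23\ge 0.4$, which finishes the proof.

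For the sub-claim I would argue locally. Write the boundary of $f$ as $v,x_1,x_2,x_3$ with $v$ the real vertex, so the two $0$-edges are $x_1x_2$ (lying on an edge $\beta$ of $D$) and $x_2x_3$ (on an edge $\gamma$), meeting at the crossing $x_2$, and suppose $f$ is the wedge-neighbor of $t_1$ across $x_1x_2$. Using $3$-planarity to bound the wedge chain and to locate the apex of $t_1$ (its vertex opposite the $0$-edge, which sits on the far side of $\beta$ from $f$), I would inspect the two $0$-neighbors of $t_1$ other than the one continuing the chain towards $f$. The heart of the matter is that the wedge "closes up" at the real vertex $v$: combining this with the edge-maximality of $D$ and \cref{pro:charging-helper}(a) and (d), one of those two neighbors of $t_1$ is forced to be a quadrilateral incident to two real vertices, i.e.\ a $2$-quadrilateral, so $t_1$ indeed has no demand. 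If the chain from $t_1$ to $f$ passes through intermediate $0$-quadrilaterals, each of them is also a wedge-neighbor of $t_1$ and the same analysis applies.

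The main obstacle is precisely this last point: ruling out that a $1$-quadrilateral is simultaneously the wedge-neighbor of a \emph{demanding} $0$-triangle and of a second triangle. This is a rigidity statement about the drawing near the real vertex $v$, and pinning it down needs a careful case distinction on how the two wedges fit together around $v$ together with a genuine use of crossing-minimality and edge-maximality through \cref{pro:charging-helper}; the remaining charge arithmetic is bookkeeping.
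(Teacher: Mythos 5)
Your setup (initial charge $1$, losses only as a wedge-neighbor across the two $0$-edges, hence at most two recipients) and your first two cases match the paper. The gap is in the mixed case, where $f$ is the wedge-neighbor of a $0$-triangle $t_1$ and a $1$-triangle $t_2$: your sub-claim that $t_1$ must be satisfied after \textsf{Step 1} is not forced there. The rigidity you invoke really only materializes when \emph{both} wedges of $f$ end in $0$-triangles: in that situation each of the two edges of $D$ underlying the $0$-edges of $f$ picks up its third crossing on the far side of the shared crossing of $f$, so their remaining segments run crossing-free to real endpoints and \cref{pro:charging-helper}(d) closes up the required $2$-quadrilateral $0$-neighbors of $t_1$ and $t_2$ (this is exactly \cref{fig:easy-faces-c}). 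In the mixed case, however, the edge carrying the $0$-edge of $f$ towards $t_2$ simply ends at the real vertex of $t_2$ right after leaving $f$, so it is \emph{not} forced to be fully crossed; it may still have a spare crossing on the far side of $c_1$, and likewise the edge through $c_1$ and the apex of $t_1$ may have a spare crossing beyond that apex. Either spare crossing destroys the corresponding candidate $2$-quadrilateral, and with both present $t_1$ has no $2$-quadrilateral $0$-neighbor at all, so it genuinely draws $\frac13$ from $f$ in \textsf{Step 2}. Your accounting then gives only $1-\frac13-0.3=\frac{11}{30}<0.4$.

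The paper closes this case from the other end: it keeps the $\frac13$ paid to the $0$-triangle but shows that in this configuration the $1$-triangle $t_2$ needs only $0.2$ from $f$ in \textsf{Step 4}. The reason is that the $2$-triangle which is a $1$-neighbor of $t_2$ (guaranteed as in \cref{pro:steps-c}) has, here, no second $1$-triangle among its $1$-neighbors --- its other $1$-neighbor contains the real vertex $v$ of $f$ and is itself a $2$-triangle by \cref{pro:charging-helper}(d) --- so in \textsf{Step 3} it donates its entire excess of $0.2$ to $t_2$ alone, leaving $t_2$ with demand $0.2$ and $\ch_4(f)\ge 1-\frac13-0.2\ge 0.4$. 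So your proof needs this additional local argument (or some substitute) in the mixed case; as written, the case you correctly flag as the main obstacle is not resolved.
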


\begin{proof}
    Let $f$ be a 1-quadrilateral. We have $\ch(f)=1$ and $f$ contributes charge only in \textsf{Step 2} and \textsf{Step 4}. If $f$ contributes to at most one wedge-neighbor or to two 1-triangles, then $\ch_4(f) \ge 1-0.6 \ge 1 \cdot 0.4$. Otherwise, $f$ contributes either to two wedge-neighbors that are both 0-triangles or to one 0-triangle and one 1-triangle.
    In the first case, both 0-triangles are already satisfied after \textsf{Step 1}, as they have wedge-neighbors that are 2-quadrilaterals (see \cref{fig:easy-faces-c}). In the second case, $f$ contributes not more than $0.2$ charge to the 1-triangle $f'$, because one of its 1-neighbors is a 2-triangle contributing its excess of $0.2$ charge only to $f'$ in \textsf{Step 3} (see \cref{fig:easy-faces-d}). Therefore, we have $\ch_4(f) \ge 1- \frac{1}{3}-0.2 \ge 1 \cdot 0.4$.
\end{proof}

\begin{proposition}\label{pro:steps-e}
    After \textsf{Step 4}, all faces are and remain satisfied that are not 0-pentagons that are the wedge-neighbor of four or five 1-triangles.
\end{proposition}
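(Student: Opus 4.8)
The plan is an exhaustive case distinction over the possible face types, establishing $\ch_4(f)\ge 0.4\,\vert V(f)\vert$ for every face $f$ outside the exceptional set; the ``remain satisfied'' clause then follows at once, since in \textsf{Step 5--7} every face only forwards (part of) its current excess, so any $f$ with $\ch_4(f)\ge 0.4\,\vert V(f)\vert$ retains this inequality.

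First I would record two structural observations. By \cref{pro:charging-helper}(d), every face other than a $3$-triangle satisfies $\vert V(f)\vert\le 2$, and reading off the boundary cycle of a face (using, for a $2$-face, that its two real vertices are consecutive on the boundary, again by \cref{pro:charging-helper}(d)) shows that a $0$-$s$-gon has $s$ $0$-edges, a $1$-$s$-gon has $s-2$, a $2$-$s$-gon has $s-3$, and a $3$-triangle has none. The crucial accounting observation is then that across any single $0$-edge a face gives away at most $\frac{1}{3}$ charge in \textsf{Step 1--4} combined, with the sole exception of a $2$-quadrilateral, which may give up to $1$ through its unique $0$-edge: a fixed $0$-edge $e$ of $f$ determines a unique wedge chain ending at $f$ and hence a unique face $g$ at its far end, so $f$ gives charge across $e$ only if $g$ is a $0$-triangle with a demand ($\frac{1}{3}$, in \textsf{Step 2}) or a $1$-triangle (at most $0.3$, in \textsf{Step 4}, by \cref{pro:steps-c}); these cases are mutually exclusive, and the only further way of losing charge across $e$ is \textsf{Step 1}, which concerns only $2$-quadrilaterals. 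In addition, \cref{pro:steps-b} guarantees that a $0$-face gives nothing in \textsf{Step 1--2}, so a $0$-face loses charge only through \textsf{Step 4}, at most $0.3$ per $0$-edge, while a $0$-quadrilateral loses nothing at all, being never a wedge-neighbor by \cref{pro:charging-helper}(c).

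With these facts in hand the rest is bookkeeping. The only faces that receive charge in \textsf{Step 1--4} are $0$-triangles and $1$-triangles; the former are \cref{pro:steps-a}, the latter are settled by \cref{pro:steps-c} together with \textsf{Step 4} (their wedge-neighbor exists and is not a $0$-quadrilateral by \cref{pro:charging-helper}(c)), so they end with charge $0$ and $0.4$ respectively, and $1$-quadrilaterals are \cref{pro:steps-d}. Every remaining face only loses charge, so writing $d_0$ for its number of $0$-edges and using $\ch(f)=\vert f\vert+\vert V(f)\vert-4$ we get $\ch_4(f)\ge \ch(f)-\frac{d_0}{3}$ (respectively $\ch(f)-0.3\,d_0$ for a $0$-face, $\ch(f)$ for $0$-quadrilaterals and $3$-triangles, and $\ch(f)-0.2$ for a $2$-triangle, which forwards only its excess in \textsf{Step 3}). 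Evaluating: $2$-triangle $0.8$, $3$-triangle $2$, $0$-quadrilateral $0$, $2$-quadrilateral $1$, $1$-pentagon $1$, $2$-pentagon $\frac{7}{3}$, $0$-hexagon $2-1.8=0.2$, $1$-hexagon $\frac{5}{3}$, $2$-hexagon $3$, and for $\vert f\vert=s\ge 7$ the bounds $0.7s-4$, $\frac{2s-7}{3}$, $\frac{2s-3}{3}$ according to $\vert V(f)\vert=0,1,2$; each of these is at least $0.4\,\vert V(f)\vert$. The only type for which the estimate can fail is the $0$-pentagon, which starts with charge $1$ and loses up to $0.3$ for each of its five $0$-edges whose attached wedge chain emanates from a $1$-triangle; hence the estimate only guarantees non-negativity when at most three such chains occur, which is exactly why $0$-pentagons that are wedge-neighbors of four or five $1$-triangles must be excluded here and treated afterwards.

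The main obstacle is making the per-$0$-edge accounting watertight: one must show no $0$-edge can be ``charged through'' twice, which rests on the uniqueness of the wedge chain attached to a $0$-edge, the mutual exclusivity of the $0$-triangle-with-demand case and the $1$-triangle case, and the observation that \textsf{Step 1} can only add a second charge across the (single) $0$-edge of a $2$-quadrilateral; and then to feed this, together with \cref{pro:steps-b}, into the two delicate estimates --- the $0$-hexagon ($2-1.8=0.2$) and the $0$-pentagon --- where the residual slack is precisely what carves out the exceptional case. Everything else reduces to a finite check over the at most three values of $\vert V(f)\vert$ and the small values of $\vert f\vert$.
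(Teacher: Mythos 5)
Your proposal is correct and follows essentially the same route as the paper: an exhaustive case analysis over face types using Propositions \ref{pro:steps-a}--\ref{pro:steps-d}, the bound of $\frac13$ (resp.\ $0.3$, via \ref{pro:steps-b} and \ref{pro:steps-c}) per wedge-neighbor, and the observation that \textsf{Step 5--7} only forward excesses; your per-$0$-edge bookkeeping is just a slightly more systematic packaging of the paper's ``at most $\vert f\vert$ wedge-neighbors'' count, and all your numerical bounds check out (you even get $0.2$ instead of $0$ for the $0$-hexagon by invoking \ref{pro:steps-b}).
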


\begin{proof}
    Note again that for a face $f$ the charge $\ch_4(f) \ge 0.4 \vert V(f) \vert $ implies already that it has no demand in \textsf{Step 5-7}, since faces only there contribute their excesses.
    
    To see that 0-triangles and 1-quadrilaterals are satisfied, we refer to Propositions \ref{pro:steps-a} and \ref{pro:steps-d}. 1-triangles are satisfied by definition of \textsf{Step 4}. Remember that only 3-triangles and $r$-$s$-gons with $r\le 2, s \ge 3$ can exist by \cref{pro:charging-helper}. Now we discuss the other cases:
    \begin{itemize}
        \item \emph{$f$ is a 2-triangle.} We start with $\ch(f) = 1$. As only wedge-neighbors contribute in \textsf{Step 1-2} and \textsf{Step 4} and $f$ cannot be a wedge-neighbor of another face, the only critical step is \textsf{Step 3}. Here, $f$ contributes in total at most its excess of $0.2$ charge, and therefore $\ch_4(f) \ge 1-0.2 \ge 2 \cdot 0.4$.
        \item \emph{$f$ is a 3-triangle.} We start with $\ch(f) = 2$ and $f$ never contributes charge. It follows that $\ch_4(f) = 2 \ge 3 \cdot 0.4$ holds.
        \item \emph{$f$ is a 0-quadrilateral.} Again, $f$ never contributes charge, and therefore $\ch(f) = \ch_4(f) = 0 \ge 0 \cdot 0.4$ holds.
        \item \emph{$f$ is a 2-quadrilateral.} We start with $\ch(f) = 2$. Note that $f$ contributes only once as it has only one wedge-neighbor, and therefore we have $\ch_4(f) \ge 2-1 \ge 2 \cdot 0.4$.
        \item \emph{$f$ is a 0-pentagon with at most three wedge-neighbors that are 1-triangles.} We have $\ch(f) = 1$ and $f$ contributes only to three faces. With \cref{pro:steps-b} and \cref{pro:steps-c} $\ch_4(f) \ge 1- 3 \cdot 0.3 \ge 0 \cdot 0.4$ follows.
        \item \emph{$f$ is a 1-pentagon or a 2-pentagon.} $f$ starts with $\ch(f) \ge 2$ and we have $\ch_4(f) \ge 2-3\cdot\frac{1}{3} \ge 2 \cdot 0.4$.
        \item \emph{$f$ is a face with $\vert f \vert \ge 6$.} Then $f$ may contribute to at most $\vert f \vert$ wedge-neighbors charge. Therefore, we have $\ch_4(f) \ge \vert f \vert + \vert V(f) \vert - 4 - \frac{1}{3} \cdot \vert f \vert \ge \vert V(f)\vert$.
    \end{itemize}
\end{proof}

It remains to prove that 0-pentagons with four or five wedge-neighbors that are 1-triangles have at least zero charge after \textsf{Step 7}. We show this by the following four propositions, which we only state here; the proofs can be found in \cref{app:charging}.

\begin{restatable}{proposition}{prostepf}\label{pro:steps-f}
    In \textsf{Step 5}, each 0-pentagon receives $0.3$ charge from all wedge-neighbors that are not 1-triangles, 0-triangles or 0-pentagons.
\end{restatable}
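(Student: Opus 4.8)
The goal is to show that in \textsf{Step 5}, every 0-pentagon $f$ receives the full $0.3$ charge from each wedge-neighbor $g$ that is not a 1-triangle, 0-triangle, or 0-pentagon. Recall that in \textsf{Step 5}, a face $g$ distributes its excess equally over its wedge-neighbors that are 0-pentagons, but gives at most $0.3$ to each. So the claim has two parts: first, that such a $g$ has enough excess after \textsf{Step 4} to give $0.3$ to each of its relevant wedge-neighbors, and second (implicitly), that $g$ is not itself a face that was already drained by earlier steps below the needed threshold. The first step of the plan is to enumerate, using \cref{pro:charging-helper}, exactly which face types $g$ can be: $g$ must be a face with $\vert g \vert \ge 4$ that is not a 0-quadrilateral (by part (c) of \cref{pro:charging-helper}, since it is the wedge-neighbor of something — though here $g$ is the wedge-neighbor of $f$, so we actually need to think about which faces \emph{have} 0-pentagons as wedge-neighbors). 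The candidate types for $g$ are therefore: 1-quadrilaterals, 2-quadrilaterals, 1-pentagons, 2-pentagons, and faces with $\vert g\vert \ge 6$ (of all admissible vertex-counts), since 0-quadrilaterals and 0-triangles/1-triangles/0-pentagons are explicitly excluded and 2-/3-triangles cannot be wedge-neighbors.

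The second step is to bound, for each such type of $g$, the number of wedge-neighbors that are 0-pentagons and the residual excess $\ch_4(g)$ available after \textsf{Step 1--4}. For a 2-quadrilateral, by the argument in \cref{pro:steps-e} it has exactly one wedge-neighbor and retains $\ch_4(g)\ge 1$, hence can afford $0.3$. For a 1-quadrilateral, it again has a single wedge-neighbor; from \cref{pro:steps-d} we know $\ch_4(g)\ge 0.4$, and if that wedge-neighbor is a 0-pentagon we need $\ch_4(g)\ge 0.3$ — here I would re-examine the \cref{pro:steps-d} case analysis to confirm that when the sole wedge-neighbor is a 0-pentagon, $g$ actually retains at least $0.3$ (the tight case in \cref{pro:steps-d} involved a 0-triangle plus a 1-triangle, not a 0-pentagon, so a single 0-pentagon wedge-neighbor should leave $\ge 1-1/3 > 0.3$, or even $1-0.3$ if \textsf{Step 5} is the only contribution). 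For 1-pentagons and 2-pentagons, $\ch(g)\ge 2$ and $g$ has at most $\vert g\vert = 5$ wedge-neighbors, of which at most, say, three or four can be 0-pentagons; combining with the \cref{pro:steps-e} bound $\ch_4(g)\ge 2 - 3\cdot\frac13$ and noting that wedge-neighbors that are 0-pentagons do not also receive \textsf{Step 2}/\textsf{Step 4} charge from $g$ as 1-triangles, one checks $0.3$ per 0-pentagon is affordable. For $\vert g\vert\ge 6$, the estimate $\ch_4(g)\ge \vert g\vert + \vert V(g)\vert - 4 - \frac13\vert g\vert$ already gives a large surplus relative to at most $\vert g\vert$ wedge-neighbors, so $0.3$ each is fine; here one has to be slightly careful that faces with $\vert V(g)\vert$ small still clear the bar, but $\frac23\vert g\vert - 4 \ge 0.3\vert g\vert$ for $\vert g\vert\ge 6$ with room to spare once $\vert V(g)\vert\ge 0$.

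The third step is the bookkeeping that ties it together: for each type, verify that (number of 0-pentagon wedge-neighbors)$\,\times 0.3 \le \ch_4(g)$, so that the "equal distribution capped at $0.3$" in \textsf{Step 5} actually delivers exactly $0.3$ to each 0-pentagon wedge-neighbor. The main obstacle I anticipate is the 1-quadrilateral case, since that is where the charge is tightest after \textsf{Step 4} and one must carefully rule out, via the geometry of wedge-neighbor chains and \cref{pro:charging-helper}, that a 1-quadrilateral is simultaneously forced to feed a 1-triangle in \textsf{Step 4} \emph{and} a 0-pentagon wedge-neighbor in \textsf{Step 5}; a wedge-neighbor chain of length up to 3 (possible under 3-planarity) through 0-quadrilaterals means the same edge's wedge-neighbor could be a 0-pentagon, so one must check the two wedge-neighbors of a 1-quadrilateral cannot both be "expensive." I expect this reduces, as in \cref{pro:steps-d}, to observing that the configuration forcing two expensive wedge-neighbors would force an edge with four crossings or two non-adjacent real vertices on a face, contradicting 3-planarity or \cref{pro:charging-helper}(d).
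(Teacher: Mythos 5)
Your overall strategy---enumerate the admissible types of wedge-neighbor $g$ and check type by type that $g$ can afford $0.3$ per 0-pentagon---is essentially the paper's, but your accounting omits the one observation that makes the numbers work, and as a result several of your checks are false as stated. In \textsf{Step 5} a face hands over only its \emph{excess} $\ch_4(g)-0.4\vert V(g)\vert$, so the condition is not ``$\ch_4(g)\ge 0.3$'' but ``$\ch_4(g)-0.4\vert V(g)\vert\ge 0.3k$'' where $k$ is the number of 0-pentagon wedge-neighbors. With that corrected, your bounds do not close: a 2-quadrilateral with $\ch_4(g)\ge 1$ has excess only $0.2$; a 0-hexagon with $\ch_4(g)\ge 2-6\cdot\frac13=0$ has excess $0$ (and your inequality $\frac23\vert g\vert-4\ge 0.3\vert g\vert$ is false for $\vert g\vert=6,\dots,10$); a 1-quadrilateral that paid $\frac13$ in \textsf{Step 2} has excess $\frac23-0.4=\frac{4}{15}<0.3$. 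The missing idea, which is the entire content of the paper's proof, is that a wedge-neighbor that is a 0-pentagon receives nothing in \textsf{Steps 1--4}; hence the $\ge\frac13\ge 0.3$ per wedge-neighbor that the calculations of \cref{pro:steps-e} already budgeted for it is still unspent, and \textsf{Step 5} merely reallocates that reserve. One does not pay $0.3$ on top of the worst-case \textsf{Step 1--4} expenditure.

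Even with this repair there is exactly one tight case, and you point at the wrong one. A 1-quadrilateral has two 0-edges and hence two wedge-neighbors (not one, as you first assert, although your closing paragraph retracts this). The combination you propose to exclude---feeding a 1-triangle in \textsf{Step 4} and a 0-pentagon in \textsf{Step 5}---is possible and harmless: by \cref{pro:steps-c} the 1-triangle costs at most $0.3$, leaving excess $1-0.3-0.4=0.3$, exactly enough. The dangerous combination is a 0-\emph{triangle with a demand} together with a 0-pentagon, which would cost $\frac13+0.3+0.4>1$. The paper's resolution is not that this configuration is impossible (so your plan of deriving a contradiction from 4 crossings or non-adjacent real vertices is aimed at the wrong target), but that a 0-triangle which is the wedge-neighbor of a 1-quadrilateral always has a 2-quadrilateral as a 0-neighbor (as in \cref{fig:easy-faces-c}), so it is satisfied after \textsf{Step 1} and receives nothing in \textsf{Step 2}; the 1-quadrilateral therefore keeps $\ch_4=1$ and its excess $0.6$ covers the 0-pentagon. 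Your proposal neither isolates this case correctly nor supplies the geometric fact that closes it.
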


\begin{restatable}{proposition}{prostepg}\label{pro:steps-g}
    In \textsf{Step 6}, each 1-face and 2-face $f$ with $\vert f \vert \ge 5$ and each 0-face $f$ with $\vert f \vert \ge 7$ contributes at least $0.4$ charge to the vertex-neighbors that are 0-quadrilaterals or 0-pentagons.
\end{restatable}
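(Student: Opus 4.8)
The plan is to bound the excess of a large face $f$ from below, show that this excess is distributed in \textsf{Step 6} only among vertex-neighbors at crossing-vertices of $f$, and check that each relevant face contributes at least $0.4$ in total. First I would recall what a large face can lose before \textsf{Step 6}: a face $f$ contributes charge only in \textsf{Step 2} (at most $\frac13$ per wedge-neighbor that is a $0$-triangle), \textsf{Step 3} (only $2$-triangles, so irrelevant here since $|f|\ge 5$), \textsf{Step 4} (at most the demand $0.3$ of each wedge-neighbor that is a $1$-triangle), and \textsf{Step 5} (at most $0.3$ per wedge-neighbor that is a $0$-pentagon). Since a face has at most $|f|$ wedge-neighbors and each wedge-neighbor at an edge costs $f$ at most $\frac13$ in Steps 2--5 combined (the worst single recipient is a $0$-triangle getting $\frac13$; a $1$-triangle gets $\le 0.3$; a $0$-pentagon gets $\le 0.3$), the charge after \textsf{Step 5} satisfies $\ch_5(f)\ge |f| + |V(f)| - 4 - \frac13|f| = \frac23|f| + |V(f)| - 4$.

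Next I would plug in the three cases. For a $1$-face or $2$-face with $|f|\ge 5$: $\ch_5(f)\ge \frac23\cdot 5 + 1 - 4 = \frac{7}{3} - \text{(correction)}$; more carefully, with $|V(f)|\ge 1$ we get $\ch_5(f)\ge \frac{10}{3} + 1 - 4 = \frac13$, and the part exceeding $0.4\,|V(f)|$ — the excess available in \textsf{Step 6} — is at least $\frac13 + |V(f)| - 1 - 0.4|V(f)| \ge \frac13 - 0.4 + 0.6|V(f)|$, which for $|V(f)|\ge 1$ is $\ge \frac13 + 0.2 > 0.4$; for $|f|\ge 6$ the slack is even larger. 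For a $0$-face with $|f|\ge 7$: $\ch_5(f)\ge \frac23\cdot 7 - 4 = \frac{2}{3}$, and since $|V(f)|=0$ the entire $\frac23 > 0.4$ is excess. (I would double-check the boundary case $|f|=5$, $|V(f)|=2$ and the $0$-hexagon case, since $0$-hexagons with six $1$-triangle wedge-neighbors are excluded from this proposition but $0$-hexagons are not $\ge 7$; the statement deliberately starts $0$-faces at $|f|\ge 7$, so this is consistent.) The remaining point is that in \textsf{Step 6} this excess goes to vertex-neighbors that are $0$-quadrilaterals or $0$-pentagons, and I would argue that $f$ actually has such vertex-neighbors when it has the incident crossings forcing the large size — but since the proposition only asserts the \emph{amount} $f$ contributes conditioned on having such neighbors (it says "contributes at least $0.4$ charge to the vertex-neighbors that are $0$-quadrilaterals or $0$-pentagons", i.e., the total sent equals the excess when recipients exist, and is vacuously true otherwise), the core is the excess lower bound above.

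The main obstacle I anticipate is making the per-edge accounting airtight: a single bounding edge of $f$ could in principle be the wedge-edge through which $f$ feeds a chain of $0$-quadrilaterals ending in a $0$-triangle (\textsf{Step 2}) \emph{and} separately an edge on a different side of $f$ might host a $0$-pentagon wedge-neighbor, so I must confirm that distinct recipients correspond to distinct boundary edges of $f$ and that no boundary edge is charged twice across Steps 2, 4, 5. This is where Ackerman-style wedge-neighbor uniqueness (each $0$-edge of $f$ has exactly one wedge-neighbor) does the work, together with \cref{pro:charging-helper}(c) ruling out $0$-triangles whose wedge-neighbor is a $0$-quadrilateral. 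Once that bookkeeping is fixed, the arithmetic in each of the three size regimes is routine and comfortably clears the $0.4$ threshold.
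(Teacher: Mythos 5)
There is a genuine gap, and it starts with the reading of the statement. Step~6 distributes the excess of $f$ \emph{equally} over \emph{all} vertex-neighbors that are $0$-quadrilaterals or $0$-pentagons, and the proposition is used later (e.g.\ in the proofs of \cref{pro:steps-h} and \cref{pro:steps-i}) to guarantee that one \emph{specific} such vertex-neighbor receives $0.4$. So what must be shown is that the excess is at least $0.4k$, where $k$ is the number of recipients, not that the total excess is at least $0.4$ as you assume ("the total sent equals the excess when recipients exist"). This forces you to bound $k$; the paper does this by observing that two consecutive crossings on the boundary of $f$ cannot both have $0$-faces as vertex-neighbors (otherwise the shared edge of $D$ would have more than three crossings), whence $k\le\left\lfloor \vert f\vert/2\right\rfloor$, and then compares the excess against $0.4\left\lfloor \vert f\vert/2\right\rfloor$. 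Without this counting step your bound of roughly $0.4$ total excess gives each of, say, three recipients of a $0$-heptagon only about $0.22$ charge, which is not enough.

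Second, even under the correct target the crude per-edge arithmetic does not close the boundary cases, and your computation for $1$-pentagons contains a sign slip that hides this: $\tfrac13+\vert V(f)\vert-1-0.4\vert V(f)\vert$ evaluates at $\vert V(f)\vert=1$ to $-\tfrac{1}{15}$, not to $\tfrac13+0.2$ (you replaced the $-1$ by $-0.4$). In fact a $1$-pentagon can a priori contribute to two vertex-neighbors in Step~6 and to three wedge-neighbors in Steps~1--5, and a $0$-heptagon can contribute to three vertex-neighbors; in both cases the numeric excess bound falls short of $0.4k$. The paper resolves exactly these two cases by structural arguments (if a $0$-heptagon feeds three vertex-neighbors, some wedge-neighbor has $\vert f\vert\ge4$ and $\vert V(f)\vert\ge1$ and so receives nothing in Steps~1--5; a $1$-pentagon feeding two vertex-neighbors and three $1$-triangle wedge-neighbors leads to a contradiction via where the middle $0$-edge must end). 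These geometric case analyses are the substance of the proof and are absent from your proposal; the "routine arithmetic" you describe does not comfortably clear the threshold in precisely the cases the proposition is needed for.
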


\begin{restatable}{proposition}{prosteph}\label{pro:steps-h}
     After \textsf{Step 7}, all 0-pentagons that are the wedge-neighbor of four 1-triangles are satisfied.
\end{restatable}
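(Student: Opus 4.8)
The target is \cref{pro:steps-h}: a 0-pentagon $f$ that is the wedge-neighbor of exactly four 1-triangles ends with nonnegative charge. I will track $f$'s charge through the seven steps. After \textsf{Step 1-2}, $f$ contributes nothing (it is a 0-face, hence covered by \cref{pro:steps-b}), so $\ch_2(f)=\ch(f)=1$. In \textsf{Step 3} a 0-pentagon never contributes (it is not a 1-triangle). In \textsf{Step 4}, $f$ pays its wedge-neighbors that are 1-triangles their demand, which by \cref{pro:steps-c} is at most $0.3$ each; with four such neighbors this costs at most $4\cdot0.3=1.2$, leaving $\ch_4(f)\ge 1-1.2=-0.2$. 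So the whole task reduces to showing that $f$ recovers at least $0.2$ charge across \textsf{Steps 5-7}.

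The fifth boundary edge of $f$ is the one not shared with a 1-triangle. I would argue that the face $g$ on the other side of that edge is ``large'': since the four 1-triangle wedge-neighbors force the four remaining boundary edges of $f$ to be $0$-edges whose underlying edges each already carry crossings, a short case analysis on where those crossed edges terminate (as in the proofs of \cref{pro:steps-b}, \cref{pro:steps-c}, \cref{pro:steps-d}) pins down the possible structure of $g$ and of $f$'s vertex-neighbors at the four crossings on its boundary. The goal of this analysis is to land in the hypotheses of \cref{pro:steps-f}, \cref{pro:steps-g}: either $f$ has a wedge-neighbor across the fifth edge that is neither a $1$-triangle, $0$-triangle nor $0$-pentagon — then \cref{pro:steps-f} gives $f$ a full $0.3$ in \textsf{Step 5}, which already suffices — or, if the fifth neighbor is itself small, then at least one of the four crossings on $\partial f$ is incident to a $1$-face or $2$-face with $\ge 5$ sides or a $0$-face with $\ge 7$ sides, and \cref{pro:steps-g} routes at least $0.4$ to $f$ through the vertex-neighbor relation in \textsf{Step 6} (possibly via an intermediate $0$-quadrilateral, as that step prescribes). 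In the worst residual case I would fall back on \textsf{Step 7}, using the excess of a $2$-triangle sitting inside the minimal planar cycle $\mathcal C(f)$ enclosing $f$; the existence of such a planar cycle with spare charge is exactly the mechanism already exploited for $1$-quadrilaterals in the proof of \cref{th:density-2-planar-forbidden-configuration}.

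The main obstacle is the case distinction on the local picture around the fifth edge: one has to rule out, or else absorb, the situation where every face adjacent to $f$ (across edges and at crossings) is itself ``tight'' — a $1$-triangle, a $0$-pentagon, a $0$-quadrilateral, or a $0$-triangle — since then none of \cref{pro:steps-f}, \cref{pro:steps-g} applies directly and one must chase the charge further out along wedge- and vertex-neighbor chains. I expect that $3$-planarity (an underlying edge has at most three crossing segments) together with the $F^3_6$-freeness is what breaks this deadlock: a configuration in which a $0$-pentagon is surrounded on all sides by such tight faces, with four $1$-triangle wedges, would either force a fourth crossing on some edge or complete an $F^3_6$ on the enclosing hexagon, both contradictions. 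Once the surrounding face is shown to be large enough, the arithmetic ($0.3$ from \textsf{Step 5}, or $0.4$ from \textsf{Step 6}, versus the $0.2$ deficit) is immediate, and I would only spell out the tightest sub-case in detail, deferring the rest to the figures as the paper does elsewhere.
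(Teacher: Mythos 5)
Your setup is right and matches the paper's: you correctly compute $\ch_4(f)\ge 1-4\cdot 0.3=-0.2$ via \cref{pro:steps-b} and \cref{pro:steps-c}, you correctly isolate the fifth wedge-neighbor $t_0$ as the decisive object, and you correctly observe that if $t_0$ is neither a $1$-triangle, $0$-triangle nor $0$-pentagon, then \cref{pro:steps-f} delivers $0.3$ in \textsf{Step 5} and you are done. This is exactly how the paper opens its proof.

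However, the remaining two cases — $t_0$ a $0$-triangle and $t_0$ a $0$-pentagon — are the actual content of the proposition, and your proposal does not carry them out; it asserts that ``a short case analysis'' will land in the hypotheses of \cref{pro:steps-f} or \cref{pro:steps-g} or be rescued by \textsf{Step 7}, and that $F^3_6$-freeness breaks any deadlock. Neither claim is substantiated, and the second is off target: the paper's proof of this proposition never invokes $F^3_6$-freeness (that is needed only for the five-wedge-neighbor case, \cref{pro:steps-i}). What actually closes the two hard cases is a concrete $3$-planarity count on the boundary edges of $f$: when $t_0$ is a $0$-triangle, $\overline{e}_1$ and $\overline{e}_4$ already carry three crossings and $\overline{e}_0$ two, forcing $\overline{e}_0$ to end at $f_0$ or $f_4$, so one vertex-neighbor is a $2$-quadrilateral whose entire excess of $0.2$ flows to $f$ in \textsf{Step 6}; when $t_0$ is a $0$-pentagon, the same count forces $f_2$ and one of $f_1,f_3$ to be $2$-triangles, which lowers the demand of the adjacent $1$-triangles and improves the deficit to $0.1$ \emph{before} one even looks at $t_0$'s own charge — a quantitative refinement your sketch does not anticipate and without which the subsequent bookkeeping (subcases on $f_4$, including the \textsf{Step 7} fallback, where the donor is a $2$-quadrilateral, not a $2$-triangle) does not add up. As written, your proposal is a plausible plan whose hardest step is deferred rather than a proof.
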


\begin{restatable}{proposition}{prostepi}\label{pro:steps-i}
    After \textsf{Step 7}, all 0-pentagons that are the wedge-neighbor of five 1-triangles are satisfied.
\end{restatable}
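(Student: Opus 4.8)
\textbf{Proof proposal for Proposition~\ref{pro:steps-i}.}
The plan is to consider a 0-pentagon $f$ that is the wedge-neighbor of five 1-triangles and to track its charge budget through \textsf{Step 5-7}. We start with $\ch(f)=1$, and by the definition of \textsf{Step 4} together with \cref{pro:steps-c} each of the five 1-triangles drains at most $0.3$ from $f$, so the worst case is $\ch_4(f) = 1 - 5\cdot 0.3 = -0.5$. The task is to recover $0.5$ units of charge from the five faces surrounding $f$ across the remaining steps. The first observation is structural: the wedge-neighbor of a 1-triangle is, by \cref{pro:charging-helper}(c), a face $g$ with $|g|\ge 4$ that is not a 0-quadrilateral; I would argue, using 3-planarity and the edge count along the boundary 5-cycle of $f$ (as in the analysis of $F^2_5$-like configurations), that these five wedge-neighbors cannot all be ``small'' (1-triangles, 0-triangles, or 0-pentagons) — in fact, since $f$ is a 0-pentagon with five 1-triangle wedge-neighbors, the edges bounding $f$ already carry two crossings each toward those triangles, which forces at least some of the wedge-neighbors to be larger faces. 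Invoking \cref{pro:steps-f}, every wedge-neighbor that is not a 1-triangle, 0-triangle, or 0-pentagon sends $0.3$ to $f$ in \textsf{Step 5}, so if at least two of the five are of this ``large'' type we already get $2\cdot 0.3 = 0.6 \ge 0.5$ and $f$ is satisfied.

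So the crux is the case where at most one wedge-neighbor is large. I would show that if a wedge-neighbor $g$ of one of the 1-triangles is itself a 0-triangle, a 0-pentagon, or a 1-triangle, then the local picture around the crossing shared by $f$ and $g$ is highly constrained: $f$ and $g$ are then \emph{vertex-neighbors} across that crossing, and charge flows via \textsf{Step 6}. Concretely, \cref{pro:steps-g} guarantees that a 1-face or 2-face with $|g|\ge 5$, and a 0-face with $|g|\ge 7$, pushes at least $0.4$ to its 0-quadrilateral/0-pentagon vertex-neighbors; combining the \textsf{Step 5} and \textsf{Step 6} contributions across the at least four ``non-large'' wedge positions (which open up vertex-neighbor relations to exactly such big faces, because a short boundary path forces a big face on the other side by 3-planarity), one accumulates well over $0.5$. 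Finally, \textsf{Step 7} provides a fallback: the inclusion-minimal planar cycle $\mathcal{C}(f)$ enclosing $f$ contains 2-triangles whose pooled excess ($0.2$ each) can be routed to $f$; I would verify, as in the $F^2_5$-free analysis of \cref{th:density-2-planar-forbidden-configuration}, that the configuration of five 1-triangles around a 0-pentagon is exactly the one ruled out when the surrounding structure would otherwise be an optimal-density pocket, so enough excess sits inside $\mathcal{C}(f)$.

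The main obstacle I anticipate is the bookkeeping in the tight sub-case: showing that when the wedge-neighbors are as small as possible, the five 1-triangles themselves cannot all simultaneously extract the full $0.3$ — because \textsf{Step 3} already shifts $0.2$ from an adjacent 2-triangle into some of them, cutting their demand on $f$ below $0.3$ — and that the vertex-neighbor / \textsf{Step 7} channels do not double-count charge already spent satisfying other 0-pentagons. I would handle this by a careful case split on the cyclic pattern of wedge-neighbor types around $f$ (there are only a few up to rotation/reflection), in each case exhibiting a concrete set of donor faces whose excesses are disjointly allocated to $f$ and summing to at least $0.5$. The parity/counting lemma that ``five 1-triangle wedge-neighbors of a 0-pentagon cannot coexist with five small wedge-neighbors'' is the key structural fact, and it mirrors the $F^2_5$-exclusion argument already used for \cref{th:density-2-planar-forbidden-configuration}.
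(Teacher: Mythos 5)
Your central mechanism for recovering the missing charge does not exist under the hypothesis of the proposition. You propose to show that ``these five wedge-neighbors cannot all be small'' and then collect $0.3$ per ``large'' wedge-neighbor of $f$ via \textsf{Step 5} and \cref{pro:steps-f}. But the hypothesis of \cref{pro:steps-i} is precisely that \emph{all five} wedge-neighbors of $f$ are 1-triangles, so none of them is eligible to donate in \textsf{Step 5}; the case where one wedge-neighbor is large is exactly \cref{pro:steps-h}, not this one. Your ``key structural fact'' is therefore vacuous (and your phrasing also conflates the wedge-neighbor of a 1-triangle, which is $f$ itself, with the wedge-neighbors of $f$). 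The charge must come from entirely different faces: the \emph{vertex-neighbors} $f_0,\dots,f_4$ of $f$ and, indirectly, the 0-quadrilaterals among the 0-neighbors of $f$.

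The paper's proof instead case-splits on how many 0-neighbors of $f$ are 0-quadrilaterals (zero, one, or two --- at most two can occur). When there are none, all five vertex-neighbors are 2-triangles, each feeding its two incident 1-triangles in \textsf{Step 3}, so every 1-triangle's demand on $f$ drops to $0.2$ and $\ch_4(f)=1-5\cdot 0.2=0$ with no recovery needed; this ``\textsf{Step 3} reduces the drain'' effect, which you relegate to an anticipated obstacle, is in fact the first-order mechanism. With one or two 0-quadrilaterals present, the proof identifies the specific vertex-neighbors that must be 2-quadrilaterals, 1-quadrilaterals, 0-pentagons or 0-hexagons, bounds their excesses, and routes charge through \textsf{Step 6} (including the relay through 0-quadrilaterals) and, in a few tight configurations where a planar 6- or 7-cycle is forced, through \textsf{Step 7}; the $F^3_6$-freeness is invoked only once, to exclude the sub-case where a neighboring 0-pentagon has five 1-triangle wedge-neighbors of its own. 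Your proposal does correctly name \cref{pro:steps-g}, \textsf{Step 6}, and \textsf{Step 7} as relevant channels, but without the case analysis on the 0-quadrilateral positions and the resulting forced local structure, there is no argument --- and the argument you do give in its place would prove nothing in the only situation the proposition addresses.
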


By Propositions \ref{pro:steps-e}, \ref{pro:steps-h} and \ref{pro:steps-i} $\ch'(f) \ge 0.4 \cdot \vert V(f) \vert$ holds for all faces $f \in F'$. Since~charge is only moved, its total amount is still $4n-8$ and \cref{eq:modified-charge} implies $m \le \frac{2}{0.4}(n-2)$.
\end{proof}

\section{Proof of Theorem \ref{th:linear-bounds}}\label{sec:linear-bounds}

In this section, we present the proof of \cref{th:linear-bounds} that shows how to use the earlier stated observations and theorems and leads to a better bound for the Crossing Lemma. 

\thlinearbounds*
\begin{proof}
We start proving the bound in (a). If $m \le 5(n-2)$, then the bound follows from the linear bound $\cross(G) \ge \frac{7}{3}m - \frac{25}{3}(n-2)$ \cite{DBLP:journals/dcg/PachRTT06}. So assume $m > 5(n-2)$ and let $D$ be a crossing-minimal drawing of $G$.
From $D$, we iteratively remove the edge with the most  crossings until $5(n-2)$ edges are left. In particular, as long as the maximum number of crossings is three, we always remove an edge from an $F^3_6$ configuration. By \cref{th:density-3-planar-forbidden-configuration}, we stop latest, when there are no $F^3_6$ configurations. 
By this process, edges are iteratively deleted until we reach $5(n-2)$ edges, as following:
\begin{itemize}
    \item $m_{5+}$ edges with five or more crossings -- denote the resulting drawing by $D_4$,
    \item then $m_4$ edges with four crossings -- denote the resulting drawing by $D_3$ and the set of edges deleted in this step by $E_4$,
    \item then $m_3$ edges with three crossings from $F^3_6$ configurations -- denote the resulting drawing by $D_{3-}$.
\end{itemize}
Note that $m_4$ or $m_3$ could be zero in the case that we reached $5(n-2)$ already during step (1) or (2).
Afterwards we have $m_3$ edge-disjoint $F^3_6$ configurations with a missing edge in $D_{3-}$.
\begin{figure}
    \center
    \includegraphics[width=0.35\textwidth, page=1]{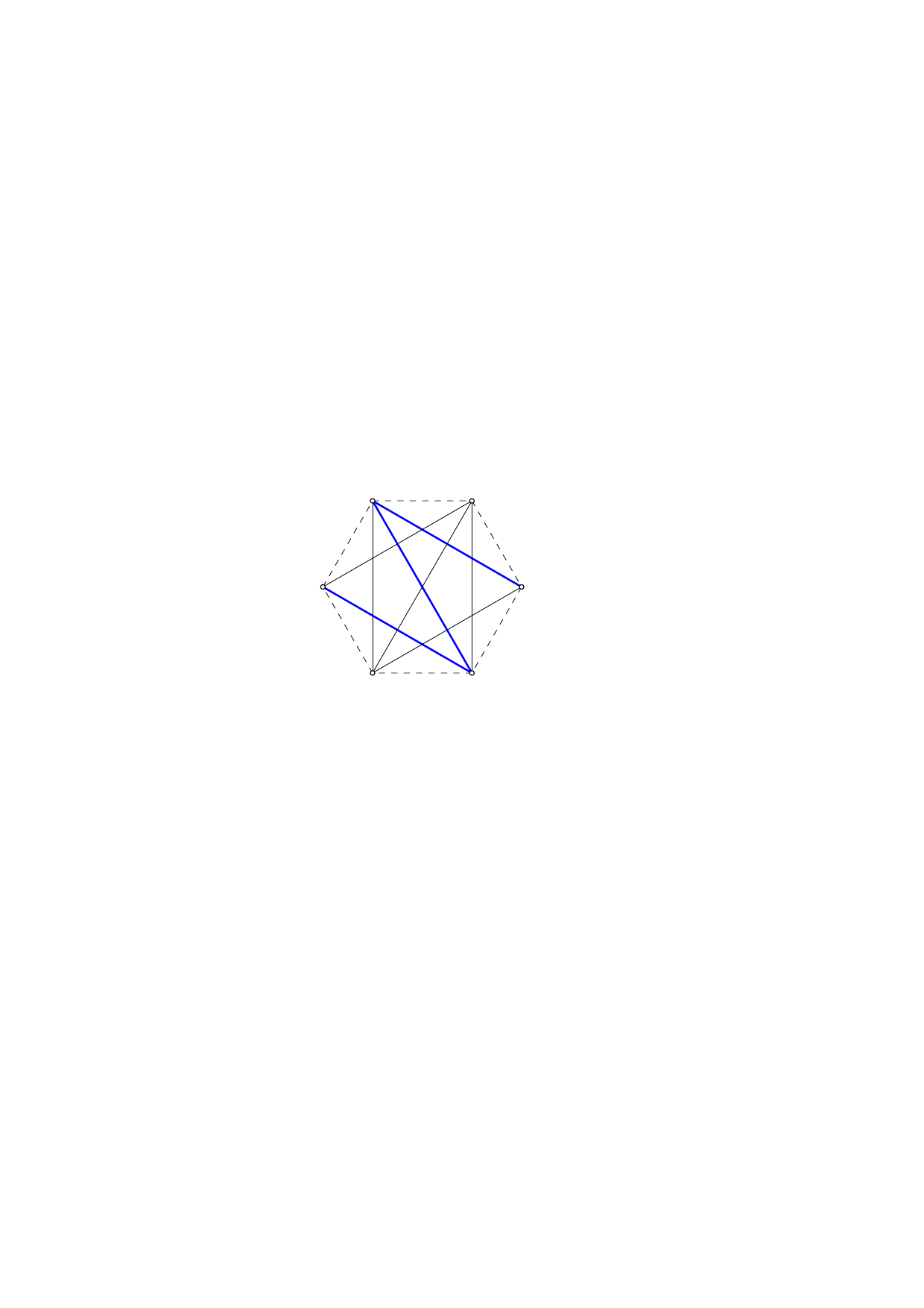}  
    \hfil
    \caption{Three independent edges (blue) with three crossings in an $F^3_6$ configuration. In $D_3$ one of them is already deleted, in $D_{3-}$ also the other two.}
    \label{fig:linear-bound}
\end{figure}
So we are able to find $2m_3$ more independent edges with three crossings and delete them (see \cref{fig:linear-bound}). Continue the deletion process by still removing the edge with the most crossings until this edge no longer has three or more crossings; we denote the number of these deleted edges by $m_{3-}$. Call the achieved drawing $D_2$. By applying the linear bound from \cite{DBLP:journals/dcg/PachRTT06} again, we have
\begin{align}
    \cross(G) &\ge [5m_{5+} + 4m_4 + 3m_3 ] + [2 \cdot 3m_3 + 3m_{3-}] + \left[\frac{7}{3}(5(n-2)-2m_3-m_{3-}) - \frac{25}{3}(n-2)\right]\notag \\
    &= 5m_{5+} + 4m_4 + \frac{13}{3}m_3 + \frac{2}{3}m_{3-} + \frac{10}{3}(n-2).\label{eq:edges-splitted}
\end{align}
As all values are non-negative, it is not hard to see that this is at least \[\ge 4(m_{5+}+m_4+m_3+5(n-2))- \frac{50}{3}(n-2) = 4m-\frac{50}{3}(n-2).\]
For the better bound of $\cross(G) \ge \frac{37}{9}m - \frac{155}{9} (n-2)$ we have to elaborate on the value $m_4$, as there was no slack in the last inequality.

As a preparation, we first consider the structure of $D_2$.
Let $c_{pent}$ be the number of $F^2_5$ configurations and $c_{hex}$ the number of $F^2_6$ configurations in $D_2$. Let further $E_0$ be the set of crossing-free edges on the boundary of the forbidden configurations in $D_3$ resp. $D_2$ that do not exist in $D_2$, and therefore may be added.
We denote $\vert E_0\vert = m_0$ and state the following; the proof is in \cref{app:linear-bound}.

\begin{restatable}{proposition}{prooptface}\label{pro:optimal-faces}
With the notation above, $c_{pent} + c_{hex} \ge \frac{2}{3}(n-2) - \frac{4}{3}m_3 - m_{3-}+m_0$.
\end{restatable}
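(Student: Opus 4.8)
The plan is to apply \cref{cor:characterization-dense-2-planar} to the drawing $D_2$ after carefully accounting for how many edges $D_2$ has relative to the $\frac{13}{3}(n-2)$ threshold. First I would establish that $D_2$ is a $2$-planar drawing: by construction, the deletion process stops precisely when the edge with the most crossings no longer has three or more crossings, so every remaining edge is crossed at most twice. Next I would count the edges of $D_2$. Starting from $5(n-2)$ edges in $D_{3-}$, we deleted $2m_3$ further edges (the two companions of each missing $F^3_6$-edge, as in \cref{fig:linear-bound}) and then $m_{3-}$ more edges, so $D_2$ has exactly $5(n-2) - 2m_3 - m_{3-}$ edges. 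However, to apply the corollary we should also add back the crossing-free boundary edges that are currently missing: the set $E_0$ has $m_0$ edges, all planar, and adding them keeps the drawing $2$-planar (planar edges cross nothing and create no new crossings on existing edges — one must check they do not coincide with already-present edges, which is why $E_0$ is defined as the missing boundary edges). So the augmented drawing $D_2^+$ has $5(n-2) - 2m_3 - m_{3-} + m_0$ edges.

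Then I would write this edge count in the form $\frac{13}{3}(n-2) + x$ required by \cref{cor:characterization-dense-2-planar}, giving $x = \frac{2}{3}(n-2) - 2m_3 - m_{3-} + m_0$. The corollary then yields that the number of $F^2_5$ and $F^2_6$ configurations in $D_2^+$ is at least $x$. The main obstacle here is the mismatch: the claimed bound has coefficient $-\frac{4}{3}m_3$, not $-2m_3$, so the naive application is off by $\frac{2}{3}m_3$. I expect this gap is recovered by observing that adding back the $m_0$ planar boundary edges, together with the structure of the $m_3$ former $F^3_6$ configurations (each of which contributes its boundary hexagon and, once the three blue edges are gone, a pentagonal or hexagonal region that the completion process fills in as a forbidden configuration), effectively restores some of these configurations for free — i.e., each destroyed $F^3_6$ accounts for part of an $F^2_5$ or $F^2_6$ rather than a full deletion of $2m_3$ worth of potential. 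More precisely, I would argue that in $D_2^+$ one can locate at least $\frac{1}{3}m_3$ additional forbidden $2$-planar configurations sitting inside the former $F^3_6$ boundaries (since $F^3_6$ minus its three long edges leaves short edges forming part of a full $2$-planar hexagon within the same boundary $6$-cycle), so that $c_{pent} + c_{hex} \ge x + \frac{2}{3}m_3 = \frac{2}{3}(n-2) - \frac{4}{3}m_3 - m_{3-} + m_0$, and the configurations counted survive the removal of $E_0$ since forbidden configurations contain no planar edges.

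One subtlety I would be careful about is that $c_{pent}$ and $c_{hex}$ are defined as the counts in $D_2$, not $D_2^+$; since $F^2_5$ and $F^2_6$ configurations contain no planar edges (by the argument already used in \cref{pro:charging-helper}(d)), every such configuration in $D_2^+$ is already present in $D_2$, so passing from $D_2^+$ back to $D_2$ does not decrease the count. Another point worth checking is that the augmented drawing remains $2$-connected and crossing-minimal in the relevant sense, or at least that \cref{cor:characterization-dense-2-planar} — which only assumes a $2$-planar drawing — applies directly without those hypotheses; since the corollary as stated needs nothing beyond $2$-planarity and the vertex count, this is immediate. The hard part will be justifying the recovery of the extra $\frac{2}{3}m_3$ term rigorously, i.e., exhibiting the forbidden configurations hidden inside the remnants of the deleted $F^3_6$'s and confirming they are edge-disjoint from the ones counted by the direct application of the corollary; everything else is bookkeeping on the edge counts established above.
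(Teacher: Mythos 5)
Your setup is right and matches the paper's: $D_2$ is $2$-planar with $5(n-2)-2m_3-m_{3-}$ edges, adding back $E_0$ is harmless, and a direct application of the density bound yields only $c_{pent}+c_{hex} \ge \frac{2}{3}(n-2) - 2m_3 - m_{3-} + m_0$, leaving a deficit of $\frac{2}{3}m_3$. But the way you propose to close that deficit does not work. You claim that inside each former $F^3_6$ boundary the surviving short edges form (part of) a full $2$-planar hexagon, so that extra forbidden configurations can be located there. They cannot: an $F^3_6$ has six $2$-hop edges and two $3$-hop edges, and the deletion process removes two of the $2$-hops and one $3$-hop, so what remains inside the hexagonal boundary is four $2$-hop edges and one crossing-free $3$-hop edge. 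An $F^2_6$ requires all six $2$-hops, and an $F^2_5$ lives inside a pentagonal boundary, so in general there is no forbidden $2$-planar configuration in these remnants. (Your account is also internally inconsistent: you announce $\frac{1}{3}m_3$ extra configurations but then add $\frac{2}{3}m_3$ to the count, and "fractional configurations per hexagon" is not something a counting argument can exhibit.)

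The paper recovers the missing $\frac{2}{3}m_3$ by a different device: before applying \cref{th:density-2-planar-forbidden-configuration}, it augments the drawing by placing one \emph{new vertex} and five new edges inside each of the $m_3$ former $F^3_6$ configurations (next to the surviving crossing-free $3$-hop edge), then deletes one edge from each $F^2_5$ and $F^2_6$ to make the drawing configuration-free. Each added vertex raises the allowed edge budget by $\frac{13}{3}$ while contributing only $5$ actual edges is wrong way around --- it contributes $5$ edges against a budget increase of $\frac{13}{3}$, a net surplus of $\frac{2}{3}$ per configuration, which is exactly the $\frac{2}{3}m_3$ you are missing; the final count then reads $5(n-2)-2m_3-m_{3-}+m_0+5m_3-(c_{pent}+c_{hex}) \le \frac{13}{3}(n-2+m_3)$, which rearranges to the stated bound. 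Without this (or an equivalent) augmentation step, your argument only proves the weaker inequality with coefficient $-2m_3$, so the proposal as written has a genuine gap.
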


Next, we show how to limit the number of the edges of $E_4$, i.e., the deleted edges that were accounted with four crossings in $D_4$. 
For that, we introduce a triangulation $\mathcal{H}$ on the vertices of $G$ such that has the following properties. Edges with property $(i)$ are contained in subset $S_i$ for $1\leq i \leq 5$:

\begin{itemize}
\item[(1)] $\mathcal{H}$ contains 
the boundary of every $F^3_6$ configuration in $D_3$
\item[(2)] $\mathcal{H}$ contains 
 the boundary of every $F^2_5$ and $F^2_6$ configuration in $D_2$
\item[(3)] $\mathcal{H}$ contains 
  every edge in $E_4$ that lies completely outside of these forbidden configurations
 Furthermore 
\item[(4)] Let $uu'$ be a boundary edge of the forbidden configuration $P$ that exists in $D_4$ and is crossed by at least one edge $e \in E_4$. 
We consider the crossing edge, say $e = ab$ that has the crossing $c$ with $uu'$ closest to $u$. Consider the two segments $(a,c)$ and $(c,b)$ of $e$, such that $(a,c)$ is completely outside of the forbidden configuration $P$. Then $\mathcal{H}$ contains the triangle $t$, which is adjacent to $uu'$ and consists of the edge $uu'$, as second edge, we define the edge that closely follows the two segments $(u,c)$ and $(c,a)$, and as the third edge of $t$, we take the edge that closely follows the two segments $(a,c)$ and $(c,u')$ (see \cref{fig:definition-H-a}). We call $t$ the \emph{charging triangle} of $uu'$ and $(a,c)$ the \emph{pillar} of $t$.
\item[(5)]  Let $\tilde{e}_1$, $\tilde{e}_2$ be the edge-segments enclosing the 2-triangle next to $uu'$ in the forbidden configuration.
For the case that a $\tilde{e}_i$ with $i \in \{1,2\} $ is crossed by two edges $e,e' \in E_4$, let $\tilde{e}_1$ be incident to $u$, let $a,a'$ be the endpoints of $e,e'$ that lie outside of the forbidden configuration and let $c'$ be the crossing of the edges $au'$ and $e'$. If $a\ne a'$, then $\mathcal{H}$ contains the triangle $t'$ defined by the edges $u'a$, $aa'$ and $au'$, where $aa'$ closely follows the edge segments $(ac')$ and $(c'a')$ of $ua'$ and $e'$ and $au'$ closely follows the edge segments $(a'c')$ and $(c'u')$ (see \cref{fig:definition-H-b}). We call $t'$ the \emph{charging by-triangle} of $uu'$ and $(a',c')$ the \emph{pillar} of $t$.
\end{itemize}
Note that pillars do not cross a boundary edge as this would imply that the corresponding edges $e,e' \in E_4$ have more than four crossings.

\begin{figure}
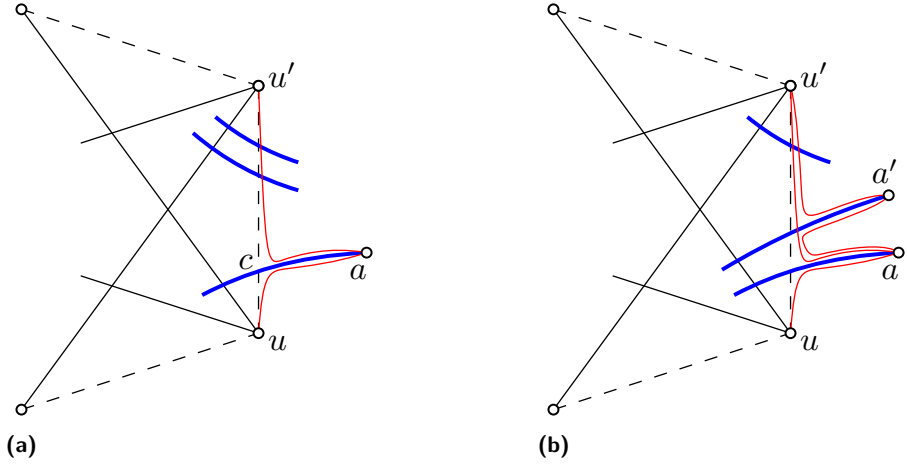

    \begin{subfigure}[b]{0.35\linewidth}
    \center
    \hfil
    \includegraphics[width=\textwidth, page=4]{linearBound.pdf}
    \subcaption{}
    \label{fig:definition-H-a}
    \end{subfigure}  
    \hfil
    \begin{subfigure}[b]{0.35\linewidth}
    \center
    \includegraphics[width=\textwidth, page=7]{linearBound.pdf}
    \subcaption{}
    \label{fig:definition-H-b}
    \end{subfigure}
    \hfil
    \caption{The construction of a charging triangle (a) and a charging by-triangle (b) of $uu'$.}
    \label{fig:definition-H}
\end{figure}

\begin{claim}
    A triangulation $\mathcal{H}$ with the properties (1)-(5) always exist.
\end{claim}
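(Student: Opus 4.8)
The plan is to construct $\mathcal{H}$ greedily in a way that respects the nesting structure of the forbidden configurations, and then argue that the edge sets $S_1, \dots, S_5$ specified by properties (1)--(5) are mutually compatible, i.e., they can all be realized simultaneously inside a single triangulation without any two of the required edges crossing. First I would record the basic edge-disjointness facts already available: by the discussion after the definitions, $F^3_6$ configurations are edge-disjoint and their boundaries consist of uncrossed edges in $D_3$ (no empty lenses), so property (1) prescribes a set of interior-disjoint uncrossed cycles; likewise the $F^2_5$ and $F^2_6$ configurations are edge-disjoint in $D_2$ with uncrossed boundary cycles, giving property (2). Since $D_2$ is obtained from $D_3$ by deleting edges, every $F^3_6$ boundary cycle from $D_3$ either survives in $D_2$ or is internally subdivided by later structure; in either case the regions enclosed by the configuration boundaries from (1) and (2) are laminar (pairwise nested or disjoint), because two such cycles cannot properly cross when all their edges are uncrossed planar edges.

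Next I would handle the interior of each enclosed region. Inside a region bounded by a forbidden-configuration boundary, properties (3), (4) and (5) only ever add edges: the edges of $E_4$ lying entirely outside all configurations (property (3)) are by definition non-crossing segments in $D_4$ — more precisely, one picks a maximal planar subset of them, which is harmless since we only need \emph{some} triangulation containing them, not all of $E_4$. The charging triangles and by-triangles of (4) and (5) are built to hug edge segments that already exist in $D_4$, and the remark immediately preceding the claim notes that pillars do not cross any boundary edge; I would extend this observation to show that the newly drawn edges of distinct charging triangles/by-triangles do not cross each other either, since each is confined to a thin neighborhood of a distinct edge segment of $D_4$ on a fixed side of the boundary edge $uu'$, and distinct boundary edges $uu'$ bound disjoint charging triangles. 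Thus within each face of the arrangement formed by (1)--(2) together with the (3)--(5) edges, we obtain a plane graph with no crossings, and any plane graph on a vertex set can be completed to a triangulation by adding edges greedily. Carrying this out region by region, from the outermost down, yields the required $\mathcal{H}$, with $S_i$ simply the set of edges mandated by property $(i)$.

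The main obstacle I anticipate is property (5) interacting with property (4): a boundary edge $uu'$ may simultaneously host a charging triangle (from an $E_4$-edge crossing $uu'$ itself) and a charging by-triangle (from two $E_4$-edges crossing one of the bounding segments $\tilde{e}_i$ of the adjacent $2$-triangle), and one must check that the triangle $t$ and the by-triangle $t'$, which both live on the outside of $uu'$ and involve the exterior endpoints $a, a'$, can be drawn without crossing. Here I would use that the crossing $c$ of $e=ab$ with $uu'$ is chosen closest to $u$, and that the by-triangle is anchored at the crossing $c'$ of $ua'$ with $e'$, so the cyclic order of the relevant arcs around the region outside $uu'$ is forced and $t, t'$ can be nested rather than crossing; a careful case analysis on whether $a = a'$ (excluded in (5)) and on the side of $uu'$ the segments emanate from closes this. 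The remaining verifications — that no two charging triangles attached to different boundary edges of the \emph{same} configuration overlap, and that an edge of $E_4$ mandated by (3) does not coincide with or cross a pillar — are routine given that all these objects are thin neighborhoods of edge segments of the crossing-free-ish drawing $D_4$. Once local realizability in every region is established, extending to a global triangulation $\mathcal{H}$ is immediate, proving the claim.
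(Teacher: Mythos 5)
Your proposal follows essentially the same route as the paper's proof: both reduce the claim to checking that the edge sets $S_1,\dots,S_5$ are pairwise non-crossing, using that the configuration boundaries are planar, that edges of $E_4$ are pairwise non-crossing, and that the edges of the charging (by-)triangles closely follow pillars and boundary-edge segments, so that any crossing of such a new edge would force a crossing of the followed segments --- which is excluded. The laminar-region and region-by-region completion scaffolding you add is harmless but not needed. One correction: your remark that for property (3) it suffices to place a \emph{maximal planar subset} of the $E_4$-edges lying outside the configurations into $\mathcal{H}$ misreads the property, which demands \emph{every} such edge (and the discharging in \cref{pro:m_4andm_3} relies on each of them being an edge of two triangles of $\mathcal{H}$); your construction is saved only because edges of $E_4$ are pairwise non-crossing --- a consequence of the greedy deletion order, since after the first of two crossing $4$-crossing edges is removed the other has at most three crossings --- so the maximal planar subset is in fact all of $E_4$. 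You should invoke that fact directly rather than weaken the requirement.
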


\begin{proof}
Let $\mathcal{H}$ be a triangulation constructed with the properties (1)-(5).
We need to show that the edges of $S_1,...,S_5$ do not cross each other. The boundary edges of $F^3_6$ configurations in $D_3$ and of  $F^2_5$ and $F^2_6$ configurations in $D_2$ are planar and therefore edges of $S_1$ and $S_2$ do not cross each other. As edges of $E_4$ do not cross each other, edges of $S_3$ do not cross other edges of $S_3$ and by construction they do not cross edges of $S_1, S_2$.

The edges of $S_4$ consist of segments $(x,y)$ closely following pillars and segments of boundary edges. Note that every edge crossing $(x,y)$ also crosses the edge segment $(x,y)^*$ closely followed. Therefore, $(x,y)$ crossed by an edge of $S_1$ or $S_2$ would imply that either boundary edges cross each other or a pillar crosses a boundary edge, both contradictions. If $(x,y)$ is crossed by an edge of $S_3$, that would imply that edges of $E_4$ cross each other or an edge of $S_3$ crosses a boundary edge, both contradictions.
For the case that two edge $(x_1,y_1), (x_2,y_2)$ of $S_4$ cross each other, it is sufficient to observe that then $(x_1,y_1)^*, (x_2,y_2)^*$ cross and again boundary edges and pillars can not cross.

For the edges of $S_5$ we argue as for the edges of $S_4$. Here, the edges are following pillars or edge segments of $S_4$. Again, an edge segment $(x,y)$ crossed by an edge $e$ would imply that $(x,y)^*$ is crossed by $e$. Thus, an edge of $S_5$ crossed by an edge of $S_1,...,S_5$ would imply that edges of $S_1,...S_4$ cross each other or a pillar, which is a contradiction.

\end{proof}    

\begin{restatable}{proposition}{promfour}\label{pro:m_4andm_3}
Let $\mathcal{H}' \subseteq \mathcal{H}$ be the set of triangles that do not belong to the forbidden configurations and let $c_\triangle = \vert \mathcal{H}' \vert$. Then $m_4 \le m_3 + c_{hex} + 4 m_0 + 2 c_\triangle$.
\end{restatable}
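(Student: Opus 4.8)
The goal is to bound $m_4$, the number of edges deleted with exactly four crossings (the set $E_4$), in terms of quantities that we can control later: $m_3$, $m_{3-}$, $c_{hex}$, $m_0$, and the number $c_\triangle$ of "extra" triangles in the triangulation $\mathcal{H}$. The natural strategy is a charging argument: I would assign each edge $e \in E_4$ to some witness structure — a charging triangle $t$, a charging by-triangle $t'$, a missing boundary edge of $E_0$, an $F^2_6$ configuration, or an $F^3_6$ configuration with its missing edge — and then bound how many edges of $E_4$ can be charged to each witness. Concretely, I would first split $E_4$ according to whether an edge lies completely outside all forbidden configurations present in $D_4$, or crosses a boundary edge of some such configuration. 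The first kind, by property $(3)$ of $\mathcal{H}$, contributes an edge of $S_3 \subseteq \mathcal{H}'$, so these are absorbed into the $2c_\triangle$ term (with room to spare, since each such triangle is counted once for an edge but $\mathcal{H}'$ has extra slack). The second kind is the substantial case.

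For an edge $e = ab \in E_4$ crossing the boundary of a forbidden configuration $P$: since $e$ has only four crossings total and enters/leaves $P$, its interaction with $P$ is highly constrained. I would argue that $e$ must cross a boundary edge $uu'$ of $P$, and that the segment of $e$ outside $P$ together with parts of $uu'$ bounds exactly the charging triangle $t$ of property $(4)$ — so each such $e$ gets a charging triangle, and $(a,c)$ is its pillar. The key counting step is: \emph{how many edges of $E_4$ can share the same charging triangle or the same boundary edge $uu'$?} Here one uses that $e$ has at most four crossings, so inside $P$ it can only reach a bounded depth; combined with the precise structure of $F^3_6$, $F^2_5$, $F^2_6$ (only $2$-hop and a few $3$-hop edges inside a $C_6$, etc.), at most two edges of $E_4$ can cross a given $2$-triangle edge-segment $\tilde e_i$, which is exactly the scenario property $(5)$ handles via the charging by-triangle. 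So generically each $uu'$ accounts for at most two edges of $E_4$: one charging triangle $t$ and one charging by-triangle $t'$, both in $\mathcal{H}'$. The degenerate cases where $t$ or $t'$ would coincide with, or be contained in, a forbidden configuration (rather than being an "extra" triangle) are precisely where the $m_3 + c_{hex}$ terms enter: an $F^2_6$ in $D_2$ still has all six of its $2$-hop edges, so an edge of $E_4$ cutting through it is charged to that $F^2_6$ rather than to a new triangle; and an $F^3_6$ with its one missing edge (there are $m_3$ of these, plus the analogue from $m_{3-}$) likewise absorbs a bounded number of $E_4$-edges. The factor $4$ on $m_0$ comes from: around a missing boundary edge of $E_0$, the faces are merged, so up to four edges of $E_4$ (two per side, via the two $2$-triangle slots) can be charged there without producing a distinct triangle of $\mathcal{H}'$.

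Assembling these, every edge of $E_4$ is charged either to a distinct triangle of $\mathcal{H}'$ (contributing to $2c_\triangle$, the factor $2$ giving slack for triangles that serve as both an ordinary and a by-triangle witness), or to one of the $m_3$ $F^3_6$-with-missing-edge structures, or to one of the $c_{hex}$ full $2$-planar hexagons, or — at most $4$ times — to one of the $m_0$ missing boundary edges; hence $m_4 \le m_3 + c_{hex} + 4m_0 + 2c_\triangle$. The main obstacle, and where the bulk of the careful case analysis lives, is the second paragraph: proving that an edge of $E_4$ crossing into a forbidden configuration really does induce a charging (by-)triangle of the stated form, and — crucially — that no single witness is overloaded. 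This requires tracking, for each of $F^3_6$, $F^2_5$, $F^2_6$, exactly which boundary edges and interior $2$-triangles an edge with only four crossings can traverse, and checking that the triangles built in properties $(4)$ and $(5)$ are genuinely distinct for distinct witnessing edges (so that $\mathcal{H}'$ is not double-counted beyond the factor $2$ we have allowed). I would organize this by the position of the crossing $c$ of $e$ with the boundary and by how deep $e$ penetrates $P$, using \cref{pro:charging-helper}(d) repeatedly to force the vertices encountered on face boundaries to be connected by edges, thereby pinning down the faces of $M(D_4)$ that $e$ passes through.
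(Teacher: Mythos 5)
Your plan is essentially the paper's proof: the paper runs exactly this charging scheme (1 charge to each $F^2_6$ and $F^3_6$ configuration, 4 to each edge of $E_0$, 2 to each triangle of $\mathcal{H}'$) and discharges so that every edge of $E_4$ receives one unit, with the same case split according to whether the edge lies inside a forbidden configuration, runs between two of them, lies wholly outside, or straddles a boundary via a charging (by-)triangle. The one caveat is that what you defer as ``the bulk of the careful case analysis'' is the actual substance of the argument — in particular ruling out a triangle of $\mathcal{H}'$ with all three edges in $E_4$ and two neighbors outside $\mathcal{H}'$, and handling a boundary edge $uu'$ crossed by three or four edges of $E_4$ (not ``at most two'' as your generic count suggests), which is precisely where the by-triangle and, in one subcase, extra charge from a second boundary edge $vv'$ are needed.
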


The proof can be found in \cref{app:linear-bound}.
Combining the results, we can finish the first part of the proof. \cref{pro:optimal-faces} implies
\begin{align*}
	 c_\triangle \le 2(n-2) - 4m_3 - \left[\frac{2}{3}(n-2) - \frac{4}{3}m_3 - m_{3-} +m_0\right] \cdot 3 - c_{hex} =  3 m_{3-} - 3m_0 - c_{hex},
\end{align*}
because the total number of triangles is $2(n-2)$ and a pentagon resp. hexagon contains three resp. four triangles. Together with \cref{pro:m_4andm_3}, this gives
\begin{align*}
    m_4 \le m_3 + c_{hex} + 4 m_0 + 2 (3 m_{3-} - 3m_0 - c_{hex}) \le 2m_3 + 6m_{3-}.
\end{align*}
Multiplying this term by $\frac{1}{9}$ and adding it to \cref{eq:edges-splitted}, we get as desired
\begin{align*}
    \cross(G) &\ge 5m_{5+} + 4m_4 + \frac{13}{3}m_3 + \frac{2}{3}m_{3-} + \frac{10}{3}(n-2) + \frac{m_4-(2m_3 + 6m_{3-})}{9}\\
    &\ge \frac{37}{9}(m_{5+}+m_4+m_3) + \frac{10}{3}(n-2)\\
    &= \frac{37}{9}(m_{5+}+m_4+m_3+5(n-2)) - \frac{155}{9}(n-2).
\end{align*}

\medskip

For the bound in (b) see the following: If $m \le 6(n-2)$, then we can apply the bound of (a). So let be $m > 6(n-2)$. Iteratively delete the edge with the most crossings in a crossing-minimal drawing $D$ until $6(n-2)$ edges are left; these edges have at least five crossings, as the density of 4-planar graphs is $\le 6(n-2)$ \cite{DBLP:journals/comgeo/Ackerman19}. With the bound in (a), this~implies
\begin{align*}
    \cross(G) & \ge 5(m-6(n-2)) + \frac{37}{9} \cdot 6(n-2) - \frac{155}{9}(n-2) = 5m - \frac{203}{9}(n-2).
\end{align*}
\end{proof}

\section{Discussion}
We have improved the leading constant of the lower bound for the crossing number of a given graph $G$. Although this improvement does not seem to be too impressive at first sight, we worked out some interesting observations for drawings with a limited number of crossings~per edge.
This leads to further improvements, conjectures and suggestions for future research.

In particular, we have improved for $m > 5(n-2)$ the lower bound of the crossing number, unfortunately we did not reach tightness. We confirm the conjecture by 
\cite{DBLP:journals/dcg/PachRTT06} that $\cross(G) \ge \frac{25}{6}m-\frac{35}{2}(n-2)$ holds. The corresponding upper bound can be obtained by a construction where the plane subgraph consists only of pentagonal and hexagonal faces \cite{DBLP:journals/dcg/PachRTT06}.

Our improvement compared to the previous version \cite{LIPIcs:BuengenerKaufmann} comes from \cref{pro:m_4andm_3}, where we were able to replace $m_4 \le m_3 + c_{hex} + 4m_0 + 4 c_\triangle$ by $m_4 \le m_3 + c_{hex} + 4m_0 + 2 c_\triangle$. Although it seems not possible to get the bound $m_4 \le m_3 + c_{hex} + 4m_0 + \frac43 c_\triangle$, from which the conjecture would follow, we think that our general approach in the proof is convenient.

Applying our technique to 4-planar drawings might show that these drawings without full
hexagons $F^4_6$ have density $\le 5.5(n-2)$. This would provide a characterization of optimal 4-planar graphs, which is a well-known open problem.
Further, we can look at 5-planar graphs, a class that has been considered as too complex for actual research. Just applying \cref{cor:density-k-planar} improves the current known density bound from $8.52n$ to $8.3n$.

It seems to be worthwhile to apply the idea to bipartite graphs to obtain improvements of the Crossing Lemma. 
Here, the corresponding linear bound $\cross(G) \ge 3m - \frac{17}{2}n+19$ used in the current proof in \cite{DBLP:journals/corr/abs-1712-09855} is not tight.

Furthermore, we have indicated a way how to obtain the exact density bound of optimal simple 3-planar graphs. Note that we only did one step in this direction.

\bibliographystyle{plainurl}
\bibliography{biblio}

\newpage
\appendix

\section{Details for Section \ref{sec:charging}}\label{app:charging}

\prostepf*

\begin{proof}
    Note that in the calculations of \cref{pro:steps-e}, we assumed for all faces that are possibly a wedge-neighbor of a 0-pentagon except 0-triangles, 1-triangles, 1-quadrilaterals and 0-pentagons that they give at least $0.3$ charge to all wedge-neighbors.
    If such a face $f$ has a wedge-neighbor that is a 0-pentagon, then it did not contribute charge to it in \textsf{Step 1-4}, and therefore has $0.3$ charge left for it in \textsf{Step 5}. The only critical case is a 1-quadrilateral $f$ with a 0-pentagon and a 0-triangle $f'$ as wedge-neighbors.
    Observe that in this case $\ch_1(f') = 0$ already, because there is a 2-quadrilateral next to $f'$ as in \cref{fig:easy-faces-c}, and therefore $\ch_4(f) = 1$. Thus, $f$ can contribute $0.3$ charge to the 0-pentagon.
\end{proof}

\prostepg*

\begin{proof}
    Let $e$ be a 0-edge of a face $f$ incident to the crossings $x$ and $y$. Let $f_1$ be the vertex-neighbor of $f$ at $x$ and $f_2$ the vertex-neighbor at $y$. If $f_1$ and $f_2$ are 0-faces, then $\overline{e}$ has more than three crossings, a contradiction. Therefore, no face can contribute charge through two consecutive crossings on its boundary in \textsf{Step 6}.
    For a face $f$, this implies that it can contribute to at most $\left\lfloor \frac{\vert f \vert}{2}\right\rfloor$ vertex-neighbors in this step.
    
    Now we distinguish different cases for the face $f$ that might contribute to vertex-neighbors. We start with the case that $f$ is a 0-face. Here, after \textsf{Step 5}, $f$ has an excess of
    \begin{align*}
        \ch_5(f) &\ge \vert f \vert - 4 - \vert f \vert \cdot 0.3 = 0.7 \vert f \vert - 4,
    \end{align*}
    which is at least $0.4 \cdot \left\lfloor \frac{\vert f \vert}{2}\right\rfloor$ for $\vert f \vert \ge 8$ and therefore enough. If $f$ is a 0-heptagon, then, by the inequality above, there is enough charge if $f$ contributes to at most two vertex-neighbors in \textsf{Step 6}. So assume that it contributes to three vertex-neighbors. 
    \begin{figure}
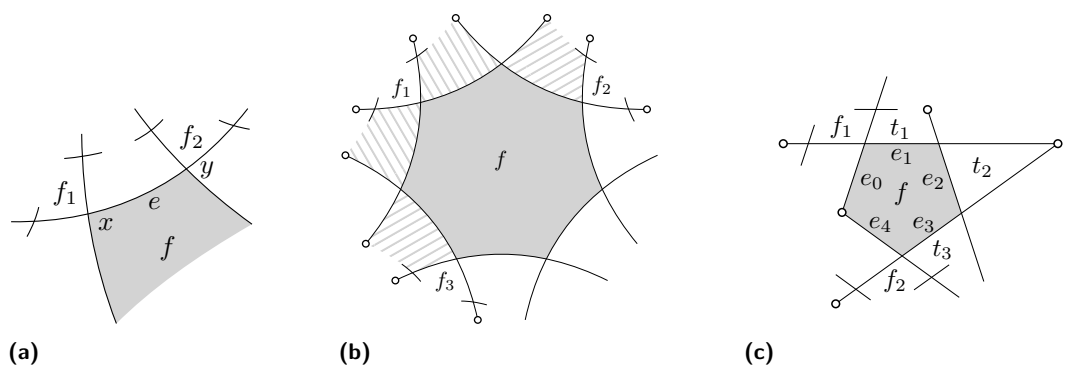

        \begin{subfigure}[b]{0.23\linewidth}
        \center
        \includegraphics[width=\textwidth, page=5]{3planarCases.pdf}
        \subcaption{}
        \label{fig:vertex-neighbors-a}
        \end{subfigure}  
        \hfill
        \begin{subfigure}[b]{0.3\linewidth}
        \center
        \includegraphics[width=\textwidth, page=6]{3planarCases.pdf}
        \subcaption{}
        \label{fig:vertex-neighbors-b}
        \end{subfigure}  
        \hfill
        \begin{subfigure}[b]{0.3\linewidth}
        \center
        \includegraphics[width=\textwidth, page=7]{3planarCases.pdf}
        \subcaption{}
        \label{fig:vertex-neighbors-c}
        \end{subfigure} 
        \hfill
        \caption{(a) No face contributes to two consecutive vertex-neighbors in \textsf{Step 6}. (b) If a 0-heptagon $f$ contributes to three vertex-neighbors $f_1, f_2, f_3$ in \textsf{Step 6}, then it contributes not to all its seven wedge-neighbors in \textsf{Step 1-5}. (c) A 1-pentagon $f$ contributing to all three wedge-neighbors in \textsf{Step 1-5} and to two vertex-neighbors in \textsf{Step 6} leads to a contradiction.}
        \label{fig:vertex-neighbors}
    \end{figure}
    It is not hard to see that there are wedge-neighbors of $f$ with $\vert f \vert \ge 4$ and $\vert V(f) \vert \ge 1$ (see \cref{fig:vertex-neighbors-a}), so $f$ contributes in \textsf{Step 1-5} to at most six faces. Now we have $\ch_5(f) \ge 3 - 6 \cdot 0.3 = 1.2$, which is sufficient to give three vertex-neighbors $0.4$ charge each in \textsf{Step 6}.
    
    Let $f$ now be a 1-face. Then $f$ has an excess of
    \begin{equation*}
        \ch_5(f)-0.4 \ge \vert f \vert + 1 - 4 - (\vert f \vert - 2) \cdot \frac{1}{3} -0.4 = \frac{2}{3} \vert f \vert -3.4 +\frac{2}{3},
    \end{equation*}
    which is at least $0.4 \cdot \left\lfloor \frac{\vert f \vert}{2}\right\rfloor$ if $\vert f \vert \ge 6$. If $f$ is a 1-pentagon, then, by the inequality above, there is enough charge if $f$ contributes to at most one vertex-neighbor in \textsf{Step 6}. So assume the opposite, i.e., $f$ contributes to two vertex-neighbors $f_1, f_2$ in \textsf{Step 6}.
    If $f$ contributes in \textsf{Step 1-5} to only one or two wedge-neighbors, then its excess after \textsf{Step 5} is at least $1.6-\frac{2}{3}$ charge and therefore sufficient, so assume this is not the case either.
    
    Walking along the boundary of $f$, let $e_0$ be a 1-edge of $f$, let $e_1, e_2, e_3$ be the 0-edges and let $e_4$ be the other 1-edge of $f$. Let further $t_i$ be the wedge-neighbor of $f$ at $e_i$ for $i \in \{1,2,3\}$.
    W.l.o.g. $f_1$ lies at the crossing of $e_0$, and therefore the face $t_2$ is a 1-triangle, as otherwise $f$ would not contribute charge to $t_2$ in \textsf{Step 1-5} (see \cref{fig:vertex-neighbors-b}). Therefore, $f_2$ lies at the crossing of $e_4$. Note that $\overline{e}_2$ ends at $t_1$ or $t_3$, say w.l.o.g. at $t_1$. But then $\vert t_1 \vert \ge 4$ and $\vert V(t_1) \vert \ge 1$, so $f$ does not contribute charge to $t_1$ in \textsf{Step 1-5}, a contradiction to our assumption.
    Therefore, 1-pentagons can contribute $0.4$ charge to the desired vertex-neighbors.
    
    The last case is that $f$ is a 2-face. Then $f$ has an excess of
    \begin{equation*}
        \ch_5(f)-0.8 \ge \vert f \vert + 2 - 4 - (\vert f \vert - 3) \cdot \frac{1}{3} -0.8 \ge \frac{2}{4} \vert f \vert -1.8,
    \end{equation*}
    which is at least $0.4 \cdot \left\lfloor \frac{\vert f \vert}{2}\right\rfloor$ for $\vert f \vert \ge 5$.
\end{proof}

\prosteph*

\begin{proof}
    We introduce a notation for the edges and faces at a 0-pentagon $f$. Let $e_i, i \in \{0,...,4\}$ be the edges forming the boundary of $f$, so that $e_i$ and $e_{(i+1\hspace{-0.15cm} \mod 5)}$ have a crossing at $f$. Further we denote by $t_i$ the wedge-neighbor of $f$ at $e_i$ and by $f_i$ the vertex-neighbor of $f$ at the crossing of $e_i$ and $e_{(i+1\hspace{-0.15cm} \mod 5)}$.
    
    Let $f$ be a 0-pentagon with four wedge-neighbors that are 1-triangles. So we have $\ch_4(f) \ge 1- 4 \cdot 0.4 = -0.2$. Let w.l.o.g. $t_0$ be the wedge-neighbor of $f$ that is not a 1-triangle. If $t_0$ is not a 0-triangle or 0-pentagon, then it contributes, by \cref{pro:steps-f}, $0.3$ charge to $f$ in \textsf{Step 5} and $f$ is satisfied.
    Otherwise, distinguish between the type of the face $t_0$.
    \begin{itemize}
        \item \emph{Case 1: $t_0$ is a 0-triangle.} 
        Observe that $\overline{e}_1$ and $\overline{e}_4$ already have three crossings and $\overline{e}_0$ two crossings. Therefore, $\overline{e}_0$ ends at $f_0$ or $f_4$, say w.l.o.g. $f_0$, so $f_0$ is a 2-quadrilateral (see \cref{fig:1-triangle-case-1-a}).
        \begin{figure}
        \begin{subfigure}[b]{0.32\linewidth}
        \center
        \includegraphics[width=\textwidth, page=8]{3planarCases.pdf}
        \subcaption{}
        \label{fig:1-triangle-case-1-a}
        \end{subfigure}  
        \hfill
        \begin{subfigure}[b]{0.32\linewidth}
        \center
        \includegraphics[width=\textwidth, page=9]{3planarCases.pdf}
        \subcaption{}
        \label{fig:1-triangle-case-1-b}
        \end{subfigure} 
        \hfill
        \begin{subfigure}[b]{0.32\linewidth}
        \center
        \includegraphics[width=\textwidth, page=10]{3planarCases.pdf}
        \subcaption{}
        \label{fig:1-triangle-case-1-c}
        \end{subfigure}
        \hfill
        \caption{Illustrations for the proof of \cref{pro:steps-h} Case 1 and 2.1.}
        \label{fig:1-triangle-case-1}
        \end{figure}
        Then $f_0$ has an excess of $0.2$ after \textsf{Step 5}, as it only contributes in \textsf{Step 1} charge.
        Note that the vertex-neighbors of $f_0$ are $f$ and $f_4$. Since $f_4$ is not a 0-face, $f_0$ contributes its excess of $0.2$ charge in \textsf{Step 6} only to $f$, and therefore $\ch_6(f) \ge 0$.
        
        \item \emph{Case 2: $t_0$ is a 0-pentagon.} 
        Again, $\overline{e}_1$ and $\overline{e}_4$ already have three crossings and $\overline{e}_0$ two crossings. Therefore, $f_2$ is a 2-triangle and also one of $f_1$ and $f_3$, say w.l.o.g. $f_1$ (see \cref{fig:1-triangle-case-1-b}). So we have $\ch_3(t_2) = -0.2$, and therefore $\ch_4(f) \ge 1- 3 \cdot 0.3 - 0.2 \ge -0.1$.
        
        Note that the only face besides $f$ that may receive charge from $t_0$ in \textsf{Step 5} is $f_4$. Therefore, we distinguish two cases:
        \begin{itemize}
            \item \emph{Case 2.1: $f_4$ is a 0-pentagon.} If less than three wedge-neighbors of $t_0$ are 1-triangles, then $\ch_4(t_0) \ge 0.4$ and $f$ receives enough charge in \textsf{Step 5}. If three wedge-neighbors of $t_0$ are 1-triangles, then the 1-triangle at the vertex at which $\overline{e}_1$ ends has $-0.2$ charge after \textsf{Step 3}, as it lies between two 2-triangles (see \cref{fig:1-triangle-case-1-c}). Therefore, we have $\ch_4(t_0) = 1-2\cdot 0.3 -0.2 = 0.2$ and $f$ can receive a half of it in \textsf{Step 5}, which is enough.
            
            \item \emph{Case 2.2: $f_4$ is not a 0-pentagon.} If $\ch_4(t_0) \ge 0.1$, then $f$ receives its missing charge already in \textsf{Step 5}. So assume the opposite, which implies that four wedge-neighbors of $t_0$ are 1-triangles (see \cref{fig:1-triangle-case-1.2.2-a}).
            
            If now $f_3$ is a 2-triangle, then $\ch_3(t_3)= -0.2$ and we have $\ch(f) = 1-  2 \cdot 0.3 - 2 \cdot 0.2 =0$ and $f$ never has a demand. Otherwise, there is an edge $\overline{e}$ crossing $\overline{e}_3$ at $f_3$ that has already three crossings, and therefore $f_3$ is either a 2-quadrilateral or a 1-triangle. In the first case, $f_3$ has an excess of at least $0.2$ and only contributes it to $f$ in \textsf{Step 6}. In the second case, we have a planar cycle of length seven, in which all faces except $f$ are satisfied after \textsf{Step 6}. Here, $f$ receives its demand in \textsf{Step 7} from a 2-quadrilateral that is a vertex-neighbor of $t_0$ (\cref{fig:1-triangle-case-1.2.2-c}). In all cases $f$ is satisfied after \textsf{Step 6}.
        \end{itemize}
    \begin{figure}
        \begin{subfigure}[b]{0.32\linewidth}
        \center
        \includegraphics[width=\textwidth, page=11]{3planarCases.pdf}
        \subcaption{}
        \label{fig:1-triangle-case-1.2.2-a}
        \end{subfigure}
        \hfill
        \begin{subfigure}[b]{0.32\linewidth}
        \center
        \includegraphics[width=\textwidth, page=12]{3planarCases.pdf}
        \subcaption{}
        \label{fig:1-triangle-case-1.2.2-b}
        \end{subfigure} 
        \hfill
        \begin{subfigure}[b]{0.32\linewidth}
        \center
        \includegraphics[width=\textwidth, page=13]{3planarCases.pdf}
        \subcaption{}
        \label{fig:1-triangle-case-1.2.2-c}
        \end{subfigure}
        \hfill
        \caption{Illustrations for the proof of \cref{pro:steps-h} Case 2.2.}
        \label{fig:1-triangle-case-1.2.2}
    \end{figure}
    \end{itemize}    
\end{proof}

\prostepi*

\begin{proof}
    We continue to use the notation introduced in the proof of \cref{pro:steps-h}.
    Let $f$ be a 0-pentagon with five wedge-neighbors that are 1-triangles. We distinguish the number of 0-neighbors of $f$ that are 0-quadrilaterals. Note that at most two such faces can exist next to a 0-pentagon.
    \begin{itemize}
        \item \emph{Case 1: No 0-neighbor of $f$ is a 0-quadrilateral.} Then all five vertex-neighbors are 2-triangles (see \cref{fig:1-triangle-case-2.1-2.2-a}) and we have $\ch'(f) = \ch_4(f) = 1- 0.5 \cdot 2 = 0$.
    \begin{figure}
        \begin{subfigure}[b]{0.3\linewidth}
        \center
        \includegraphics[width=\textwidth, page=14]{3planarCases.pdf}
        \subcaption{}
        \label{fig:1-triangle-case-2.1-2.2-a}
        \end{subfigure}  
        \hfill
        \begin{subfigure}[b]{0.3\linewidth}
        \center
        \includegraphics[width=\textwidth, page=15]{3planarCases.pdf}
        \subcaption{}
        \label{fig:1-triangle-case-2.1-2.2-b}
        \end{subfigure} 
        \hfill
        \begin{subfigure}[b]{0.3\linewidth}
        \center
        \includegraphics[width=\textwidth, page=16]{3planarCases.pdf}
        \subcaption{}
        \label{fig:1-triangle-case-2.1-2.2-c}
        \end{subfigure}
        \hfill
        \caption{Illustrations for the proof of \cref{pro:steps-i} Cases 1 and 2.}
        \label{fig:1-triangle-case-2.1-2.2}
    \end{figure}

        \item \emph{Case 2: Exactly one 0-neighbor of $f$ is a 0-quadrilateral.}
        Assume w.l.o.g. that this 0-quadrilateral lies at $e_0$. We consider the wedge-neighbors $t_2$ and $t_3$ of $f$ (see \cref{fig:1-triangle-case-2.1-2.2-b}). Observe that $\ch_3(t_2) = \ch_3(t_3) = -0.2$, and therefore $\ch_4(f) \ge 1 - 3 \cdot 0.3 - 2 \cdot 0.2 = -0.3$. 
        Note further that $f_0$ and $f_4$ cannot be 0-faces, and therefore, by \cref{pro:steps-g}, $f$ receives the missing charge in \textsf{Step 6} if $\vert f_0 \vert$ or $\vert f_4 \vert$ is at least five. The same holds if one of $f_0$ and $f_4$ is a 2-quadrilateral or both are 1-quadrilaterals, because in this case there is an excess of at least $0.3$ charge after \textsf{Step 5}, which is only contributed to $f$ (their other vertex-neighbor is $t_0$, which does not receive charge in \textsf{Step 5}, see \cref{fig:1-triangle-case-2.1-2.2-b}).
        
        Further $f_1$ and $f_2$ cannot be 2-triangles or 3-triangles. If both are 1-triangles, then there would be homotopic multi-edges, which is not allowed. So the last case to consider is when one of them -- w.l.o.g. $f_4$ -- is a 1-triangle and the other -- therefore $f_0$ -- is a 1-quadrilateral. If $f_4$ is the only wedge-neighbor, to which $f_0$ contributes in \textsf{Step 1-5}, then it contributes its excess of $0.3$ charge to $f$ in \textsf{Step 6} and $f$ is satisfied. Otherwise, the second wedge-neighbor of $f_0$ is also a 1-triangle and we have a planar cycle of length six (see \cref{fig:1-triangle-case-2.1-2.2-c}). Here, $f_0$ contributes $0.1$ charge to $f$ in \textsf{Step 6} and the 1-neighbor of $f_0$ that is a 2-triangle can contribute its excess of $0.2$ to $f$ in \textsf{Step 7}. Therefore, we have $\ch'(f) \ge 0$.

        \item \emph{Case 3: Exactly two 0-neighbors of $f$ are 0-quadrilaterals.} W.l.o.g. one 0-quadrilateral is at $e_0$. If the other 0-quadrilateral would be at $e_2$ (resp. $e_3$), then $e_1$ (resp. $e_4$) would have four crossings. Therefore, we can assume w.l.o.g. that the second 0-quadrilateral is at $e_4$. Here, we have $\ch_3(t_2) = -0.2$ as $f_1$ and $f_2$ are 2-triangles, thus $\ch_4(f) \ge 1- 4 \cdot 0.3 - 0.2 = -0.4$ (see \cref{fig:1-triangle-case-2.3-a}).
        
        We distinguish the type of the vertex-neighbor $f_4$. Note that $\vert f_4 \vert \ge 4$ and $f_4$ cannot be a 2-quadrilateral. If $f_4$ is not a 0-quadrilateral, 1-quadrilateral, 0-pentagon or 0-hexagon, then, by \cref{pro:steps-g}, $f_4$ contributes $0.4$ charge to $f$ in \textsf{Step 6}, and therefore $f$ is satisfied. The other cases are more complex, but they all have in common that if one of $f_0$ and $f_3$ is a 2-quadrilateral, then it has an excess of at least $2-2 \cdot 0.4 - 0.3 = 0.9$ charge after \textsf{Step 5} and this is enough to ensure $\ch_6(f) \ge 0$.       
    \begin{figure}
        \begin{subfigure}[b]{0.32\linewidth}
        \center
        \includegraphics[width=\textwidth, page=17]{3planarCases.pdf}
        \subcaption{}
        \label{fig:1-triangle-case-2.3-a}
        \end{subfigure}  
        \hfill
        \begin{subfigure}[b]{0.32\linewidth}
        \center
        \includegraphics[width=\textwidth, page=18]{3planarCases.pdf}
        \subcaption{}
        \label{fig:1-triangle-case-2.3-b}
        \end{subfigure} 
        \hfill
        \begin{subfigure}[b]{0.32\linewidth}
        \center
        \includegraphics[width=\textwidth, page=19]{3planarCases.pdf}
        \subcaption{}
        \label{fig:1-triangle-case-2.3-c}
        \end{subfigure}
        \hfill
        \caption{Illustrations for the proof of \cref{pro:steps-i} Case 3, 3.1 and 3.2.}
        \label{fig:1-triangle-case-2.3}
    \end{figure}
    \begin{itemize}
        \item \emph{Case 3.1: $f_4$ is a 0-quadrilateral.} Then the only case to consider is that $f_3$ and $f_4$ are 1-triangles. This directly implies a planar cycle of length seven (see \cref{fig:1-triangle-case-2.3-b}). Here, we make use of the second part of \textsf{Step 6} and have two 2-quadrilaterals contributing $0.9$ charge each to the 0-neighbors of $f$ at $e_0$ and $e_4$, which then is moved to $f$. Therefore, $f$ is satisfied after \textsf{Step 6}.
        
        \item \emph{Case 3.2: $f_4$ is a 1-quadrilateral.} Then $t_0$ and $t_4$ receive at least $0.2$ charge in \textsf{Step 3}, and therefore we have $\ch_4(f) \ge 1- 2 \cdot 0.3 - 3 \cdot 0.2 = -0.2$. If now $f_4$ contributes to less than two 1-triangles in \textsf{Step 4}, $f$ receives from $f_4$ enough charge in \textsf{Step 6}. Otherwise, $f_0$ and $f_3$ are 1-triangles, implying a planar cycle of length six (\cref{fig:1-triangle-case-2.3-c}). Here, $t_0$ and $t_4$ have two 1-neighbors that are 2-triangles and $\ch_3(t_0) = \ch_3(t_4) = -0.1$ holds. Therefore, $f$ contributes only $2 \cdot 0.1 + 0.2 + 2 \cdot 0.3 = 1$ charge and $f$ never has a demand.
        
        \item \emph{Case 3.3: $f_4$ is a 0-pentagon.}
        We introduce some new notation for $f_4$ and its wedge-neighbors, likewise for the 0-pentagon $f$ itself: Let $\tilde{f} := f_4$, $\tilde{e}_0$ the edge-segment of $\overline{e}_4$ at~$\tilde{f}$, $\tilde{e}_1$ the edge-segment of $\overline{e}_0$ at $\tilde{f}$ and so on (see \cref{fig:1-triangle-case-2.3.3-a}). Analogously, we denote by $\tilde{t}_i$ the wedge-neighbor of $\tilde{f}$ at $\tilde{e}_i$ and by $\tilde{f}_i$ the vertex-neighbor at the crossing of $\tilde{e}_i$ and $\tilde{e}_{(i + 1 \mod 5)}$. Note that $\tilde{t}_0 = f_0$ and $\tilde{t}_1 = f_3$ are 1-triangles or 2-quadrilaterals and, as pointed out above, we only have to consider the case that both are 1-triangles.
        
        Observe that $f$ is the only vertex-neighbor of $f_4$ that may receive charge from $f_4$ in \textsf{Step 6}, as all its other vertex-neighbors cannot be 0-faces.
        Distinguish the number of 1-triangles that are wedge-neighbors of $f_4$. Note that the wedge-neighbors of $f_4$ can never be 0-triangles or 0-pentagons, so, by \cref{pro:steps-f}, they contribute $0.3$ charge to $f_4$ if they are not 1-triangles. If three or less wedge-neighbors of $f_4$ are 1-triangles, then $\ch_5(f) \ge 1-3 \cdot 0.3 + 2 \cdot 0.3 \ge 0.7$, which then is contributed to $f$ in \textsf{Step 6} implying $\ch_6(f) \ge 0$. If all five wedge-neighbors of $f_4$ are 1-triangles, then we have the $F^3_6$ configuration, which is forbidden.
        So the case remains that four wedge-neighbors of $f_4$ are 1-triangles. Here, $\ch_5(f_4) \ge 1- 4 \cdot 0.3 + 0.3 = 0.1$ holds and this charge is contributed to $f$ in \textsf{Step 6}, so there is only $0.3$ charge missing for $f$.
                    
        By symmetry, $\tilde{t}_2$ is w.l.o.g. a 1-triangle.
        If $\tilde{t}_3$ is the wedge-neighbor of $\tilde{f}$ that is not a 1-triangle, then it must be 2-quadrilateral and this implies a planar cycle of length seven, in which $f$ is the only face with a demand after \textsf{Step 6} (see \cref{fig:1-triangle-case-2.3.3-b}). The 2-quadrilateral $\tilde{t}_3$ has an excess of $0.9$ charge after \textsf{Step 6} and contributes it in \textsf{Step 7} to $f$. Therefore, $f$ is satisfied.
        
        So assume now that $\tilde{t}_3$ is a 1-triangle and $\tilde{t}_4$ is the wedge-neighbor that is not a 1-triangle (see \cref{fig:1-triangle-case-2.3.3-c}).
        Note that $\tilde{t}_4$ is not a 0-face.
        So for all cases, except that $\tilde{t_4}$ is a 1-quadrilateral or 2-quadrilateral, \cref{pro:steps-g} guarantees that $\tilde{t_4}$ contributes in \textsf{Step 6} $0.4$ charge to all its vertex-neighbors. In particular, the 0-neighbor of $f$ at $e_0$ receives $0.4$ charge and gives it completely to $f$. Thus, in this case, $f$ is satisfied.
        
        If $\tilde{t_4}$ is a 2-quadrilateral, then we have $\ch_5(t_4) = 0.9$ and it contributes in the same way enough charge to $f$ via the 0-neighbor of $f$ at $e_0$. This works also if $\tilde{t_4}$ is a 1-quadrilateral contributing to only one wedge-neighbor (namely $\tilde{f}$) in \textsf{Step 1-5}.
        
        In the last case where $\tilde{t_4}$ is a 1-quadrilateral and contributes to $\tilde{f}$ and another wedge-neighbor in \textsf{Step 1-5}, this second wedge-neighbor is $\tilde{f}_3$ and must be a 1-triangle. This implies a planar cycle of length seven (see \cref{fig:1-triangle-case-2.3.3-d}). In this case, $\tilde{t_4}$ and its 1-neighbor that is a 2-triangle have an excess of $0.1$ resp. $0.2$ charge after \textsf{Step 5} and contribute it to $f$ in \textsf{Step 6} and \textsf{Step 7}. Therefore, $f$ is satisfied.
        \begin{figure}
        \begin{subfigure}[b]{0.24\linewidth}
        \center
        \includegraphics[width=\textwidth, page=20]{3planarCases.pdf}
        \subcaption{}
        \label{fig:1-triangle-case-2.3.3-a}
        \end{subfigure}  
        \hfill
        \begin{subfigure}[b]{0.24\linewidth}
        \center
        \includegraphics[width=\textwidth, page=21]{3planarCases.pdf}
        \subcaption{}
        \label{fig:1-triangle-case-2.3.3-b}
        \end{subfigure} 
        \hfill
        \begin{subfigure}[b]{0.24\linewidth}
        \center
        \includegraphics[width=\textwidth, page=22]{3planarCases.pdf}
        \subcaption{}
        \label{fig:1-triangle-case-2.3.3-c}
        \end{subfigure} 
        \hfill
        \begin{subfigure}[b]{0.24\linewidth}
        \center
        \includegraphics[width=\textwidth, page=23]{3planarCases.pdf}
        \subcaption{}
        \label{fig:1-triangle-case-2.3.3-d}
        \end{subfigure}
        \hfill
        \caption{Illustrations for Case 3.3 in the proof of \cref{pro:steps-i}.}
        \label{fig:1-triangle-case-2.3.3}
    \end{figure}
            
        \item \emph{Case 3.4: $f_4$ is a 0-hexagon.} Note that no wedge-neighbor of $f_4$ can be a 0-face, so $f_4$ contributes no charge in \textsf{Step 1-3} and \textsf{Step 5} (see \cref{fig:1-triangle-case-2.3.4-a}).  
        If at most four wedge-neighbors of $f_4$ are 1-triangles, then $\ch_5(f_4) \ge 2- 4 \cdot 0.3 = 0.8$ holds by \cref{pro:steps-f}. 
        In this case, there is at most one other vertex-neighbor of $f_4$ besides $f$ that can be a 0-quadrilateral or 0-pentagon and $f_4$ can contribute to both $0.4$ charge in \textsf{Step 6}. That is enough to satisfy $f$.

        If five wedge-neighbors of $f_4$ are 1-triangles, then no vertex-neighbor of $f_4$ except $f$ is a 0-face. Therefore, $f$ receives the excess of $f_4$ in \textsf{Step 6}, which is at least $2-5 \cdot 0.3 = 0.5$. So again $f$ is satisfied.

        Assume now that all six wedge-neighbors of $f_4$ are 1-triangles (see \cref{fig:1-triangle-case-2.3.4-b}). Then two of them have a demand of only $0.2$ after \textsf{Step 3} as they have two 1-neighbors that are 2-triangles. Therefore, $\ch_5(f_4) \ge 2 - 4 \cdot 0.3 - 2 \cdot 0.2 = 0.4$. Here, $f$ is the only face to which $f_4$ contributes in \textsf{Step 6} and we have $\ch_6(f) \ge 0$.
    \end{itemize}
    \begin{figure}
    \hfil
        \begin{subfigure}[b]{0.24\linewidth}
        \center
        \includegraphics[width=\textwidth, page=24]{3planarCases.pdf}
        \subcaption{}
        \label{fig:1-triangle-case-2.3.4-a}
        \end{subfigure}
        \hfil
        \begin{subfigure}[b]{0.24\linewidth}
        \center
        \includegraphics[width=\textwidth, page=25]{3planarCases.pdf}
        \subcaption{}
        \label{fig:1-triangle-case-2.3.4-b}
        \end{subfigure}
        \hfil
        \caption{Illustrations for Case 3.4 in the proof of \cref{pro:steps-i}.}
        \label{fig:1-triangle-case-2.3.4}
    \end{figure}
    \end{itemize}
\end{proof}

\section{Details for Section \ref{sec:linear-bounds}}\label{app:linear-bound}

\prooptface*

\begin{proof}
Insert the $m_0$ missing planar edges to $D_2$ at the boundaries of the forbidden configurations.
\begin{figure}
    \hfil
    \center
    \includegraphics[width=0.35\textwidth, page=2]{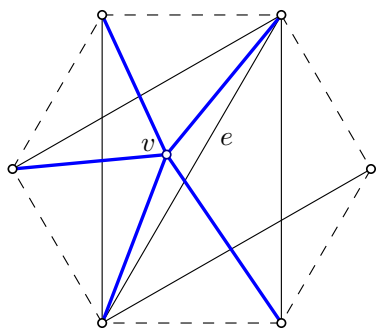} 
    \hfil
    \caption{Illustration for the proof of \cref{pro:optimal-faces}. We augment each $F^3_6$ configuration after the deletion of the three blue edges in \cref{fig:linear-bound} by one vertex and five edges for the drawing $\tilde{D}$.}
    \label{fig:linear-bound-2}
\end{figure}
Further add a vertex $v$ and five edges in every $F^3_6$ configuration from $D_3$ as shown in \cref{fig:linear-bound-2}. More precisely, notice that in $D_2$ three edges have been deleted from each $F^3_6$ configuration. Those three edges form a path consisting of a 2-hop edge, a 3-hop edge and a second 2-hop edge. Only one 3-hop edge $e$ still exists and it is crossing-free in $D_2$.
We arbitrarily choose a side of $e$ and place the new vertex $v$ close to $e$ at this side. We realize the five new edges by connecting $v$ to the two vertices of the configuration that are on same side of $e$, further to the two endpoints of $e$, and to one of the two endpoints on the opposite side of $e$.
We do not create new forbidden configurations by this operation, thus the number of $F^2_5$ and $F^2_6$ configurations in $D_2$ does not change.

As a next step, we remove one edge from each $F^2_5$ and $F^2_6$ configuration in $D_2$ and call this drawing $\tilde{D}$. Remark that $\tilde{D}$ is 2-planar, $F^2_5$-free, $F^2_6$-free and has 
$5(n-2) -2m_3 - m_{3-} + m_0 + 5m_3 - (c_{pent} + c_{hex})$
edges on $n + m_3 $ vertices.

Assume we have fewer $F^2_5$ and $F^2_6$ configurations in $D_2$ than stated in the proposition. Then $\tilde{D}$ would have more than
\begin{align*}
	5(n-2) -2m_3-m_{3-}+ m_0 + 5m_3 - \left[\frac{2}{3}(n-2) - \frac{4}{3}m_3 - m_{3-} +m_0\right]= \frac{13}{3} (n-2 + m_3)
\end{align*}
edges, which contradicts the statement of \cref{th:density-2-planar-forbidden-configuration}  for $\tilde{D}$.
\end{proof}

\promfour*
\begin{proof}
    To prove the result, we use the discharging technique. We assign in total $m_{3} +  c_{hex} + 4m_0 + 2c_\Delta$ charge to the network. In particular, 
    we  assign 1 charge to each $F^2_6$ and $F^3_6$ configuration, 4 charge to each edge in $E_0$ and 2 charge to every triangle in $\mathcal{H}'$. The proposition will follow from a redistribution of the charge in a way that each edge in $E_4$ has 1 charge and no edges, triangles or configurations have negative charge.
    We discuss the redistribution and the final charges by case analysis.
    For that, we use the fact that edges of $E_4$ do not cross each other and $D_4$ is 4-planar.
    \begin{figure}
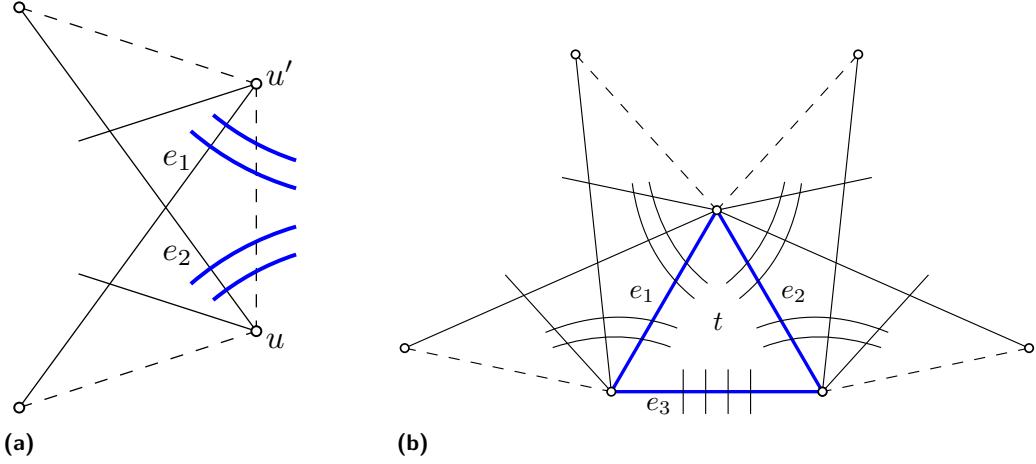

    \hfil
    \begin{subfigure}[b]{0.35\linewidth}
    \center
    \hfil
    \includegraphics[width=\textwidth, page=3]{linearBound.pdf}
    \subcaption{}
    \label{fig:linear-bound-charging-a}
    \end{subfigure}  
    \hfil
    \begin{subfigure}[b]{0.6\linewidth}
    \center
    \hfil
    \includegraphics[width=\textwidth, page=5]{linearBound.pdf}
    \subcaption{}
    \label{fig:linear-bound-charging-b}
    \end{subfigure}  
    \hfil
    \caption{(a) At most four edges of $E_4$ (blue) can leave a forbidden configuration $P$ through the same edge of its boundary, as otherwise one of the 2-hops $e_1,e_2$ of $P$ has~more than four crossings. (b) A triangle of $\mathcal{H}$ with all three edges in $E_4$ and two edges as boundary edges of forbidden configurations leads to a contradiction.}
    \label{fig:linear-bound-charging}
\end{figure}
\begin{itemize}
	\item[1.] \emph{$e \in E_4$ lies completely in one of the forbidden configurations.} This can only be the case in an $F^2_6$ or $F^3_6$ configuration as all five edges of an $F^2_5$ configuration still exist in $D_2$. 
    In each $F^2_6$ or $F^3_6$ configuration all 2-hops exist in $D_3$. 
    Therefore, $e$ is a 3-hop and crosses the other 3-hops inside the hexagon, which therefore cannot be in $E_4$.
    So $e$ is the only edge in $E_4$ inside the forbidden configuration and can receive 1 charge from it.
    \item[2.] \emph{$e \in E_4$ starts in a forbidden configuration $P$ and ends in another one, say $P'$.} Let $uu'$ be the edge on the boundary of $P$ that $e$ crosses. We will move 1 charge from $uu'$ to $e$ and argue that $uu' \in E_0$. Let $e_1,e_2$ and $e'_1,e'_2$ resp. be the 2-hop edges of $P$ and $P'$ that enclose the edge $uu'$ (see \cref{fig:linear-bound-charging-a}).
    Each of these four edges is crossed at least twice by edges belonging to the same forbidden configurations $P$ or $P'$.    
    Edge $e$ crosses at least two of those four edges. And since those edges must not be crossed more than four times, there are at most four edges of $E_4$
    that receive charge from the same boundary edge $uu'$. 
    We can also guarantee $uu' \in E_0$, as otherwise $e$ has at least five crossings (two each in the forbidden configurations and one with $uu'$).
	\item[3.] \emph{$e \in E_4$ is completely outside of any forbidden configuration.}
    By the properties of triangulation $\mathcal{H}$, $e$ is an edge of two neighboring triangles $t, t'\in \mathcal{H}$ and wlog $t\in \mathcal{H}'$. If $t'\notin \mathcal{H}'$, then we move 1 charge from $t$ to $e$. In the case $t'\in \mathcal{H}'$, $t$ and $t'$ both contribute $0.5$ charge to $e$ so that $e$ receives 1 charge in total. By this, the only critical case to consider where a triangle of $\mathcal{H}'$ may get negative charge is if all three edges $e_1,e_2,e_3$ of $t$ are in $E_4$ and two or three neighbors of $t$ are not in $\mathcal{H}'$.\\
    Assume wlog that the neighboring triangles of $t$ at $e_1$ and $e_2$ are not in $\mathcal{H}'$. Observe that in this case the eight edges crossing $e_1,e_2$ in $D_4$ must end at $t$ or leave $t$ through $e_3$ as otherwise they have more than four crossings (see \cref{fig:linear-bound-charging-b}). Thus at least four of these edges end at $t$ implying more than four crossings for the other edges. Therefore this case can not occur.

    \begin{figure}
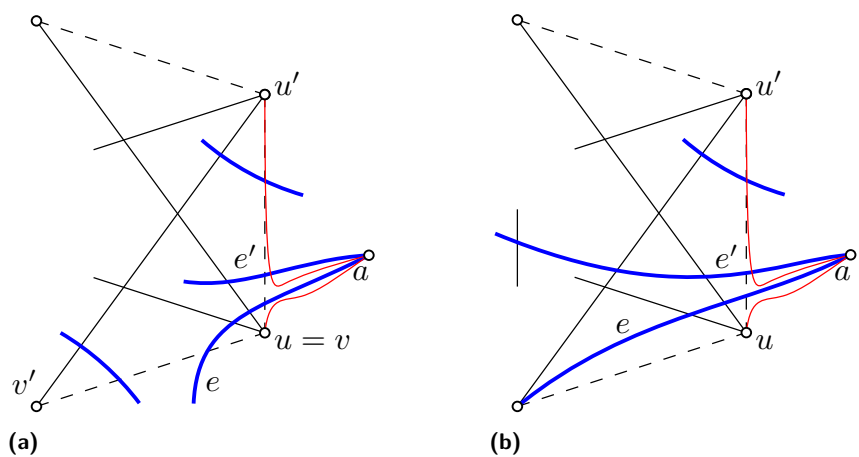

    \hfil   
    \begin{subfigure}[b]{0.35\linewidth}
    \center
    \hfil
    \includegraphics[width=\textwidth, page=8]{linearBound.pdf}
    \subcaption{}
    \label{fig:linear-bound-charging-case-4-a}
    \end{subfigure}
    \hfil
    \begin{subfigure}[b]{0.35\linewidth}
    \center
    \hfil
    \includegraphics[width=\textwidth, page=6]{linearBound.pdf}
    \subcaption{}
    \label{fig:linear-bound-charging-case-4-b}
    \end{subfigure}  
    \hfil    
    \caption{Illustrations for the subcase that $uu'$ is crossed three or four times by edges of $E_4$. (a) If $e$ crosses $uu'$ and $vv'$ then these edges are crossed by at most four distinct edges of $E_4$ in total. (b) $e'$ already has four crossings while $e$ has only three. Every additional edge crossing $e$ implies more crossings for $e'$.}
    \label{fig:linear-bound-charging-case-4}
    \end{figure}

	\item[4.] \emph{$e \in E_4$ lies partially in the faces of $\mathcal{H}'$ and a forbidden configuration.} This is the remaining case.
    Let $uu'$ be a boundary edge of the forbidden configuration $P$ that is crossed by edge $e \in E_4$. 
    If $uu'$ does not exist in $D_2$, then $e$ (and at most three other such edges) receive 1 charge each from $uu'$ (as in Case 2).\\
    Assume now, that $uu'$ exists in $D_2$. Recall that therefore there exists the charging triangle $t$ of $uu'$ with vertices $u,u',a$.
    Note that, by the choice of the triangulation $\mathcal{H}'$, Case~4 can only occur on one of the edges of the triangle $t$, here the edge $uu'$ (an edge in $E_4$ that enters $t$ through another edge and ends at $u$ or $u'$ would cross the pillar of $t$, contradiction). We distinguish three cases:
    \begin{itemize}
        \item \emph{$uu'$ is crossed by exactly one edge of $E_4$.} 
        If at most one edge of $t$ is in $E_4$, edge $e$ (and the other edge) can receive 1 charge from $t$. Assume now that the two edges $ua, u'a$ of $t$ are in $E_4$. As $e$ has at least two crossings at the forbidden configuration and one crossing with $uu'$, it has only one possible crossing inside $t$ left. Thus three edges crossing $ua$ and three edges crossing $u'a$ must leave triangle $t$ through $uu'$, a contradiction to the fact that $D_4$ is 4-planar.        
        \item \emph{$uu'$ is crossed by exactly two edges of $E_4$.} Let $e,e' \in E_4$ be the edges crossing $uu'$. If no edge of $t$ is in $E_4$, then $t$ has enough charge to contribute 1 charge to $e,e'$ each. Assume now the opposite, i.e., there is an edge $e_1 \in E_4$ of $t$. As in the case above, $e,e'$ can have only one crossing inside $t$ in $D_4$ and therefore three of the four edges crossing $e_1$ must leave $t$ through $uu'$. But this is a contradiction to the fact that $D_4$ is 4-planar.
        \item \emph{$uu'$ is crossed more often.} As before, we observe that there are at most four edges crossing $uu'$.
        Let $\tilde{e}_1$, $\tilde{e}_2$ be the edge-segments enclosing the 2-triangle next to $uu'$ in the forbidden configuration. Wlog $\tilde{e}_1$ is crossed by two edges $e,e' \in E_4$ and is incident to $u$. Let $a,a'$ be the endpoints of $e,e'$ that lie outside of the forbidden configuration.
        \begin{itemize}
            \item \emph{$a = a'$.} Assume first that $e$ also leaves the forbidden configuration through another boundary edge $vv'$. We have $u=v$, as otherwise $e$ is crossed more than four times. 
            Every edge of $E_4$ crossing $uu'$ or $vv'$ must cross one of the edges $u'v'$ or $\tilde{e}_1$. By 4-planarity, both these edges can have two crossings each additional to the crossings with other edges of the forbidden configuration. Thus, there are at most four edges in $E_4$ that cross $uu'$ or $vv'$ (see \cref{fig:linear-bound-charging-case-4-a}). The charging triangle $t$ can contribute 2 charge to these edges, so 2 charge more are required. We find that at $vv'$, if it does not exist in $D_2$, or otherwise at the charging triangle of $vv'$.
            
            Assume now that $e$ does not cross another boundary edge. Then it is impossible that $e$ and $e'$ both have exactly four crossings in $D_4$ while the drawing is still 4-planar (see \cref{fig:linear-bound-charging-case-4-b}).
            
            \item \emph{$a \ne a'$.} Recall that in this case there exists also the charging by-triangle $t'$ of $uu'$. No edge of $t,t'$ is in $E_4$, as again this would imply that $e,e'$ or $uu'$ has more than four crossings. Since $t,t'$ uniquely belong to $uu'$, they can contribute 1 charge each to $e$ and the at most three other edges in $E_4$ crossing $uu'$.
        \end{itemize}      
    \end{itemize}
  
\end{itemize}
\end{proof}
\end{document}